\documentclass[11pt]{article}

\usepackage[margin=1in]{geometry}
\usepackage[OT1]{fontenc}
\usepackage[latin1]{inputenc}
\usepackage[english]{babel}
\usepackage{amsmath,amssymb,amsfonts,amsthm,amsbsy,mathtools,bm}
\usepackage{amscd}
\usepackage{graphicx}
\usepackage{float}
\usepackage{subfig}
\usepackage{algorithm}
\usepackage{algpseudocode}
\usepackage[shortlabels]{enumitem}
\usepackage{indentfirst}
\usepackage{verbatim}
\usepackage{color}
\usepackage{hyperref}
\usepackage{matlab-prettifier}

\numberwithin{equation}{section}

\newtheorem{Theorem}{Theorem}
\newtheorem{Lemma}{Lemma}

\newtheorem{Proposition}{Proposition}

\newtheorem{Remark}{Remark}

\newtheorem*{Assumption*}{Assumption}

\newtheorem{Problem}{Problem}
\newtheorem*{Problem*}{Problem}

\begin{document}

\title{
Inverse initial data reconstruction for a memory convection--diffusion equation via Legendre spatial reduction and Tikhonov regularization
}

\author{
Cong B. Van
\thanks{Department of Mathematics and Statistics, University of North Carolina at
Charlotte, Charlotte, NC 28223, USA, \texttt{cvan1@charlotte.edu}, corresponding author}
\and
Thien P.B. Nguyen
\thanks{Department of Mathematics, International University - Vietnam National University, Quarter 33, Linh Xuan Ward, Ho Chi Minh City, Vietnam, \texttt{bthien624@gmail.com}}
\and
Minh-Binh Tran
\thanks{Department of Mathematics, Texas A\&M University, College Station, TX 77843, USA, \texttt{minhbinh@tamu.edu}}
\and
Loc H. Nguyen
\thanks{Department of Mathematics and Statistics, University of North Carolina at
Charlotte, Charlotte, NC 28223, USA, \texttt{loc.nguyen@charlotte.edu}}
}

\date{}

\maketitle


\begin{abstract}
We study an inverse initial data problem for a convection--diffusion equation with memory, where the goal is to recover the unknown initial condition from final-time data. The model includes convection, an instantaneous Laplacian term, and a nonlocal-in-time memory term involving the Laplacian of the past states, which leads to a severely ill-posed backward problem. We prove uniqueness in a spatially independent coefficient setting by applying the Fourier transform and using an analyticity argument for a scalar Volterra equation. For the variable-coefficient case, we develop a computational method based on Legendre spatial dimensional reduction and Tikhonov regularization. The solution is approximated by a finite tensor-product Legendre expansion, thereby reducing the inverse problem to a finite-dimensional terminal-value system for the time-dependent coefficients. We solve the reduced problem by a Tikhonov-regularized least-squares method with an $H^2$ penalty. For a fixed truncation order, we prove that the regularized minimizers converge to the finite-dimensional minimum-norm solution as the noise level and the regularization parameter vanish, under a suitable choice of the regularization parameter.  
Some two-dimensional numerical examples are presented to illustrate the performance of the proposed method.

\end{abstract}



\noindent{\it Key words: 
} inverse initial data problem, backward parabolic equation, memory term, Legendre reduction, Tikhonov regularization, ill-posed problem

\noindent{\it AMS subject classification: 	
} 35R30, 35K20, 45K05, 65M32, 65M70

\section{Introduction}

Let $d\geq 1$ be the spatial dimension and let $T>0$ be the final time. 
We consider the convection--diffusion equation with memory
\begin{equation}\label{eq:linear-convection-diffusion-memory}
    \begin{cases}
        u_t(\mathbf{x},t)
        + \mathbf{b}(\mathbf{x},t)\cdot \nabla u(\mathbf{x},t)
        =
        a(\mathbf{x},t)\Delta u(\mathbf{x},t)
        \\
        \hspace{3.5cm}
        +
        \displaystyle\int_0^t
        \alpha(\mathbf{x},t-\tau)\Delta u(\mathbf{x},\tau)\,d\tau,
        &
        (\mathbf{x},t)\in \mathbb{R}^d\times (0,T),
        \\[0.25cm]
        u(\mathbf{x},0)=u_0(\mathbf{x}),
        &
        \mathbf{x}\in \mathbb{R}^d.
    \end{cases}
\end{equation}
Here $u:\mathbb{R}^d\times[0,T]\to\mathbb{R}$ is the state variable, 
$\mathbf b:\mathbb{R}^d\times[0,T]\to\mathbb{R}^d$ is the convection field, 
$a:\mathbb{R}^d\times[0,T]\to [0, \infty)$ is the coefficient of the instantaneous Laplacian term, and 
$\alpha:\mathbb{R}^d\times[0,T]\to[0, \infty)$ is the memory kernel. 
The memory term represents the accumulated influence of previous diffusion states on the present evolution. 
Such terms arise naturally in models where the response is not instantaneous, for example in materials with relaxation effects, trapping mechanisms, heterogeneous microstructures, or nonlocal transport behavior.

We study the following inverse initial data problem.

\begin{Problem}[Backward inverse initial data problem]\label{prob:backward-inverse-initial}
Let $u$ solve \eqref{eq:linear-convection-diffusion-memory}. 
Given the final-time data
\[
    u_T(\mathbf{x})=u(\mathbf{x},T),
    \qquad \mathbf{x}\in\mathbb{R}^d,
\]
reconstruct the unknown initial profile $u_0(\mathbf{x})$.
\end{Problem}

Backward parabolic problems and final-time inverse problems are classical examples of ill-posed problems in the sense of Hadamard. 
Small perturbations in the final-time data may be strongly amplified when propagated backward in time, especially in high-frequency modes. 
This can already be seen in the simplest case $a=1$, $\alpha=0$, $\mathbf b=0$, and $d=1$, where \eqref{eq:linear-convection-diffusion-memory} reduces to the heat equation
\[
    u_t=u_{xx},
    \qquad x\in\mathbb{R},\quad t\in(0,T).
\]
Let $u_T^*$ be the exact final-time data and perturb it by
\[
    u_T^{(n)}(x)
    =
    u_T^*(x)
    +
    n e^{-n^2T}\sin(nx).
\]
Although the perturbation amplitude $n e^{-n^2T}$ tends to zero as $n\to\infty$, the corresponding reconstructed initial condition is
\[
    u_0^{(n)}(x)
    =
    u_0^*(x)+n\sin(nx).
\]
Thus an arbitrarily small perturbation in the final-time data may produce a large error in the reconstructed initial condition. 
This instability has been studied extensively in the classical literature on improperly posed problems, quasi-reversibility, and final value problems for evolution equations; see, for example,
\cite{EwingSIAMMA1975,LattesLionsBook1967,LavrentievRomanovShishatskiiAMS1986,MillerLNM1973,PayneSIAM1975,ShowalterJMAA1974}.
General references on inverse problems for partial differential equations and inverse problems in mathematical physics include
\cite{IsakovSpringer2006,PrilepkoOrlovskyVasinBook2000}.
Because of this instability, direct backward time integration is not suitable, and some form of regularization is necessary for stable reconstruction.

A large body of work has been devoted to the regularized reconstruction of initial data for parabolic equations. 
Classical approaches include quasi-reversibility, quasi-boundary value methods, Tikhonov regularization, spectral cut-off, kernel-based regularization, and optimization-based methods; see, for example,
\cite{AmesClarkEppersonOppenheimerMathComp1998,AmesEppersonSINUM1997,CarassoSIAMJAM1982,FuJCAM2004,HansenOLearySISC1993,HaoDucLesnicIMA2010,QuanTrietTrongIJMMS2012,TikhonovArseninBook1977,TrongTuanEJDE2006,QuanTrongTrietTuanIPSE2011,ChengFuActaMathSin2010,HouYangLiTianNumerAlgorithms2026}. 
Several numerical methods have also been developed for reconstructing the initial temperature or initial state from final-time measurements, including boundary element methods, methods of fundamental solutions, finite difference methods, finite element methods, conjugate-gradient methods, and space-time methods; see
\cite{ChenLiuJCAM2006,HanInghamYuanJCP1995,HasanovMuellerANM2001,KolevaVulkovComputation2023,LangerSteinbachTroeltzschYang2023,MeraIPSE2005,SamarskiiVabishchevichVasilievMathModel1997,SuHuangVasilievLiKardashevskyJCAM2022}. 
Backward and inverse problems for degenerate or variable-coefficient parabolic equations are also relevant to the present setting, especially when the diffusion coefficient is allowed to vanish or vary in space and time; see
\cite{AtifiEssoufiKhouitiOpusculaMath2020,YangDengNMPDE2017}. 
More recently, Carleman-estimate-based regularization, quasi-reversibility, convexification, and globally convergent methods have been developed for inverse and ill-posed parabolic problems, especially for reversed-time, nonlinear, or quasilinear equations; see, for instance,
\cite{Klibanov:anm2015,KlibanovYagolaInverseProblems2019,LeCON2023,LeNguyen:jiip2022,LiNguyenIPSE2019}.

Inverse problems for parabolic equations with memory have also attracted considerable attention. 
In such models, the present state depends not only on the instantaneous diffusion but also on the past history of the solution through a convolution-type memory kernel. 
Related works include inverse problems for determining memory kernels in parabolic integro-differential equations, heat equations with memory, and nonlocal transport or diffusion models; see
\cite{DurdievDifferentialEquations2014,DurdievZhumaevEJMCA2023,DurdievZhumaevMMAS2022,DurdievZhumaevUMJ2022,EfendievLeungLiPunVabishchevichJCP2023, WuGaoYanWuWangJDCS2018,JannoLorenziJIIP2008}. 
Backward problems for parabolic equations with memory have been studied by spectral regularization and related techniques; see
\cite{LuanKhanhApplicableAnalysis2021}. 
Fractional parabolic and diffusion-wave equations are also related to this direction, since fractional time derivatives can be interpreted as memory effects; see
\cite{HaoDucThangThanhJIIP2020,SuHuangVasilievLiKardashevskyJCAM2022,ZhangZhouSISC2022}.

The inverse problem considered in this paper has additional difficulties beyond the classical backward heat equation. 
First, the memory term makes the inverse problem more delicate. 
The evolution at time $t$ depends on the whole past history through
\[
    \int_0^t
    \alpha(\mathbf{x},t-\tau)\Delta u(\mathbf{x},\tau)\,d\tau.
\]
Thus, the dynamics are not governed solely by a local-in-time parabolic operator. 
In particular, when the coefficients depend on both space and time, a complete uniqueness and stability theory for recovering $u_0$ from $u_T$ is difficult and, to the best of our knowledge, is not available in the generality considered in the computational part of this paper.

Second, our computational framework does not require the coefficient $a(\mathbf{x},t)$ to be uniformly positive. 
In the standard uniformly parabolic setting, one assumes, for example,
\[
    a(\mathbf{x},t)\geq a_0>0.
\]
Here, in the computational part, $a$ is treated as a general known variable coefficient and may vanish in parts of the computational domain. 
Accordingly, the reduced finite-dimensional system should be viewed as a computational reconstruction model for a general linear integro-differential equation, rather than as a uniformly parabolic forward model.

Dimensional reduction has also been used as an effective tool in inverse and ill-posed problems. 
The main idea is to project the unknown function or the governing equation onto a finite-dimensional space, thereby reducing a high-dimensional problem to a more manageable system while retaining the dominant features of the solution. 
A related direction was developed in \cite{TrongElastic}, where a Legendre-polynomial exponential basis was introduced for inverse initial data problems. 
This type of basis has been successfully applied to several inverse problems for partial differential equations; see, for example,
\cite{LeVanDangNguyen, le2026globally,  neupane2026inverse, NguyenNguyenNguyen, nguyen2026carleman, VanLeNguyen}. 

Motivated by these works, we use a tensor-product Legendre basis in the spatial variables on the bounded computational domain $\Omega_R$. 
The solution is approximated by a finite Legendre expansion, which transforms the inverse initial data problem into a finite-dimensional terminal-value system for the time-dependent expansion coefficients. 
Because the original equation contains a memory term, the reduced system is an integro-differential system rather than a purely local-in-time system of ordinary differential equations. 
We then solve the reduced problem by a Tikhonov-regularized least-squares method with an $H^2$ penalty. 
The Legendre truncation also acts as a spectral filtering mechanism: by keeping only finitely many spatial modes, the method removes high-frequency components of the noisy terminal data. 
This is consistent with the Hadamard instability mechanism described above, where the strongest instability is associated with high-frequency perturbations.

The contributions of this paper are as follows.
\begin{itemize}
    \item We formulate an inverse initial data problem for a convection--diffusion equation with a memory term involving the Laplacian of the past states.
    
    \item We prove a uniqueness result in a simplified spatially independent setting. 
    This provides theoretical support for the inverse problem while keeping the proof accessible. 
    The fully variable-coefficient case is substantially more delicate and is treated computationally in this paper.
    
    \item We develop a Legendre spatial dimensional reduction method that converts the original problem into a finite-dimensional system of integro-differential equations for the expansion coefficients.
    
    \item We formulate a Tikhonov reconstruction method for the reduced system and prove convergence to the minimum-norm solution of the finite-dimensional exact problem as the noise level and regularization parameter vanish.
    
    \item We present two-dimensional numerical examples showing that the method can recover discontinuous initial profiles from noisy final-time data, including examples with convection, memory effects, and coefficients $a$ that are not uniformly positive.
\end{itemize}

The paper is organized as follows. 
Section \ref{sec:uniqueness} is devoted to the theoretical uniqueness result. 
There, we consider a simplified setting in which the coefficients are independent of the spatial variable and prove that the final-time data uniquely determine the initial data. 
Section \ref{sec2} returns to the variable-coefficient case and introduces the Legendre spatial dimensional reduction method. 
This section derives the finite-dimensional terminal value system for the Fourier--Legendre coefficients. 
Section \ref{sec:tikhonov-convergence} presents the Tikhonov reconstruction method for the reduced system and proves convergence of the regularized minimizers to the finite-dimensional minimum-norm solution. 
Section \ref{sec:numerics} describes the numerical implementation and presents two-dimensional reconstruction results. 
Finally, Section 6 gives concluding remarks.

\section{Uniqueness of the inverse initial data problem}
\label{sec:uniqueness}

We prove a uniqueness result in a setting where the coefficients are independent of the spatial variable, namely
\begin{equation}\label{eq:spatially-independent-coefficients}
    a=a(t), \qquad 
    \mathbf{b}=\mathbf{b}(t), \qquad \alpha=\alpha(t),
\end{equation}
for $t\in(0,T)$. 
This assumption decouples the equation in Fourier space and leads to a scalar Volterra equation for each frequency. 
The result below provides a theoretical justification in a tractable case; the numerical method in later sections is formulated for variable coefficients.

\begin{Remark}
The restriction in the uniqueness theorem is mainly due to the spatially dependent coefficients and the second-order memory term. 
When the coefficients depend on $\mathbf{x}$, the Fourier modes are coupled, and the term involving $\Delta u$ under the memory integral has the same spatial order as the principal diffusion term. 
A complete uniqueness and stability theory for the fully variable-coefficient case is beyond the scope of this paper.
\end{Remark}

\begin{Theorem}[Uniqueness in the spatially independent case]\label{thm:uniqueness-time-dependent}
Assume that
\[ a\in L^\infty(0,T), \qquad \mathbf{b}\in L^\infty(0,T;\mathbb{R}^d), \qquad \alpha\in L^1(0,T). \]
Consider the memory convection--diffusion equation
\begin{equation}\label{eq:memory-time-dependent}
    u_t(\mathbf{x},t)
    +
    \mathbf{b}(t)\cdot \nabla u(\mathbf{x},t)
    =
    a(t)\Delta u(\mathbf{x},t)
    +
    \int_0^t \alpha(t-\tau)\Delta u(\mathbf{x},\tau)\,d\tau,
    \quad
    (\mathbf{x},t)\in \mathbb{R}^d\times(0,T).
\end{equation}
Let $u^{(1)}$ and $u^{(2)}$ be two solutions of \eqref{eq:memory-time-dependent} satisfying \[ u^{(j)} \in W^{1,1}(0,T;L^2(\mathbb{R}^d)) \cap L^1(0,T;H^2(\mathbb{R}^d)), \qquad j=1,2. \]
If
\begin{equation}\label{eq:same-terminal-data}
    u^{(1)}(\cdot,T)=u^{(2)}(\cdot,T)
    \quad \mbox{in } L^2(\mathbb{R}^d),
\end{equation}
then
\[
    u^{(1)}(\cdot,0)=u^{(2)}(\cdot,0)
    \quad \mbox{in } L^2(\mathbb{R}^d).
\]
Consequently, the terminal data $u(\cdot,T)$ uniquely determine the initial data $u(\cdot,0)$ in this setting.
\end{Theorem}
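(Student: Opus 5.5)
By linearity, set $w=u^{(1)}-u^{(2)}$. Then $w$ solves \eqref{eq:memory-time-dependent}, has the same regularity, and satisfies $w(\cdot,T)=0$; the goal is $w(\cdot,0)=0$. I will take the spatial Fourier transform $\hat w(\xi,t)$. Since $w\in W^{1,1}(0,T;L^2)\cap L^1(0,T;H^2)$, Plancherel and Fubini show that for a.e.\ $\xi\in\mathbb{R}^d$ the function $y_\xi(t):=\hat w(\xi,t)$ lies in $W^{1,1}(0,T)$, hence is absolutely continuous, and satisfies the scalar Volterra integro-differential equation
\[
y_\xi'(t)=-\bigl(i\,\mathbf b(t)\cdot\xi+a(t)|\xi|^2\bigr)y_\xi(t)-|\xi|^2\int_0^t\alpha(t-\tau)y_\xi(\tau)\,d\tau ,\qquad y_\xi(T)=0 .
\]
The endpoint values make sense because $W^{1,1}(0,T;L^2)\subset C([0,T];L^2)$ and evaluation at $t=0,T$ commutes with the Fourier transform. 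It therefore suffices to show $y_\xi(0)=0$ for a.e.\ $\xi$.

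Fix $\xi$ and regard the equation as linear in the initial value. Let $z(t;\xi)$ be the solution with $z(0)=1$. The coefficient $-(i\mathbf b\cdot\xi+a|\xi|^2)$ is in $L^\infty$ and the kernel $-|\xi|^2\alpha$ is in $L^1$, so a Picard iteration in $C[0,T]$, with a Gronwall-type bound, gives a unique absolutely continuous solution. Uniqueness yields $y_\xi(t)=y_\xi(0)z(t;\xi)$, and then $0=y_\xi(T)=y_\xi(0)z(T;\xi)$. So it is enough to show that the zero set $Z=\{\xi: z(T;\xi)=0\}$ has Lebesgue measure zero.

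For this I will use analyticity. Replace $\xi$ by $\zeta\in\mathbb{C}^d$, reading $|\xi|^2$ as $\zeta\cdot\zeta$. The Picard iterates are polynomials in $\zeta$, and the series converges uniformly on compact subsets of $\mathbb{C}^d$, since the Gronwall bound is locally uniform in $\zeta$. Hence $F(\zeta)=z(T;\zeta)$ is entire on $\mathbb{C}^d$, and its restriction to $\mathbb{R}^d$ is real-analytic. The zero set of a real-analytic function on $\mathbb{R}^d$ that is not identically zero has measure zero, and $F(0)=1$, because at $\xi=0$ the equation reduces to $z'=0$. Therefore $F\not\equiv0$, $|Z|=0$, and $y_\xi(0)=0$ for a.e.\ $\xi$. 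Plancherel then gives $w(\cdot,0)=0$ in $L^2(\mathbb{R}^d)$.

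The main obstacle is the analytic dependence on $\zeta$ when the coefficients are only $L^\infty$ or $L^1$ in time. I will handle it directly. Writing the solution as $z=\sum_n z_n$ with $z_0=1$ and $z_{n+1}(t)=\int_0^t[c(s,\zeta)z_n(s)+k(\zeta)(\alpha * z_n)(s)]\,ds$, the standard estimate $|z_n(t)|\le (M(\zeta)\,t)^n/n!$ holds, where $M(\zeta)=\|\mathbf b\|_\infty|\zeta|+\|a\|_\infty|\zeta|^2(1+\|\alpha\|_{L^1})$ up to constants. This bound uses $|(\alpha*g)(s)|\le\|\alpha\|_{L^1}\sup|g|$, with the running supremum being monotone in $s$ so that the factorial still appears. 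Locally uniform convergence of polynomials then gives holomorphy. A secondary technical point is justifying a.e.\ $\xi$ membership in $W^{1,1}(0,T)$. This follows because $\int\!\int_0^T|\partial_t\hat w|\,dt\,d\xi$ need not be finite, but $\int_0^T\|\partial_t\hat w\|_{L^2_\xi}dt<\infty$ implies $\int_0^T|\partial_t\hat w(\xi,t)|dt<\infty$ for a.e.\ $\xi$ by Minkowski applied on bounded sets of $\xi$.
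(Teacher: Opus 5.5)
Your proposal is correct and follows essentially the same argument as the paper: you take the Fourier transform to get a scalar Volterra equation per frequency, factor $\widehat w(\boldsymbol{\xi},t)=q_{\boldsymbol{\xi}}(t)\widehat w(\boldsymbol{\xi},0)$ by uniqueness, show the Picard limit is holomorphic in $\mathbf z\in\mathbb{C}^d$, and use $q_{\mathbf 0}\equiv 1$ to force a null zero set. The differences are only technical: you keep the $L^1$ convolution and use a running-supremum bound, whereas the paper integrates once to obtain the bounded kernel $-c_{\boldsymbol{\xi}}(s)-|\boldsymbol{\xi}|^2A(t-s)$, and you justify the a.e.-$\boldsymbol{\xi}$ reduction more carefully than the paper does (one small slip: your $M(\zeta)$ should read $\|\mathbf b\|_\infty|\zeta|+(\|a\|_\infty+\|\alpha\|_{L^1})|\zeta|^2$, since your version drops the memory contribution when $a\equiv 0$).
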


We first establish two auxiliary lemmas for the scalar Volterra equations that will arise after applying the Fourier transform in the proof of Theorem \ref{thm:uniqueness-time-dependent}. 
The first lemma gives the well-posedness of the scalar Volterra initial value problem for each fixed frequency $\boldsymbol{\xi}$. 
The second lemma establishes the analytic dependence of the associated Fourier multiplier on the frequency variable. 
These two facts will be used later to prove that the terminal data uniquely determine the initial data.

\begin{Lemma}\label{lem:volterra-multiplier}
Under the assumptions of Theorem \ref{thm:uniqueness-time-dependent}, for each 
$\boldsymbol{\xi}\in\mathbb{R}^d$ and each $q_{\boldsymbol{\xi}}^0\in\mathbb{C}$, the scalar Volterra initial value problem
\begin{equation}\label{eq:q-xi-general}
    q_{\boldsymbol{\xi}}'(t)
    =
    -
    \Big(
        a(t)|\boldsymbol{\xi}|^2
        +
        i\,\mathbf{b}(t)\cdot\boldsymbol{\xi}
    \Big)
    q_{\boldsymbol{\xi}}(t)
    -
    |\boldsymbol{\xi}|^2
    \int_0^t
    \alpha(t-\tau)q_{\boldsymbol{\xi}}(\tau)\,d\tau,
    \qquad 
    q_{\boldsymbol{\xi}}(0)=q_{\boldsymbol{\xi}}^0,
\end{equation}
is uniquely solvable on $[0,T]$.
\end{Lemma}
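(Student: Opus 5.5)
We will recast the problem as an equivalent fixed-point equation and apply a contraction argument in a weighted sup-norm. Fix $\boldsymbol{\xi}$ and write $k(t)=a(t)|\boldsymbol{\xi}|^2+i\,\mathbf{b}(t)\cdot\boldsymbol{\xi}$. Then $k\in L^\infty(0,T;\mathbb{C})$ and $\|k\|_\infty\le \|a\|_\infty|\boldsymbol{\xi}|^2+\|\mathbf{b}\|_\infty|\boldsymbol{\xi}|$. We seek solutions in the class of absolutely continuous functions $q\in W^{1,1}(0,T;\mathbb{C})$ with \eqref{eq:q-xi-general} holding a.e. Integrating in time, such a $q$ is a solution if and only if $q\in C([0,T];\mathbb{C})$ satisfies
\begin{equation*}
    q(t)=q_{\boldsymbol{\xi}}^0-\int_0^t k(s)q(s)\,ds-|\boldsymbol{\xi}|^2\int_0^t\int_0^s \alpha(s-\tau)q(\tau)\,d\tau\,ds=: (\mathcal{K}q)(t).
\end{equation*}
For the converse direction we need two facts. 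First, the inner convolution $(\alpha*q)(s)$ is well defined and belongs to $L^1(0,T)$, and in fact to $L^\infty(0,T)$ with $|(\alpha*q)(s)|\le\|\alpha\|_{L^1}\|q\|_\infty$, by Young's inequality since $\alpha\in L^1$ and $q$ is continuous. Second, $kq\in L^\infty$. Together these show that $\mathcal{K}q$ is absolutely continuous, and the Lebesgue differentiation theorem then recovers \eqref{eq:q-xi-general} a.e. together with $q(0)=q_{\boldsymbol{\xi}}^0$.

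Next we show that $\mathcal{K}$ is a contraction on $C([0,T];\mathbb{C})$ for a suitable norm. Equip the space with the Bielecki norm $\|q\|_\lambda=\sup_{t}e^{-\lambda t}|q(t)|$, which is equivalent to the sup norm. For $q_1,q_2$ with $w=q_1-q_2$ we estimate the two terms separately. The local term gives
\[
\Big|\int_0^t k w\Big|\le \|k\|_\infty\|w\|_\lambda\frac{e^{\lambda t}}{\lambda}.
\]
For the memory term we use $|w(\tau)|\le\|w\|_\lambda e^{\lambda\tau}$, which yields
\[
\Big|\int_0^s\alpha(s-\tau)w(\tau)\,d\tau\Big|\le \|w\|_\lambda e^{\lambda s}\int_0^s|\alpha(r)|e^{-\lambda r}dr\le \|\alpha\|_{L^1}\|w\|_\lambda e^{\lambda s}.
\]
Integrating once more in $s$ gives a bound of the form $|\boldsymbol{\xi}|^2\|\alpha\|_{L^1}\|w\|_\lambda e^{\lambda t}/\lambda$. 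Combining the two estimates,
\[
\|\mathcal{K}q_1-\mathcal{K}q_2\|_\lambda\le \frac{\|k\|_\infty+|\boldsymbol{\xi}|^2\|\alpha\|_{L^1}}{\lambda}\|w\|_\lambda .
\]
Choosing $\lambda$ larger than twice the numerator makes $\mathcal{K}$ a contraction. The Banach fixed-point theorem then gives existence and uniqueness of a continuous fixed point, and hence, by the equivalence established above, a unique $W^{1,1}$ solution on all of $[0,T]$. An alternative to the Bielecki norm is to iterate $\mathcal{K}$ and show $\|\mathcal{K}^n\|\le C^nT^n/n!$, but the weighted norm avoids that bookkeeping.

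The main obstacle is the careful justification of the equivalence between the integral and differential formulations. This needs measurability and integrability of $(\alpha*q)$, which holds for $\alpha\in L^1$ only, not for $\alpha$ continuous, and it needs the statement of uniqueness to be made within a specified solution class. We take that class to be $W^{1,1}(0,T)$, which is exactly what arises from the hypothesis $u\in W^{1,1}(0,T;L^2)$ after the Fourier transform is applied, for a.e.\ $\boldsymbol{\xi}$. The contraction estimate itself is routine. We also record that the constant grows like $|\boldsymbol{\xi}|^2$, which is harmless for fixed $\boldsymbol{\xi}$ but will matter for the analyticity lemma that follows.
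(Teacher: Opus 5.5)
Your proof is correct, but it takes a different route from the paper's.

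\textbf{The paper's route.} It first exchanges the order of integration in the memory term, rewriting it as $\int_0^t A(t-s)q(s)\,ds$ with $A(t)=\int_0^t\alpha(r)\,dr$. The problem then becomes a Volterra equation with the bounded kernel $K_{\boldsymbol{\xi}}(t,s)=-c_{\boldsymbol{\xi}}(s)-|\boldsymbol{\xi}|^2A(t-s)$. Existence comes from Picard iteration with the factorial bound $|q^0_{\boldsymbol{\xi}}|(M_{\boldsymbol{\xi}}T)^{n+1}/(n+1)!$, and uniqueness comes separately from Gronwall's inequality.

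\textbf{Your route.} You keep the double integral and control the convolution directly through $|(\alpha*q)(s)|\le\|\alpha\|_{L^1}\|q\|_\infty$. The Bielecki norm then lets a single application of the Banach fixed-point theorem give existence and uniqueness on all of $[0,T]$ at once.

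\textbf{What your version buys.} It is precise about the solution class. You state that uniqueness holds among absolutely continuous ($W^{1,1}$) solutions satisfying the equation a.e., and you prove that this formulation is equivalent to the integral form. The paper leaves the class implicit; its Gronwall step tacitly treats solutions as functions satisfying the integral equation. Your class is exactly the one needed when the lemma is applied to $t\mapsto\widehat w(\boldsymbol{\xi},t)$ in the proof of Theorem~\ref{thm:uniqueness-time-dependent}.

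\textbf{What the paper's version buys.} Its explicit factorial series bound is reused in Lemma~\ref{lem:analytic-multiplier} to obtain uniform convergence of the Picard iterates on compact subsets of $\mathbb{C}^d$. Your approach serves equally well there. The contraction constant $(\|k\|_\infty+|\mathbf z|^2\|\alpha\|_{L^1})/\lambda$ is bounded uniformly for $\mathbf z$ in a compact set, using $|\mathbf z\cdot\mathbf z|\le|\mathbf z|^2$ and $|\mathbf b\cdot\mathbf z|\le\|\mathbf b\|_\infty|\mathbf z|$. So one fixed $\lambda$ makes the iterates converge geometrically and uniformly in $\mathbf z$, which is all the Weierstrass argument requires.
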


\begin{proof}
Fix $\boldsymbol{\xi}\in\mathbb{R}^d$. For simplicity, write
\[
    c_{\boldsymbol{\xi}}(t)
    =
    a(t)|\boldsymbol{\xi}|^2
    +
    i\,\mathbf{b}(t)\cdot\boldsymbol{\xi}.
\]
Then \eqref{eq:q-xi-general} becomes
\[
    q_{\boldsymbol{\xi}}'(t)
    =
    -c_{\boldsymbol{\xi}}(t)q_{\boldsymbol{\xi}}(t)
    -
    |\boldsymbol{\xi}|^2
    \int_0^t
    \alpha(t-\tau)q_{\boldsymbol{\xi}}(\tau)\,d\tau,
    \qquad 
    q_{\boldsymbol{\xi}}(0)=q_{\boldsymbol{\xi}}^0.
\]
Integrating this equation from $0$ to $t$, we obtain
\begin{equation}\label{eq:q-integral-first}
    q_{\boldsymbol{\xi}}(t)
    =
    q_{\boldsymbol{\xi}}^0
    -
    \int_0^t c_{\boldsymbol{\xi}}(s)q_{\boldsymbol{\xi}}(s)\,ds
    -
    |\boldsymbol{\xi}|^2
    \int_0^t
    \int_0^s
    \alpha(s-\tau)q_{\boldsymbol{\xi}}(\tau)\,d\tau\,ds.
\end{equation}
Changing the order of integration in the last term gives
\[
\begin{aligned}
\int_0^t\int_0^s \alpha(s-\tau)q_{\boldsymbol{\xi}}(\tau)\,d\tau\,ds
&=
\int_0^t\left(\int_\tau^t \alpha(s-\tau)\,ds\right)
q_{\boldsymbol{\xi}}(\tau)\,d\tau  \\
&=
\int_0^t A(t-\tau)q_{\boldsymbol{\xi}}(\tau)\,d\tau,
\end{aligned}
\]
where
\[
    A(t)=\int_0^t \alpha(r)\,dr.
\]
Renaming $\tau$ as $s$, \eqref{eq:q-integral-first} becomes
\[
    q_{\boldsymbol{\xi}}(t)
    =
    q_{\boldsymbol{\xi}}^0
    -
    \int_0^t c_{\boldsymbol{\xi}}(s)q_{\boldsymbol{\xi}}(s)\,ds
    -
    |\boldsymbol{\xi}|^2
    \int_0^t
    A(t-s)q_{\boldsymbol{\xi}}(s)\,ds.
\]
Set
\[
    K_{\boldsymbol{\xi}}(t,s)
    =
    -c_{\boldsymbol{\xi}}(s)
    -
    |\boldsymbol{\xi}|^2 A(t-s),
    \qquad 0\leq s\leq t\leq T.
\]
Then the integral equation can be written as
\begin{equation}\label{eq:q-volterra-kernel}
    q_{\boldsymbol{\xi}}(t)
    =
    q_{\boldsymbol{\xi}}^0
    +
    \int_0^t K_{\boldsymbol{\xi}}(t,s)q_{\boldsymbol{\xi}}(s)\,ds.
\end{equation}
Since $\alpha\in L^1(0,T)$, the function $A$ is bounded on $[0,T]$. 
Also, $c_{\boldsymbol{\xi}}\in L^\infty(0,T)$ for each fixed $\boldsymbol{\xi}$. 
Hence there exists a constant $M_{\boldsymbol{\xi}}>0$ such that
\[
    |K_{\boldsymbol{\xi}}(t,s)|\leq M_{\boldsymbol{\xi}},
    \qquad 0\leq s\leq t\leq T.
\]

We now define a Picard sequence by
\[
    q_{\boldsymbol{\xi}}^{(0)}(t)=q_{\boldsymbol{\xi}}^0
\]
and
\begin{equation}\label{eq:picard-q}
    q_{\boldsymbol{\xi}}^{(n+1)}(t)
    =
    q_{\boldsymbol{\xi}}^0
    +
    \int_0^t K_{\boldsymbol{\xi}}(t,s)
    q_{\boldsymbol{\xi}}^{(n)}(s)\,ds,
    \qquad n\geq 0.
\end{equation}
A standard induction gives
\[
    \left|q_{\boldsymbol{\xi}}^{(n+1)}(t)-q_{\boldsymbol{\xi}}^{(n)}(t)\right|
    \leq
    |q_{\boldsymbol{\xi}}^0|
    \frac{(M_{\boldsymbol{\xi}}t)^{n+1}}{(n+1)!},
    \qquad 0\leq t\leq T.
\]
Since $0\leq t\leq T$, we also have
\[
    \left|q_{\boldsymbol{\xi}}^{(n+1)}(t)-q_{\boldsymbol{\xi}}^{(n)}(t)\right|
    \leq
    |q_{\boldsymbol{\xi}}^0|
    \frac{(M_{\boldsymbol{\xi}}T)^{n+1}}{(n+1)!},
    \qquad 0\leq t\leq T.
\]
The series
\[
    \sum_{n=0}^{\infty}
    |q_{\boldsymbol{\xi}}^0|
    \frac{(M_{\boldsymbol{\xi}}T)^{n+1}}{(n+1)!}
\]
is convergent. Hence the series
\[
    \sum_{n=0}^{\infty}
    \left(
        q_{\boldsymbol{\xi}}^{(n+1)}(t)
        -
        q_{\boldsymbol{\xi}}^{(n)}(t)
    \right)
\]
converges uniformly and absolutely on $[0,T]$. Since
\[
    q_{\boldsymbol{\xi}}^{(n)}(t)
    =
    q_{\boldsymbol{\xi}}^{(0)}(t)
    +
    \sum_{k=0}^{n-1}
    \left(
        q_{\boldsymbol{\xi}}^{(k+1)}(t)
        -
        q_{\boldsymbol{\xi}}^{(k)}(t)
    \right),
\]
we conclude that $\{q_{\boldsymbol{\xi}}^{(n)}\}_{n\geq 0}$ converges uniformly on $[0,T]$ to a function $q_{\boldsymbol{\xi}}$. 
Since each $q_{\boldsymbol{\xi}}^{(n)}$ is continuous, the limit $q_{\boldsymbol{\xi}}$ is also continuous on $[0,T]$. 
Passing to the limit in \eqref{eq:picard-q} gives
\[
    q_{\boldsymbol{\xi}}(t)
    =
    q_{\boldsymbol{\xi}}^0
    +
    \int_0^t K_{\boldsymbol{\xi}}(t,s)q_{\boldsymbol{\xi}}(s)\,ds.
\]
Thus, a solution exists.

To prove uniqueness, suppose that $q_{\boldsymbol{\xi}}^{(1)}$ and 
$q_{\boldsymbol{\xi}}^{(2)}$ are two solutions of \eqref{eq:q-xi-general} with the same initial value. 
Set
\[
    r(t)=q_{\boldsymbol{\xi}}^{(1)}(t)-q_{\boldsymbol{\xi}}^{(2)}(t).
\]
Then
\[
    r(t)=\int_0^t K_{\boldsymbol{\xi}}(t,s)r(s)\,ds, \qquad r(0) = 0.
\]
Using $|K_{\boldsymbol{\xi}}(t,s)|\leq M_{\boldsymbol{\xi}}$, we obtain
\begin{equation}\label{2.8}
    |r(t)|
    \leq
    M_{\boldsymbol{\xi}}\int_0^t |r(s)|\,ds,
    \qquad 0\leq t\leq T.
\end{equation}
Since $r(0)=0$, \eqref{2.8} can be written as
\[
    |r(t)|
    \leq
    |r(0)|
    +
    M_{\boldsymbol{\xi}}\int_0^t |r(s)|\,ds,
    \qquad 0\leq t\leq T.
\]
By Gronwall's inequality, this implies
\[
    |r(t)| \leq |r(0)|\exp(M_{\boldsymbol{\xi}}t)=0,
    \qquad 0\leq t\leq T.
\]
Hence $r\equiv 0$ on $[0,T]$, and the solution to \eqref{eq:q-xi-general} is unique.
\end{proof}

\begin{Lemma}\label{lem:analytic-multiplier}
Under the assumptions of Theorem \ref{thm:uniqueness-time-dependent}, for each 
$\boldsymbol{\xi}\in\mathbb{R}^d$, let $q_{\boldsymbol{\xi}}$ be the solution of
\begin{equation}\label{eq:q-xi-unit}
    q_{\boldsymbol{\xi}}'(t)
    =
    -
    \Big(
        a(t)|\boldsymbol{\xi}|^2
        +
        i\,\mathbf{b}(t)\cdot\boldsymbol{\xi}
    \Big)
    q_{\boldsymbol{\xi}}(t)
    -
    |\boldsymbol{\xi}|^2
    \int_0^t
    \alpha(t-\tau)q_{\boldsymbol{\xi}}(\tau)\,d\tau,
    \qquad 
    q_{\boldsymbol{\xi}}(0)=1.
\end{equation}
Then, for each fixed $t\in[0,T]$, the map
\[
    \boldsymbol{\xi}\mapsto q_{\boldsymbol{\xi}}(t)
\]
is real analytic on $\mathbb{R}^d$. Moreover,
\[
    q_{\mathbf{0}}(t)=1
    \qquad \text{for all } t\in[0,T].
\]
Consequently, $\boldsymbol{\xi}\mapsto q_{\boldsymbol{\xi}}(T)$ is not identically zero.
\end{Lemma}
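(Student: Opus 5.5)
The plan is to prove analyticity by extending the frequency variable to complex values and showing that the Picard series from the proof of Lemma \ref{lem:volterra-multiplier} converges locally uniformly there. Replace $|\boldsymbol{\xi}|^2$ by $\mathbf{z}\cdot\mathbf{z}=\sum_k z_k^2$ and $\mathbf{b}(t)\cdot\boldsymbol{\xi}$ by $\sum_k b_k(t)z_k$, with $\mathbf{z}\in\mathbb{C}^d$. This gives the kernel
\[
    K_{\mathbf{z}}(t,s) = -a(s)\,\mathbf{z}\cdot\mathbf{z} - i\,\mathbf{b}(s)\cdot\mathbf{z} - (\mathbf{z}\cdot\mathbf{z})\,A(t-s),
    \qquad A(t)=\int_0^t\alpha(r)\,dr .
\]
For each fixed $(t,s)$, this kernel is a polynomial in $\mathbf{z}$. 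Consider the Volterra integral equation
\[
    q_{\mathbf{z}}(t)=1+\int_0^t K_{\mathbf{z}}(t,s)\,q_{\mathbf{z}}(s)\,ds,
\]
and its Picard iterates $q^{(n)}_{\mathbf{z}}$, starting from $q^{(0)}_{\mathbf{z}}\equiv 1$. For real $\mathbf{z}=\boldsymbol{\xi}$, this is exactly the integrated form \eqref{eq:q-volterra-kernel} of \eqref{eq:q-xi-unit}. By the uniqueness part of Lemma \ref{lem:volterra-multiplier}, its solution therefore coincides with $q_{\boldsymbol{\xi}}$.

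Next, show by induction that each iterate $q^{(n)}_{\mathbf{z}}(t)$ is, for fixed $t$, a polynomial in $\mathbf{z}$, hence entire on $\mathbb{C}^d$. The point is that
\[
    q^{(n+1)}_{\mathbf{z}}(t)=1+\int_0^t K_{\mathbf{z}}(t,s)\,q^{(n)}_{\mathbf{z}}(s)\,ds .
\]
Expanding both factors in monomials of $\mathbf{z}$, the coefficients are integrals of products of bounded measurable functions of $s$ ($a$, $b_k$, $A(t-\cdot)$) with the previous coefficient functions. These remain bounded and measurable in $s$, so the induction closes.

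Then fix a compact set $B\subset\mathbb{C}^d$, say a closed polydisc. Since $a\in L^\infty(0,T)$, $\mathbf{b}\in L^\infty(0,T;\mathbb{R}^d)$, and $A$ is bounded because $\alpha\in L^1(0,T)$, we get a uniform bound $|K_{\mathbf{z}}(t,s)|\le M_B$ for all $\mathbf{z}\in B$ and $0\le s\le t\le T$. The estimate from Lemma \ref{lem:volterra-multiplier} then carries over verbatim:
\[
    \bigl|q^{(n+1)}_{\mathbf{z}}(t)-q^{(n)}_{\mathbf{z}}(t)\bigr|\le \frac{(M_BT)^{n+1}}{(n+1)!}.
\]
So the iterates converge uniformly on $B\times[0,T]$. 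A locally uniform limit of holomorphic functions of several complex variables is holomorphic, so $\mathbf{z}\mapsto q_{\mathbf{z}}(t)$ is entire for each fixed $t$. Its restriction to $\mathbb{R}^d$, which is $\boldsymbol{\xi}\mapsto q_{\boldsymbol{\xi}}(t)$, is therefore real analytic.

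The remaining claims are direct. At $\boldsymbol{\xi}=\mathbf{0}$ the equation reduces to $q_{\mathbf{0}}'=0$ with $q_{\mathbf{0}}(0)=1$, so $q_{\mathbf{0}}\equiv 1$; equivalently, $K_{\mathbf{0}}\equiv 0$ and every Picard iterate equals $1$. In particular $q_{\mathbf{0}}(T)=1\neq 0$, so $\boldsymbol{\xi}\mapsto q_{\boldsymbol{\xi}}(T)$ is not identically zero. By analyticity, its zero set then has Lebesgue measure zero, which is how it will be used later.

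The main obstacle is the bookkeeping needed to justify holomorphy of the iterates when the coefficients are only $L^\infty$ or $L^1$ in time. The polynomial-coefficient induction above handles this cleanly. An alternative would be to differentiate under the integral using dominated convergence via the uniform bound $M_B$.
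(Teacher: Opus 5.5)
Your proposal is correct and follows essentially the same route as the paper. You complexify $\boldsymbol{\xi}$ to $\mathbf{z}\in\mathbb{C}^d$, note that the Picard iterates are holomorphic, and carry the estimate of Lemma~\ref{lem:volterra-multiplier} uniformly over compact sets. Then you apply the Weierstrass theorem, restrict to $\mathbb{R}^d$, and observe that $q_{\mathbf{0}}\equiv 1$. Your polynomial-coefficient induction, the holomorphic extension of $\mathbf{b}(t)\cdot\boldsymbol{\xi}$, and the identification of the limit with $q_{\boldsymbol{\xi}}$ via uniqueness are details the paper leaves implicit.
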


\begin{proof}
The unique solvability of \eqref{eq:q-xi-unit} follows from Lemma 
\ref{lem:volterra-multiplier} by taking $q_{\boldsymbol{\xi}}^0=1$.

It remains to show the analytic dependence on $\boldsymbol{\xi}$. 
Recall from the proof of Lemma \ref{lem:volterra-multiplier} that \eqref{eq:q-xi-unit} is equivalent to the Volterra integral equation
\[
    q_{\boldsymbol{\xi}}(t)
    =
    1+
    \int_0^t K_{\boldsymbol{\xi}}(t,s)q_{\boldsymbol{\xi}}(s)\,ds,
\]
where
\[
    K_{\boldsymbol{\xi}}(t,s)
    =
    -
    \Big(
        a(s)|\boldsymbol{\xi}|^2
        +
        i\,\mathbf{b}(s)\cdot\boldsymbol{\xi}
    \Big)
    -
    |\boldsymbol{\xi}|^2 A(t-s),
    \qquad
    A(t)=\int_0^t \alpha(r)\,dr.
\]
The dependence of $K_{\boldsymbol{\xi}}(t,s)$ on $\boldsymbol{\xi}$ is polynomial. 
Equivalently, extend $\boldsymbol{\xi}\in\mathbb{R}^d$ to $\mathbf{z}\in\mathbb{C}^d$ and replace $|\boldsymbol{\xi}|^2$ by
\[
    \mathbf{z}\cdot\mathbf{z}=\sum_{j=1}^d z_j^2.
\]
The corresponding kernel is holomorphic in $\mathbf{z}$.

The Picard iteration \eqref{eq:picard-q} used in the proof of Lemma \ref{lem:volterra-multiplier} therefore generates functions that are holomorphic in $\mathbf{z}$. 
Moreover, the same estimates used in Lemma \ref{lem:volterra-multiplier} show that the Picard sequence converges uniformly on compact subsets of $\mathbb{C}^d$, uniformly for $t\in[0,T]$. 
Hence, by the Weierstrass theorem, the limit is holomorphic in $\mathbf{z}$. 
Restricting to real $\mathbf{z}=\boldsymbol{\xi}$, we conclude that 
$\boldsymbol{\xi}\mapsto q_{\boldsymbol{\xi}}(t)$ is real analytic on 
$\mathbb{R}^d$ for each fixed $t\in[0,T]$.

Finally, when $\boldsymbol{\xi}=\mathbf{0}$, equation \eqref{eq:q-xi-unit} reduces to
\[
    q_{\mathbf{0}}'(t)=0,
    \qquad q_{\mathbf{0}}(0)=1.
\]
Thus $q_{\mathbf{0}}(t)=1$ for all $t\in[0,T]$. 
In particular, $q_{\mathbf{0}}(T)=1$, and hence $q_{\boldsymbol{\xi}}(T)$ is not identically zero as a function of $\boldsymbol{\xi}$.
\end{proof}

\begin{proof}[Proof of Theorem \ref{thm:uniqueness-time-dependent}]
Define $w=u^{(2)}-u^{(1)}$. Then $w$ satisfies
\begin{equation}\label{eq:w-memory-equation}
     w_t(\mathbf{x},t)
        + \mathbf{b}(t)\cdot \nabla w(\mathbf{x},t)
        =
        a(t)\Delta w(\mathbf{x},t)
        +
        \int_0^t \alpha(t-\tau)\Delta w(\mathbf{x},\tau)\,d\tau,
\end{equation}
for $(\mathbf{x},t)\in \mathbb{R}^d\times(0,T)$.

We take the Fourier transform with respect to $\mathbf x$, using the convention
\[
\widehat w(\xi,t)=\int_{\mathbb R^d} w(\mathbf x,t)e^{-i\mathbf x\cdot \xi}\,d\mathbf x.
\]
Since $w(\cdot,t)$ is only assumed to belong to $L^2(\mathbb R^d)$, this formula is understood in the standard $L^2$ sense, that is, by the unitary extension of the Fourier transform from $\mathcal S(\mathbb R^d)$ to $L^2(\mathbb R^d)$. 
Then
\[
    \mathcal{F}_{\mathbf{x}}[\nabla w]
    =
    i\boldsymbol{\xi}\widehat w,
    \qquad
    \mathcal{F}_{\mathbf{x}}[\Delta w]
    =
    -|\boldsymbol{\xi}|^2\widehat w.
\]
Applying the Fourier transform to \eqref{eq:w-memory-equation}, we obtain
\[
    \partial_t \widehat w(\boldsymbol{\xi},t)
    +
    i\,\mathbf{b}(t)\cdot\boldsymbol{\xi}\,
    \widehat w(\boldsymbol{\xi},t)
    =
    -a(t)|\boldsymbol{\xi}|^2\widehat w(\boldsymbol{\xi},t)
    -
    |\boldsymbol{\xi}|^2
    \int_0^t
    \alpha(t-\tau)\widehat w(\boldsymbol{\xi},\tau)\,d\tau.
\]
Moreover, by \eqref{eq:same-terminal-data}, we have
\[
    w(\cdot,T)=0
    \quad \mbox{in } L^2(\mathbb{R}^d).
\]
Therefore,
\begin{equation}\label{eq:w-hat-terminal-zero}
    \widehat w(\boldsymbol{\xi},T)=0
    \qquad \text{for a.e. } \boldsymbol{\xi}\in\mathbb{R}^d.
\end{equation}

Let $q_{\boldsymbol{\xi}}$ be the solution to \eqref{eq:q-xi-unit}. 
Fix $\boldsymbol{\xi}\in\mathbb{R}^d$ for which the Fourier-mode equation is valid, and define
\[
    z(t)=q_{\boldsymbol{\xi}}(t)\widehat w(\boldsymbol{\xi},0),
    \qquad t\in[0,T].
\]
Since \eqref{eq:q-xi-unit} is linear and homogeneous, $z$ satisfies the scalar Volterra equation \eqref{eq:q-xi-general} with initial condition
\[
    z(0)=\widehat w(\boldsymbol{\xi},0).
\]
On the other hand, $\widehat w(\boldsymbol{\xi},t)$ also satisfies the same scalar Volterra equation with the same initial condition. 
By Lemma \ref{lem:volterra-multiplier}, we obtain
\[
    \widehat w(\boldsymbol{\xi},t)
    =
    q_{\boldsymbol{\xi}}(t)\widehat w(\boldsymbol{\xi},0),
    \qquad t\in[0,T].
\]
Taking $t=T$ and using \eqref{eq:w-hat-terminal-zero}, we get
\[
    0
    =
    \widehat w(\boldsymbol{\xi},T)
    =
    q_{\boldsymbol{\xi}}(T)\widehat w(\boldsymbol{\xi},0).
\]
By Lemma \ref{lem:analytic-multiplier}, the function 
$\boldsymbol{\xi}\mapsto q_{\boldsymbol{\xi}}(T)$ is real analytic and not identically zero. 
Therefore its zero set has Lebesgue measure zero. Hence
\[
    q_{\boldsymbol{\xi}}(T)\neq 0
    \quad \text{for a.e. } \boldsymbol{\xi}\in\mathbb{R}^d.
\]
It follows that
\[
    \widehat w(\boldsymbol{\xi},0)=0
    \quad \text{for a.e. } \boldsymbol{\xi}\in\mathbb{R}^d.
\]
Therefore,
\[
    w(\cdot,0)=0
    \quad \mbox{in } L^2(\mathbb{R}^d).
\]
Equivalently,
\[
    u^{(1)}(\cdot,0)=u^{(2)}(\cdot,0)
    \quad \mbox{in } L^2(\mathbb{R}^d).
\]
Thus, the initial data are uniquely determined by the terminal data.
\end{proof}

\begin{Remark}
The spatially independent setting in Theorem \ref{thm:uniqueness-time-dependent} is more restrictive than the variable-coefficient setting used in the numerical method in Section \ref{sec2} below, but it is still a relevant case. Many works on parabolic equations with memory consider constant or time-dependent diffusion coefficients and memory kernels depending only on the time lag; see, for example, \cite{DurdievZhumaevMMAS2022, DurdievZhumaevUMJ2022, LuanKhanhApplicableAnalysis2021}. \end{Remark}

\begin{Remark}
The spatial independence of the coefficients is used only in the uniqueness proof, where it allows the Fourier modes to decouple into scalar Volterra equations. This assumption is not required in the Legendre reduction or in the Tikhonov reconstruction method developed below. The numerical method is formulated for variable coefficients $a=a(\mathbf x,t)$, $\mathbf b=\mathbf b(\mathbf x,t)$, and $\alpha=\alpha(\mathbf x,t-\tau)$.
\end{Remark}

\section{Legendre spatial dimensional reduction method}
\label{sec2}

We now return to the variable-coefficient case used in the computational method:
\[
    a=a(\mathbf{x},t), 
    \qquad 
    \mathbf{b}=\mathbf{b}(\mathbf{x},t),
    \qquad 
    \alpha=\alpha(\mathbf{x},t-\tau).
\]
The reconstruction is performed in a bounded computational window
\[
    \Omega_R=(-R,R)^d.
\]
The goal of this section is to expand the solution in a tensor-product Legendre basis on $\Omega_R$ and derive a finite-dimensional system for the Fourier--Legendre coefficients.

Let $\{P_n\}_{n \ge 0}$ denote the classical Legendre polynomials on
$(-1,1)$.
We introduce the rescaled Legendre polynomials on $(-R,R)$ by
$$
Q_n(x) \;=\; P_n\!\left(\frac{x}{R}\right),
\qquad x \in (-R,R),\quad n \ge 0.
$$
An equivalent representation via the Rodrigues formula is
$$
Q_n(x)
\;=\;
\frac{1}{2^n n!\, R^n}
\frac{d^n}{dx^n}\bigl(x^2 - R^2\bigr)^n,
\qquad x \in (-R,R).
$$
The family $\{Q_n\}_{n \ge 0}$ satisfies the weighted orthogonality
relation
$$
\int_{-R}^{R} Q_m(x)\,Q_n(x)\,dx
\;=\;
\frac{2R}{2n+1}\,\delta_{mn},
\qquad m,\,n \ge 0,
$$
where $\delta_{mn}$ is the Kronecker delta.
The corresponding \emph{normalized} Legendre functions are defined by
$$
\Psi_n(x)
\;=\;
\left(\frac{2n+1}{2R}\right)^{\!1/2} Q_n(x)
\;=\;
\left(\frac{2n+1}{2R}\right)^{\!1/2}
P_n\!\left(\frac{x}{R}\right),
\qquad n \ge 0,
$$
and $\{\Psi_n\}_{n \ge 0}$ constitutes an orthonormal basis of
$L^2(-R,R)$:
$$
\int_{-R}^{R} \Psi_m(x)\,\Psi_n(x)\,dx \;=\; \delta_{mn},
\qquad m,\,n \ge 0.
$$

For each multi-index $\mathbf{n} = (n_1,\dots,n_d) \in \mathbb{N}^d$,
define the $d$-dimensional basis function
$$
\Phi_{\mathbf{n}}(\mathbf{x})
\;=\;
\prod_{j=1}^{d} \Psi_{n_j}(x_j),
\qquad \mathbf{x} = (x_1,\dots,x_d) \in \Omega_R.
$$
The family $\{\Phi_{\mathbf{n}}\}_{\mathbf{n} \in \mathbb{N}^d}$ forms a
complete orthonormal basis of $L^2(\Omega_R)$ by the tensor-product
structure of the one-dimensional basis.

For each $\mathbf{n} \in \mathbb{N}^d$, the Fourier--Legendre coefficient
of $u(\cdot,t)$ is defined by
\begin{equation}\label{coef}
	u_{\mathbf{n}}(t)
	\;=\;
	\int_{\Omega_R} u(\mathbf{x},t)\,\Phi_{\mathbf{n}}(\mathbf{x})\,d\mathbf{x}.
\end{equation}

The following convergence result is a standard consequence of the approximation theory for tensor-product Legendre projections. 
It follows directly from classical spectral approximation estimates; see, for example, \cite[Theorem~2.4]{CanutoQuarteroni}. 
The convergence of the differentiated expansions follows from the same approximation result together with the boundedness of the operators $\nabla:H^\mu(\Omega_R)\to H^{\mu-1}(\Omega_R)^d$ and $\Delta:H^\mu(\Omega_R)\to H^{\mu-2}(\Omega_R)$.

\begin{Proposition}[Convergence of the Fourier--Legendre expansion]
\label{prop:legendre-convergence}
Let $s>0$ and let $u\in L^2(0,T;H^s(\Omega_R))$. 
For $\mathbf{N}=(N_1,\dots,N_d)\in\mathbb{N}^d$, define the truncated Fourier--Legendre expansion
\begin{equation}\label{truncation}
    u^{\mathbf{N}}(\mathbf{x},t)
    =
    \sum_{\mathbf{n}\preceq\mathbf{N}}
    u_{\mathbf{n}}(t)\Phi_{\mathbf{n}}(\mathbf{x}),
    \qquad
    u_{\mathbf{n}}(t)
    =
    \int_{\Omega_R}u(\mathbf{x},t)\Phi_{\mathbf{n}}(\mathbf{x})\,d\mathbf{x},
\end{equation}
where $\mathbf{n}\preceq\mathbf{N}$ means $0\le n_j\le N_j$ for all $j=1,\dots,d$. 
Then, for every $0\le \mu<s$,
\[
    \|u-u^{\mathbf{N}}\|_{L^2(0,T;H^\mu(\Omega_R))}
    \to 0
    \qquad
    \text{as } \min_{1\le j\le d}N_j\to\infty.
\]
Equivalently,
\begin{equation}\label{trunc}
    u(\mathbf{x},t)
    =
    \sum_{\mathbf{n}\in\mathbb{N}^d}
    u_{\mathbf{n}}(t)\Phi_{\mathbf{n}}(\mathbf{x})
    \quad
    \text{in } L^2(0,T;H^\mu(\Omega_R)),
    \qquad 0\le \mu<s.
\end{equation}
Moreover, if $s>1$, then
\begin{equation}\label{2.3}
    \nabla u
    =
    \sum_{\mathbf{n}\in\mathbb{N}^d}
    u_{\mathbf{n}}(t)\nabla\Phi_{\mathbf{n}}
    \quad
    \text{in } L^2(0,T;H^{\mu-1}(\Omega_R)^d),
    \qquad 1<\mu<s,
\end{equation}
and if $s>2$, then
\begin{equation}\label{2.4}
    \Delta u
    =
    \sum_{\mathbf{n}\in\mathbb{N}^d}
    u_{\mathbf{n}}(t)\Delta\Phi_{\mathbf{n}}
    \quad
    \text{in } L^2(0,T;H^{\mu-2}(\Omega_R)),
    \qquad 2<\mu<s.
\end{equation}
\end{Proposition}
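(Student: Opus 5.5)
Our plan is to reduce the time-dependent statement to a pointwise-in-time spectral estimate and then integrate in $t$ using dominated convergence. Write $\Pi_{\mathbf N}$ for the $L^2(\Omega_R)$-orthogonal projection onto $\mathrm{span}\{\Phi_{\mathbf n}:\mathbf n\preceq\mathbf N\}$. Then $u^{\mathbf N}(\cdot,t)=\Pi_{\mathbf N}u(\cdot,t)$ for a.e.\ $t$.

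The first ingredient is the classical tensor-product Legendre approximation estimate, which we invoke from \cite[Theorem~2.4]{CanutoQuarteroni}. For $0\le\mu\le s$ (with the usual restriction $\mu\ge 1$ handled as in that reference), there is a constant $C$ independent of $\mathbf N$ and $v$ such that
\[
\|v-\Pi_{\mathbf N}v\|_{H^\mu(\Omega_R)}\le C\,N_{\min}^{\,e(\mu,s)}\|v\|_{H^s(\Omega_R)},
\]
where $N_{\min}=\min_j N_j$. The exponent satisfies $e(\mu,s)<0$ whenever $\mu$ is sufficiently below $s$; for the $L^2$ projection in $H^\mu$ one has $e=2\mu-\tfrac12-s$ for $\mu\ge1$ and $e=-s$ for $\mu=0$.

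To reach every $\mu<s$, and not only $\mu<(s+1/2)/2$, we combine two facts:
\begin{itemize}
\item[(a)] the $L^2$ estimate $\|v-\Pi_{\mathbf N}v\|_{L^2}\to0$ for every $v\in L^2$;
\item[(b)] the uniform bound $\|\Pi_{\mathbf N}v\|_{H^s}\le C\|v\|_{H^s}$.
\end{itemize}
Interpolating between $H^0$ and $H^s$ then gives
\[
\|v-\Pi_{\mathbf N}v\|_{H^\mu}\le C\|v-\Pi_{\mathbf N}v\|_{L^2}^{1-\mu/s}\|v\|_{H^s}^{\mu/s}\to0 .
\]
Fact (b) is the main obstacle. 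The plain $L^2$ Legendre projection is known not to be uniformly $H^1$-stable: its stability constant grows like $N^{1/2}$. So (b) may genuinely fail, and in that case the statement holds only for a restricted range of $\mu$. If that happens, the fallback is to read the proposition as a direct citation of \cite{CanutoQuarteroni} for the range where $e(\mu,s)<0$. I would attempt (b) first via the Sturm--Liouville characterization of Legendre polynomials, $((1-x^2)P_n')'=-n(n+1)P_n$. This yields $\Pi_{\mathbf N}$-stability in the weighted spaces built from that operator, which I would then compare with $H^\mu$ on the interior.

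Once the pointwise-in-time convergence $\|u(\cdot,t)-u^{\mathbf N}(\cdot,t)\|_{H^\mu}\to0$ holds for a.e.\ $t$, we need a dominating function. The bound $\|u-u^{\mathbf N}\|_{H^\mu}\le C\|u(\cdot,t)\|_{H^s}$ (valid where the constant is uniform) provides one in $L^2(0,T)$. Dominated convergence then gives convergence in $L^2(0,T;H^\mu(\Omega_R))$. The series form \eqref{trunc} is simply a restatement of this, since the partial sums over the boxes $\mathbf n\preceq\mathbf N$ are exactly $u^{\mathbf N}$.

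For \eqref{2.3} and \eqref{2.4}, we apply the bounded operators $\nabla:H^\mu\to H^{\mu-1}$ and $\Delta:H^\mu\to H^{\mu-2}$ to the difference $u-u^{\mathbf N}$, and note that they commute with the finite sums. Boundedness gives
\[
\|\nabla u-\nabla u^{\mathbf N}\|_{L^2(0,T;H^{\mu-1})}\le C\|u-u^{\mathbf N}\|_{L^2(0,T;H^\mu)}\to0,
\]
and similarly for $\Delta$. The hypotheses $s>1$ and $s>2$ are there only so that the ranges $1<\mu<s$ and $2<\mu<s$ are nonempty.
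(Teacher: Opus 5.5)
Your route is the paper's route. The paper's entire justification is the citation of \cite[Theorem~2.4]{CanutoQuarteroni} together with the boundedness of $\nabla$ and $\Delta$. Your diagnosis is also correct, and it exposes a defect the paper does not acknowledge.

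The cited estimate controls the $L^2$ projection in $H^\mu$ only with exponent $\tfrac32\mu-s$ for $0\le\mu\le1$ (you recorded only $\mu=0$) and $2\mu-s-\tfrac12$ for $\mu\ge1$. It therefore yields convergence only for $\mu<\tfrac23 s$, respectively $\mu<\tfrac s2+\tfrac14$. The gap is your step (b), and no argument can close it, because the proposition is false on part of its stated range. (The Sturm--Liouville route only gives stability in the Legendre-weighted spaces, which are weaker than $H^\mu$ at $x_j=\pm R$.)

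Here is a counterexample. Take $u(\mathbf x,t)=v(x_1/R)$. Then $u^{\mathbf N}=(\Pi_{N_1}v)(x_1/R)$, where $\Pi_N$ is the $L^2(-1,1)$ projection onto $\mathbb{P}_N$. Let $v_N=\int_{-1}^{x}P_N=\frac{P_{N+1}-P_{N-1}}{2N+1}$. Then
\[
v_N-\Pi_Nv_N=\frac{P_{N+1}}{2N+1},\qquad |v_N-\Pi_Nv_N|_{H^1}=\frac{\sqrt{(N+1)(N+2)}}{2N+1}\ge\frac12 .
\]
Interpolating $\|P_N\|_{L^2}\sim N^{-1/2}$ and $\|P_N\|_{H^1}\sim N$ gives $\|v_N\|_{H^s}\lesssim N^{\frac32 s-2}$ for $1<s<2$. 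Hence $\|I-\Pi_N\|_{H^s\to H^1}\gtrsim N^{2-\frac32 s}\to\infty$ for $1<s<\tfrac43$. The uniform boundedness principle then gives $v\in H^s$ with $\sup_N\|v-\Pi_Nv\|_{H^1}=\infty$, contradicting the claim at $\mu=1<s$.

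The same construction works one order higher. Let $w_N=\frac{1}{2N+1}\bigl(\frac{P_{N+2}-P_N}{2N+3}-\frac{P_N-P_{N-2}}{2N-1}\bigr)$. Then $w_N''=P_N$ and $w_N-\Pi_Nw_N=\frac{P_{N+2}}{(2N+1)(2N+3)}$. Since $\|P_n''\|_{L^2}\sim n^3$, this gives $\|I-\Pi_N\|_{H^s\to H^2}\gtrsim N^{\frac92-\frac32 s}$. So $H^2$-convergence, and with it \eqref{2.4}, fails for some $u$ whenever $2<s<3$.

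Your fallback is therefore the correct conclusion, not a weakness of your proposal. On the range where $e(\mu,s)<0$ your argument is complete, and you do not need dominated convergence there. Squaring the pointwise estimate and integrating in $t$ gives $\|u-u^{\mathbf N}\|_{L^2(0,T;H^\mu)}\le C N_{\min}^{e(\mu,s)}\|u\|_{L^2(0,T;H^s)}$, a rate and not just convergence.

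The restriction propagates downstream:
\begin{itemize}
\item Via your argument, \eqref{2.3} and \eqref{2.4} hold only for $\mu<\tfrac s2+\tfrac14$. This requires $s>\tfrac32$ and $s>\tfrac72$ respectively, not $s>1$ and $s>2$.
\item For the same reason, the step in the proof of Proposition~\ref{lem:residual-decay} asserting $P_{\mathbf N}u\to u$ in $L^2(0,T;H^2)$ for every $s>2$ is unjustified. The cited estimate needs $s>\tfrac72$ there, and the example above shows failure for $2<s<3$.
\end{itemize}
Recovering the full range $\mu<s$ would require an $H^m$-stable projection with $m\ge\mu$ in place of the $L^2$ one. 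That would change the coefficients $u_{\mathbf n}$ from which the reduced system is derived.
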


Substituting \eqref{trunc}, \eqref{2.3}, and \eqref{2.4} into
\eqref{eq:linear-convection-diffusion-memory} gives
\begin{align}
\label{sub-memory-full}
  &\sum_{\mathbf{n}\in\mathbb{N}^d}
  u_{\mathbf{n}}'(t)\Phi_{\mathbf{n}}(\mathbf{x})
  +
  \mathbf{b}(\mathbf{x},t)\cdot
  \sum_{\mathbf{n}\in\mathbb{N}^d}
  u_{\mathbf{n}}(t)\nabla\Phi_{\mathbf{n}}(\mathbf{x})
  \notag\\
  &\qquad =
  a(\mathbf{x},t)
  \sum_{\mathbf{n}\in\mathbb{N}^d}
  u_{\mathbf{n}}(t)\Delta\Phi_{\mathbf{n}}(\mathbf{x})
  +
  \int_0^t
  \alpha(\mathbf{x},t-\tau)
  \sum_{\mathbf{n}\in\mathbb{N}^d}
  u_{\mathbf{n}}(\tau)\Delta\Phi_{\mathbf{n}}(\mathbf{x})\,d\tau .
\end{align}
For each $\mathbf{m} \in \mathbb{N}^d$, multiplying \eqref{sub-memory-full} by $\Phi_{\mathbf{m}}(\mathbf{x})$,
integrating over $\Omega_R$, and using the orthonormality of
$\{\Phi_{\mathbf n}\}_{\mathbf n\in\mathbb N^d}$, we obtain, for each
$\mathbf{m}\in\mathbb{N}^d$,
\begin{align}
\label{coef-system-memory}
  u_{\mathbf{m}}'(t)
  &=
  \sum_{\mathbf{n}\in\mathbb{N}^d}
  C_{\mathbf{m}\mathbf{n}}(t)u_{\mathbf{n}}(t)
  +
  \sum_{\mathbf{n}\in\mathbb{N}^d}
  \int_0^t
  E_{\mathbf{m}\mathbf{n}}(t-\tau)u_{\mathbf{n}}(\tau)\,d\tau ,
  \qquad t\in(0,T),
\end{align}
where
\begin{align}
  A_{\mathbf{m}\mathbf{n}}(t)
  &:=
  \int_{\Omega_R}
  a(\mathbf{x},t)
  \Delta\Phi_{\mathbf{n}}(\mathbf{x})
  \Phi_{\mathbf{m}}(\mathbf{x})\,d\mathbf{x}, \label{Adef}
  \\
  B_{\mathbf{m}\mathbf{n}}(t)
  &:=
  \int_{\Omega_R}
  \bigl(\mathbf{b}(\mathbf{x},t)\cdot
  \nabla\Phi_{\mathbf{n}}(\mathbf{x})\bigr)
  \Phi_{\mathbf{m}}(\mathbf{x})\,d\mathbf{x},
  \\
  E_{\mathbf{m}\mathbf{n}}(s)
  &:=
  \int_{\Omega_R}
  \alpha(\mathbf{x},s)
  \Delta\Phi_{\mathbf{n}}(\mathbf{x})
  \Phi_{\mathbf{m}}(\mathbf{x})\,d\mathbf{x},
  \qquad s\in[0,T],
  \\
  C_{\mathbf{m}\mathbf{n}}(t)
  &:=
  A_{\mathbf{m}\mathbf{n}}(t)-B_{\mathbf{m}\mathbf{n}}(t).\label{Cdef}
\end{align}
Fix  a ``cutoff" vector $\mathbf{N} \in \mathbb{N}^d$. We write \eqref{coef-system-memory} as 
\begin{align}
\label{coef-system-memory1}
  {u_{\mathbf{m}}}'(t)
  &=
  \sum_{\mathbf{n}\preceq\mathbf{N}}
  C_{\mathbf{m}\mathbf{n}}(t)u_{\mathbf{n}}(t)
  +
  \sum_{\mathbf{n}\preceq\mathbf{N}}
  \int_0^t
  E_{\mathbf{m}\mathbf{n}}(t-\tau)u_{\mathbf{n}}(\tau)\,d\tau + R_{\mathbf{m}}^{\mathbf{N}}(t),
  \qquad t\in(0,T),
\end{align}
where
\[
	R_{\mathbf{m}}^{\mathbf{N}}(t)
  := \sum_{\mathbf{n} \npreceq \mathbf{N}}
     C_{\mathbf{m}\mathbf{n}}(t)\,u_{\mathbf{n}}(t)
     +
     \sum_{\mathbf{n} \npreceq \mathbf{N}}
     \int_0^t
     E_{\mathbf{m}\mathbf{n}}(t-\tau)u_{\mathbf{n}}(\tau)\,d\tau,
  \qquad \mathbf{m} \preceq \mathbf{N}.
\]
\begin{Proposition}[Decay of the truncation residual]
\label{lem:residual-decay}
Assume that $u \in L^2(0,T;\,H^s(\Omega_R))$ for some $s > 2$, and that
$a \in L^\infty(\Omega_R \times (0,T))$,
$\mathbf{b} \in L^\infty(\Omega_R \times (0,T))^d$, and
$\alpha \in L^\infty(\Omega_R\times(0,T))$.
For each truncation vector $\mathbf{N}$,  truncation residual  $R^{\mathbf{N}}
= \bigl(R_{\mathbf{m}}^{\mathbf{N}}\bigr)_{\mathbf{m} \preceq \mathbf{N}}
\in L^1(0,T;\,\mathbb{R}^{M_{\mathbf{N}}})$, and
$$
  \int_0^T
  \bigl|R^{\mathbf{N}}(t)\bigr|_{\mathbb{R}^{M_{\mathbf{N}}}}\,dt
  \longrightarrow 0
  \quad
  \text{as } \min_{1 \le j \le d} N_j \to \infty.
$$
\end{Proposition}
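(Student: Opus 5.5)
The plan is to identify the residual $R^{\mathbf N}_{\mathbf m}(t)$ with the $\mathbf m$-th Legendre coefficient of a single function. That function is the differential operator applied to the truncation error $e^{\mathbf N}:=u-u^{\mathbf N}$. Bessel's inequality then bounds the whole vector $R^{\mathbf N}$ by norms of $e^{\mathbf N}$, and Proposition \ref{prop:legendre-convergence} sends these to zero. Concretely, I expect to show that for a.e.\ $t$,
\begin{equation*}
R^{\mathbf N}_{\mathbf m}(t)=\int_{\Omega_R}\Big(a(\mathbf x,t)\Delta e^{\mathbf N}(\mathbf x,t)-\mathbf b(\mathbf x,t)\cdot\nabla e^{\mathbf N}(\mathbf x,t)+\int_0^t\alpha(\mathbf x,t-\tau)\Delta e^{\mathbf N}(\mathbf x,\tau)\,d\tau\Big)\Phi_{\mathbf m}(\mathbf x)\,d\mathbf x .
\end{equation*}
Formally this holds because $e^{\mathbf N}=\sum_{\mathbf n\npreceq\mathbf N}u_{\mathbf n}\Phi_{\mathbf n}$, and because $C_{\mathbf m\mathbf n}$ and $E_{\mathbf m\mathbf n}$ are exactly the pairings of $a\Delta\Phi_{\mathbf n}-\mathbf b\cdot\nabla\Phi_{\mathbf n}$ and $\alpha\Delta\Phi_{\mathbf n}$ with $\Phi_{\mathbf m}$.

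The first step, which I expect to be the main technical point, is justifying the interchange of the infinite sum over $\mathbf n\npreceq\mathbf N$ with the spatial and memory integrals. I would fix a number $\mu$ with $2<\mu<s$. Proposition \ref{prop:legendre-convergence}, applied to the tail of the series, gives that the partial sums of $\sum_{\mathbf n\npreceq\mathbf N}u_{\mathbf n}\Phi_{\mathbf n}$ converge to $e^{\mathbf N}$ in $L^2(0,T;H^\mu(\Omega_R))$. Hence $\Delta$ and $\nabla$ of these partial sums converge in $L^2(0,T;L^2(\Omega_R))$. Since $a,\mathbf b\in L^\infty$ and $\Phi_{\mathbf m}\in L^2(\Omega_R)$ is fixed, the maps
\[
g\mapsto\int a\,\Delta g\,\Phi_{\mathbf m}\,d\mathbf x, \qquad g\mapsto\int \mathbf b\cdot\nabla g\,\Phi_{\mathbf m}\,d\mathbf x
\]
are continuous from $L^2(0,T;H^2)$ to $L^2(0,T)$. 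The memory map
\[
g\mapsto\int_0^t\!\int\alpha(\cdot,t-\tau)\Delta g(\tau)\Phi_{\mathbf m}\,d\mathbf x\,d\tau
\]
is continuous into $L^\infty(0,T)$, by Cauchy--Schwarz in $\mathbf x$ and then in $\tau$. So the series defining $R^{\mathbf N}_{\mathbf m}$ converges in $L^1(0,T)$, its sum is the displayed expression, and in particular $R^{\mathbf N}\in L^1(0,T;\mathbb R^{M_{\mathbf N}})$.

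Next, set $g^{\mathbf N}(t)$ to be the function in parentheses above. By orthonormality and Bessel's inequality,
\[
|R^{\mathbf N}(t)|_{\mathbb R^{M_{\mathbf N}}}=\Big(\sum_{\mathbf m\preceq\mathbf N}|\langle g^{\mathbf N}(t),\Phi_{\mathbf m}\rangle|^2\Big)^{1/2}\le\|g^{\mathbf N}(t)\|_{L^2(\Omega_R)} .
\]
This bound is uniform in the dimension $M_{\mathbf N}$, which is what makes the argument work. Minkowski's integral inequality then gives
\[
\|g^{\mathbf N}(t)\|\le\|a\|_\infty\|\Delta e^{\mathbf N}(t)\|+\|\mathbf b\|_\infty\|\nabla e^{\mathbf N}(t)\|+\|\alpha\|_\infty\int_0^T\|\Delta e^{\mathbf N}(\tau)\|\,d\tau .
\]

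Finally, I integrate over $(0,T)$ and apply Cauchy--Schwarz in time. This yields
\[
\int_0^T|R^{\mathbf N}(t)|\,dt\le C\,(1+T)\sqrt T\,\|e^{\mathbf N}\|_{L^2(0,T;H^2(\Omega_R))},
\]
with $C$ depending only on $\|a\|_\infty$, $\|\mathbf b\|_\infty$ and $\|\alpha\|_\infty$. The right-hand side tends to zero as $\min_j N_j\to\infty$ by Proposition \ref{prop:legendre-convergence} with $\mu=2<s$, which completes the argument.
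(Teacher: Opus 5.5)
Your proposal is correct and follows essentially the same route as the paper. You rewrite $R^{\mathbf N}_{\mathbf m}(t)$ as the $\mathbf m$-th Legendre coefficient of $a\Delta e^{\mathbf N}-\mathbf b\cdot\nabla e^{\mathbf N}+\int_0^t\alpha\,\Delta e^{\mathbf N}\,d\tau$, bound the coefficient vector by the $L^2(\Omega_R)$ norm via Bessel's inequality, and conclude with Cauchy--Schwarz in time and Proposition~\ref{prop:legendre-convergence}, obtaining the paper's constant $\gamma(\sqrt T+T^{3/2})$; your additional justification of the interchange of the tail series with the spatial and memory integrals, via convergence of rectangular partial sums in $L^2(0,T;H^2(\Omega_R))$, fills in a step the paper leaves implicit.
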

 
\begin{proof}

Let $P_N$ denote the $L^2(\Omega_R)$-orthogonal projection onto
\[
    \mathrm{span}\{\Phi_{\mathbf n}: \mathbf n\preceq N\}.
\]
Thus,
\[
    P_Nu(\mathbf x,t)=\sum_{\mathbf n\preceq N}u_{\mathbf n}(t)\Phi_{\mathbf n}(\mathbf x).
\]
Let
\[
    w^N(\mathbf x,t)=u(\mathbf x,t)-P_Nu(\mathbf x,t)
    =\sum_{\mathbf n\npreceq N}u_{\mathbf n}(t)\Phi_{\mathbf n}(\mathbf x)
\]
denote the projection error.  Using the definitions \eqref{Adef}--\eqref{Cdef},
$$
  R_{\mathbf{m}}^{\mathbf{N}}(t)
  = \int_{\Omega_R}
    \Bigl[
      a(\mathbf{x},t)\,\Delta w^{\mathbf{N}}(\mathbf{x},t)
      - \mathbf{b}(\mathbf{x},t)\cdot\nabla w^{\mathbf{N}}(\mathbf{x},t)
      + \int_0^t
        \alpha(\mathbf{x},t-\tau)
        \Delta w^{\mathbf{N}}(\mathbf{x},\tau)\,d\tau
    \Bigr]
    \Phi_{\mathbf{m}}(\mathbf{x})\,d\mathbf{x}.
$$
By Parseval's identity and the fact that orthogonal projection does
not increase the $L^2$-norm,
$$
  \bigl|R^{\mathbf{N}}(t)\bigr|_{\mathbb{R}^{M_{\mathbf{N}}}}
  \le
  \bigl\|
    a(\cdot,t)\,\Delta w^{\mathbf{N}}(\cdot,t)
    - \mathbf{b}(\cdot,t)\cdot\nabla w^{\mathbf{N}}(\cdot,t)
    + \int_0^t
      \alpha(\cdot,t-\tau)
      \Delta w^{\mathbf{N}}(\cdot,\tau)\,d\tau
  \bigr\|_{L^2(\Omega_R)}.
$$
Since $a$, $\mathbf{b}$, and $\alpha$ are essentially bounded, there exists
$\gamma > 0$, independent of $\mathbf{N}$, such that
$$
  \bigl|R^{\mathbf{N}}(t)\bigr|_{\mathbb{R}^{M_{\mathbf{N}}}}
  \le \gamma\,\|w^{\mathbf{N}}(\cdot,t)\|_{H^2(\Omega_R)}
  + \gamma\int_0^t
  \|w^{\mathbf{N}}(\cdot,\tau)\|_{H^2(\Omega_R)}\,d\tau,
  \quad \text{a.e. } t \in (0,T).
$$
Integrating in time and applying the Cauchy--Schwarz inequality,
$$
  \int_0^T
  \bigl|R^{\mathbf{N}}(t)\bigr|_{\mathbb{R}^{M_{\mathbf{N}}}}\,dt
  \le \gamma(\sqrt{T}+T^{3/2})\,
  \|u - P_{\mathbf{N}} u\|_{L^2(0,T;\,H^2(\Omega_R))}.
$$
Since $u \in L^2(0,T;\,H^s(\Omega_R))$ with $s > 2$,
Proposition~\ref{prop:legendre-convergence} gives
$P_{\mathbf{N}} u \to u$ in $L^2(0,T;\,H^2(\Omega_R))$,
from which the conclusion follows.
\end{proof}

\begin{Remark}
The assumption $u\in L^2(0,T;H^s(\Omega_R))$, $s>2$, in Proposition \ref{lem:residual-decay} is used only to justify the continuous Legendre truncation estimate. It is not a restriction of the finite-dimensional reconstruction algorithm. After projection, the method solves a Tikhonov-regularized least-squares problem and can be applied to nonsmooth data. The discontinuous examples in Section \ref{sec:numerics} are included to demonstrate this robustness beyond the smoothness class required for the theoretical residual estimate.
\end{Remark}

The terminal condition $u(\cdot,T)=u_T$ becomes
\begin{equation}
\label{terminal-full}
  u_{\mathbf{m}}(T) = u_{T, \mathbf{m}}
  :=
  \int_{\Omega_R}
  u_T(\mathbf{x})\Phi_{\mathbf{m}}(\mathbf{x})\,d\mathbf{x},
  \qquad \mathbf{m}\in\mathbb{N}^d.
\end{equation}
Thus, the backward inverse problem is transformed into an infinite-dimensional
terminal value problem with memory for the coefficient functions
$\{u_{\mathbf n}(t)\}_{\mathbf n\in\mathbb N^d}$.

Define
\[
    U^{\mathbf{N}}
    =
    \bigl(u_{\mathbf{m}}^{\mathbf{N}}\bigr)_{\mathbf{m}\preceq\mathbf{N}}.
\]
By \eqref{coef-system-memory1}, \eqref{terminal-full}, and Proposition
\ref{lem:residual-decay}, the vector $U^{\mathbf{N}}$ is approximately governed by the finite-dimensional terminal value system
\begin{equation}\label{reduced}
    \begin{cases}
    {u_{\mathbf{m}}}'(t)
    =
    \displaystyle\sum_{\mathbf{n}\preceq\mathbf{N}}
    C_{\mathbf{m}\mathbf{n}}(t)u_{\mathbf{n}}(t)
    +
    \displaystyle\sum_{\mathbf{n}\preceq\mathbf{N}}
    \int_0^t
    E_{\mathbf{m}\mathbf{n}}(t-\tau)u_{\mathbf{n}}(\tau)\,d\tau,
    & t\in(0,T),\\[0.2cm]
    u_{\mathbf{m}}(T)=u_{T,\mathbf{m}},
    &
    \mathbf{m}\preceq\mathbf{N}.
    \end{cases}
\end{equation}
We solve \eqref{reduced} by a Tikhonov-regularized least-squares method in the next section.

\section{The convergence of the Tikhonov method}
\label{sec:tikhonov-convergence}

We now study the convergence of the finite-dimensional Tikhonov reconstruction method. 
Throughout this section, the truncation vector $\mathbf{N}\in\mathbb{N}^d$ is fixed. 
For
\[
    V=(v_{\mathbf{m}})_{\mathbf{m}\preceq\mathbf{N}}
    \in H^2(0,T;\mathbb{R}^{M_{\mathbf{N}}}),
\]
define the linear residual operator
\[
    \mathcal L_{\mathbf{N}}V
    =
    \bigl((\mathcal L_{\mathbf{N}}V)_{\mathbf{m}}\bigr)_{\mathbf{m}\preceq\mathbf{N}},
\]
where
\[
    (\mathcal L_{\mathbf{N}}V)_{\mathbf{m}}(t)
    =
    v_{\mathbf{m}}'(t)
    -
    \sum_{\mathbf{n}\preceq\mathbf{N}}
    C_{\mathbf{m}\mathbf{n}}(t)v_{\mathbf{n}}(t)
    -
    \sum_{\mathbf{n}\preceq\mathbf{N}}
    \int_0^t
    E_{\mathbf{m}\mathbf{n}}(t-\tau)v_{\mathbf{n}}(\tau)\,d\tau,
    \qquad \mathbf{m}\preceq\mathbf{N}.
\]
Thus $\mathcal L_{\mathbf{N}}V$ measures the residual of the reduced integro-differential system. 
By the boundedness of the coefficients $a$, $\mathbf b$, and $\alpha$, and since the truncation vector $\mathbf N$ is fixed, the coefficient functions
$C_{\mathbf m\mathbf n}$ and $E_{\mathbf m\mathbf n}$ are bounded on $(0,T)$ for all $\mathbf m,\mathbf n\preceq \mathbf N$. 
Therefore, the operator
\[
    \mathcal L_{\mathbf{N}}:
    H^2(0,T;\mathbb{R}^{M_{\mathbf{N}}})
    \to
    L^2(0,T;\mathbb{R}^{M_{\mathbf{N}}})
\]
is bounded.

With this notation, the reduced terminal value system \eqref{reduced} can be written compactly as
\begin{equation}\label{compact-reduced}
    \mathcal L_{\mathbf{N}}U^{\mathbf{N}}=0,
    \qquad
    U^{\mathbf{N}}(T)=U_T^{\mathbf{N}},
\end{equation}
where
\[
    U_T^{\mathbf{N}}
    =
    (u_{T,\mathbf{m}})_{\mathbf{m}\preceq\mathbf{N}},
    \qquad
    u_{T,\mathbf{m}}
    =
    \int_{\Omega_R}u_T(\mathbf{x})\Phi_{\mathbf{m}}(\mathbf{x})\,d\mathbf{x}.
\]

In practice, the exact terminal data $u_T$ are not available. 
Instead, we are given noisy terminal data $u_T^\delta$. 
For each $\mathbf{m}\preceq\mathbf{N}$, define
\[
    u_{T,\delta,\mathbf{m}}
    =
    \int_{\Omega_R}u_T^\delta(\mathbf{x})\Phi_{\mathbf{m}}(\mathbf{x})\,d\mathbf{x},
\]
and set
\[
    U_{T,\delta}^{\mathbf{N}}
    =
    (u_{T,\delta,\mathbf{m}})_{\mathbf{m}\preceq\mathbf{N}}.
\]
We assume that the noise level satisfies
\begin{equation}\label{noise-terminal}
    \|U_{T,\delta}^{\mathbf{N}}-U_T^{\mathbf{N}}\|_{\mathbb{R}^{M_{\mathbf{N}}}}
    \leq \delta.
\end{equation}

Since the terminal data are prescribed, we impose the terminal condition as an exact constraint. 
For each $\delta>0$, define the affine space
\[
    \mathcal H_{\mathbf{N},\delta}
    =
    \left\{
    U\in H^2(0,T;\mathbb{R}^{M_{\mathbf{N}}})
    :
    U(T)=U_{T,\delta}^{\mathbf{N}}
    \right\}.
\]
For $\epsilon>0$, define the Tikhonov functional
\begin{equation}\label{tikhonov-functional}
    J_{\mathbf{N},\delta,\epsilon}(U)
    =
    \|\mathcal L_{\mathbf{N}}U\|_{L^2(0,T;\mathbb{R}^{M_{\mathbf{N}}})}^2
    +
    \epsilon
    \|U\|_{H^2(0,T;\mathbb{R}^{M_{\mathbf{N}}})}^2,
    \qquad
    U\in\mathcal H_{\mathbf{N},\delta}.
\end{equation}
The terminal condition is therefore enforced exactly in the admissible space, while the differential equation is imposed in the least-squares sense.

Since $J_{\mathbf{N},\delta,\epsilon}$ is strictly convex, coercive, and weakly lower semicontinuous on the closed affine subspace $\mathcal H_{\mathbf{N},\delta}$, it admits a unique minimizer. 
We denote this minimizer by
\[
    U_{\mathbf{N},\delta,\epsilon}
    =
    \operatorname*{arg\,min}_{U\in\mathcal H_{\mathbf{N},\delta}}
    J_{\mathbf{N},\delta,\epsilon}(U).
\]

\begin{Theorem}[Convergence of the Tikhonov method]
\label{thm:tikhonov-convergence}
Let $\mathbf{N}\in\mathbb{N}^d$ be fixed. 
Define the exact solution set
\[
    \mathcal S_{\mathbf{N}}
    =
    \left\{
    U\in H^2(0,T;\mathbb{R}^{M_{\mathbf{N}}}):
    \mathcal L_{\mathbf{N}}U=0
    \text{ in } L^2(0,T;\mathbb{R}^{M_{\mathbf{N}}}),
    \quad
    U(T)=U_T^{\mathbf{N}}
    \right\}.
\]
Assume that $\mathcal S_{\mathbf{N}}\neq\emptyset$. 
Let $U_*^{\mathbf{N}}$ be the minimum-norm element of $\mathcal S_{\mathbf{N}}$ in
$H^2(0,T;\mathbb{R}^{M_{\mathbf{N}}})$, namely
\[
    U_*^{\mathbf{N}}
    =
    \operatorname*{arg\,min}_{U\in\mathcal S_{\mathbf{N}}}
    \|U\|_{H^2(0,T;\mathbb{R}^{M_{\mathbf{N}}})}.
\]
If
\[
    \delta\to 0,
    \qquad
    \epsilon\to 0,
    \qquad
    \frac{\delta^2}{\epsilon}\to 0,
\]
then
\[
    U_{\mathbf{N},\delta,\epsilon}
    \to
    U_*^{\mathbf{N}}
    \quad
    \text{strongly in }
    H^2(0,T;\mathbb{R}^{M_{\mathbf{N}}}).
\]
\end{Theorem}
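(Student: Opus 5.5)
The plan follows the classical Tikhonov convergence argument, adapted to the fact that the admissible set $\mathcal H_{\mathbf N,\delta}$ moves with $\delta$. The first step is to build, for each $\delta$, a competitor in $\mathcal H_{\mathbf N,\delta}$ that lies close to $U_*^{\mathbf N}$. Fix a smooth scalar function $\chi$ on $[0,T]$ with $\chi(T)=1$, for instance $\chi(t)=t/T$. Set
\[
    W_\delta = U_*^{\mathbf N} + \chi\,\bigl(U_{T,\delta}^{\mathbf N}-U_T^{\mathbf N}\bigr).
\]
Then $W_\delta\in\mathcal H_{\mathbf N,\delta}$ and $\|W_\delta-U_*^{\mathbf N}\|_{H^2}\le c\delta$. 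Since $\mathcal L_{\mathbf N}U_*^{\mathbf N}=0$ and $\mathcal L_{\mathbf N}$ is bounded from $H^2$ to $L^2$, we also get $\|\mathcal L_{\mathbf N}W_\delta\|_{L^2}\le c\delta$. Minimality of $U_{\delta,\epsilon}:=U_{\mathbf N,\delta,\epsilon}$ then gives
\[
    \|\mathcal L_{\mathbf N}U_{\delta,\epsilon}\|_{L^2}^2+\epsilon\|U_{\delta,\epsilon}\|_{H^2}^2
    \le c^2\delta^2+\epsilon\bigl(\|U_*^{\mathbf N}\|_{H^2}+c\delta\bigr)^2 .
\]
This yields two consequences. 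First, $\|\mathcal L_{\mathbf N}U_{\delta,\epsilon}\|_{L^2}\to0$ because $\delta,\epsilon\to0$. Second,
\[
    \|U_{\delta,\epsilon}\|_{H^2}^2\le c^2\delta^2/\epsilon+(\|U_*^{\mathbf N}\|_{H^2}+c\delta)^2,
\]
so $\limsup\|U_{\delta,\epsilon}\|_{H^2}\le\|U_*^{\mathbf N}\|_{H^2}$, using $\delta^2/\epsilon\to0$.

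The second step is weak compactness and identification of the limit. Take any sequence $(\delta_k,\epsilon_k)$ tending to zero as in the hypothesis. The family $U_k:=U_{\delta_k,\epsilon_k}$ is bounded in $H^2$, so a subsequence converges weakly in $H^2$ to some $\bar U$. The trace map $U\mapsto U(T)$ is continuous on $H^2(0,T)$, since $H^2\hookrightarrow C^1$, and it is linear, so it is weakly continuous. Hence $\bar U(T)=\lim U_k(T)=\lim U^{\mathbf N}_{T,\delta_k}=U_T^{\mathbf N}$. The operator $\mathcal L_{\mathbf N}$ is bounded and linear, so it is weakly continuous into $L^2$. Therefore $\mathcal L_{\mathbf N}U_k\rightharpoonup\mathcal L_{\mathbf N}\bar U$, and weak lower semicontinuity of the norm together with the first step gives $\mathcal L_{\mathbf N}\bar U=0$. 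Thus $\bar U\in\mathcal S_{\mathbf N}$. Weak lower semicontinuity of the $H^2$ norm combined with the limsup bound gives
\[
    \|\bar U\|_{H^2}\le\liminf\|U_k\|_{H^2}\le\limsup\|U_k\|_{H^2}\le\|U_*^{\mathbf N}\|_{H^2}.
\]

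The third step is uniqueness of the minimum-norm element, which identifies $\bar U$ with $U_*^{\mathbf N}$. The set $\mathcal S_{\mathbf N}$ is nonempty by assumption. It is affine, being the preimage of $(0,U_T^{\mathbf N})$ under a bounded linear map, and it is closed. A nonempty closed convex subset of a Hilbert space has a unique element of minimal norm. Hence $\bar U=U_*^{\mathbf N}$, and consequently $\|U_k\|_{H^2}\to\|U_*^{\mathbf N}\|_{H^2}$. In a Hilbert space, weak convergence together with convergence of norms gives strong convergence. Every subsequence has a further subsequence converging strongly to the same limit $U_*^{\mathbf N}$, so the whole family converges strongly.

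The main obstacle is the first step. Since $\mathcal H_{\mathbf N,\delta}$ depends on $\delta$, $U_*^{\mathbf N}$ itself is not an admissible competitor, and the correction $W_\delta$ is what makes the $\delta^2/\epsilon$ condition appear exactly as in the hypothesis. The remaining care is to check that the trace at $T$ is weakly continuous on $H^2(0,T)$. This holds because the trace is a bounded linear functional, which in turn follows from the embedding $H^2\hookrightarrow C^1$.
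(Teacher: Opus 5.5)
Your proposal is correct and follows the same route as the paper. Both proofs use a lifted competitor $U_*^{\mathbf N}+\chi\,(U_{T,\delta}^{\mathbf N}-U_T^{\mathbf N})\in\mathcal H_{\mathbf N,\delta}$, derive the same a priori bound, extract a weakly convergent subsequence, identify the limit as $U_*^{\mathbf N}$ by the minimum-norm property, and finish with weak convergence plus convergence of norms. The differences are only cosmetic: you take $\chi(t)=t/T$ where the paper uses the constant lifting $P_\delta\equiv\eta_\delta$, and you justify uniqueness of the minimum-norm element from the closed affine structure of $\mathcal S_{\mathbf N}$, which the paper takes for granted.
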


\begin{proof}
Set
\[
    \eta_\delta
    =
    U_{T,\delta}^{\mathbf{N}}-U_T^{\mathbf{N}}.
\]
By \eqref{noise-terminal},
\[
    |\eta_\delta|_{\mathbb{R}^{M_{\mathbf{N}}}}\leq \delta.
\]
Define the constant-in-time lifting
\[
    P_\delta(t)=\eta_\delta,
    \qquad 0\leq t\leq T.
\]
Then
\[
    P_\delta(T)=\eta_\delta,
    \qquad
    \|P_\delta\|_{H^2(0,T;\mathbb{R}^{M_{\mathbf{N}}})}
    \leq C\delta.
\]
Since $\mathcal L_{\mathbf{N}}$ is bounded, we also have
\[
    \|\mathcal L_{\mathbf{N}}P_\delta\|_{L^2(0,T;\mathbb{R}^{M_{\mathbf{N}}})}
    \leq C\delta.
\]

Because $U_*^{\mathbf{N}}(T)=U_T^{\mathbf{N}}$, the function
\[
    U_*^{\mathbf{N}}+P_\delta
\]
belongs to $\mathcal H_{\mathbf{N},\delta}$. 
Therefore, by the minimizing property of $U_{\mathbf{N},\delta,\epsilon}$,
\[
    J_{\mathbf{N},\delta,\epsilon}(U_{\mathbf{N},\delta,\epsilon})
    \leq
    J_{\mathbf{N},\delta,\epsilon}(U_*^{\mathbf{N}}+P_\delta).
\]
Since $\mathcal L_{\mathbf{N}}U_*^{\mathbf{N}}=0$, we obtain
\[
\begin{aligned}
    J_{\mathbf{N},\delta,\epsilon}(U_*^{\mathbf{N}}+P_\delta)
    &=
    \|\mathcal L_{\mathbf{N}}P_\delta\|_{L^2(0,T;\mathbb{R}^{M_{\mathbf{N}}})}^2
    +
    \epsilon
    \|U_*^{\mathbf{N}}+P_\delta\|_{H^2(0,T;\mathbb{R}^{M_{\mathbf{N}}})}^2    \\
    &\leq
    C\delta^2
    +
    \epsilon
    \left(
        \|U_*^{\mathbf{N}}\|_{H^2(0,T;\mathbb{R}^{M_{\mathbf{N}}})}
        +
        C\delta
    \right)^2.
\end{aligned}
\]
Thus
\begin{equation}\label{ineq:terminal-bound}
\begin{aligned}
    &\|\mathcal L_{\mathbf{N}}U_{\mathbf{N},\delta,\epsilon}\|_{L^2(0,T;\mathbb{R}^{M_{\mathbf{N}}})}^2
    +
    \epsilon
    \|U_{\mathbf{N},\delta,\epsilon}\|_{H^2(0,T;\mathbb{R}^{M_{\mathbf{N}}})}^2    \\
    &\qquad\leq
    C\delta^2
    +
    \epsilon
    \left(
        \|U_*^{\mathbf{N}}\|_{H^2(0,T;\mathbb{R}^{M_{\mathbf{N}}})}
        +
        C\delta
    \right)^2 .
\end{aligned}
\end{equation}
In particular,
\[
    \|U_{\mathbf{N},\delta,\epsilon}\|_{H^2(0,T;\mathbb{R}^{M_{\mathbf{N}}})}^2
    \leq
    C\frac{\delta^2}{\epsilon}
    +
    \left(
        \|U_*^{\mathbf{N}}\|_{H^2(0,T;\mathbb{R}^{M_{\mathbf{N}}})}
        +
        C\delta
    \right)^2 .
\]
Since $\delta^2/\epsilon\to 0$, the family
$\{U_{\mathbf{N},\delta,\epsilon}\}$ is bounded in
$H^2(0,T;\mathbb{R}^{M_{\mathbf{N}}})$.

Let $(\delta_k,\epsilon_k)$ be any sequence such that
\[
    \delta_k\to 0,
    \qquad
    \epsilon_k\to 0,
    \qquad
    \frac{\delta_k^2}{\epsilon_k}\to 0.
\]
By weak compactness, there exist a subsequence, still denoted by
$U_{\mathbf{N},\delta_k,\epsilon_k}$, and a function
$Z\in H^2(0,T;\mathbb{R}^{M_{\mathbf{N}}})$ such that
\[
    U_{\mathbf{N},\delta_k,\epsilon_k}
    \rightharpoonup Z
    \quad
    \text{weakly in }
    H^2(0,T;\mathbb{R}^{M_{\mathbf{N}}}).
\]
Since $\mathcal L_{\mathbf{N}}$ is linear and bounded,
\[
    \mathcal L_{\mathbf{N}}U_{\mathbf{N},\delta_k,\epsilon_k}
    \rightharpoonup
    \mathcal L_{\mathbf{N}}Z
    \quad
    \text{weakly in }
    L^2(0,T;\mathbb{R}^{M_{\mathbf{N}}}).
\]
On the other hand, \eqref{ineq:terminal-bound} gives
\[
    \|\mathcal L_{\mathbf{N}}U_{\mathbf{N},\delta_k,\epsilon_k}\|_{L^2(0,T;\mathbb{R}^{M_{\mathbf{N}}})}^2
    \leq
    C\delta_k^2
    +
    \epsilon_k
    \left(
        \|U_*^{\mathbf{N}}\|_{H^2(0,T;\mathbb{R}^{M_{\mathbf{N}}})}
        +
        C\delta_k
    \right)^2
    \to 0.
\]
Hence
\[
    \mathcal L_{\mathbf{N}}Z=0
    \quad
    \text{in }
    L^2(0,T;\mathbb{R}^{M_{\mathbf{N}}}).
\]

Since $U_{\mathbf{N},\delta_k,\epsilon_k}\in\mathcal H_{\mathbf{N},\delta_k}$, we have
\[
    U_{\mathbf{N},\delta_k,\epsilon_k}(T)
    =
    U_{T,\delta_k}^{\mathbf{N}}.
\]
By \eqref{noise-terminal},
\[
    U_{T,\delta_k}^{\mathbf{N}}\to U_T^{\mathbf{N}}
    \quad
    \text{in }
    \mathbb{R}^{M_{\mathbf{N}}}.
\]
The trace map $U\mapsto U(T)$ is continuous from
$H^2(0,T;\mathbb{R}^{M_{\mathbf{N}}})$ to $\mathbb{R}^{M_{\mathbf{N}}}$; hence, passing to the weak limit gives
\[
    Z(T)=U_T^{\mathbf{N}}.
\]
Therefore
\[
    Z\in\mathcal S_{\mathbf{N}}.
\]

By weak lower semicontinuity and \eqref{ineq:terminal-bound},
\[
\begin{aligned}
    \|Z\|_{H^2(0,T;\mathbb{R}^{M_{\mathbf{N}}})}
    &\leq
    \liminf_{k\to\infty}
    \|U_{\mathbf{N},\delta_k,\epsilon_k}\|_{H^2(0,T;\mathbb{R}^{M_{\mathbf{N}}})}  \\
    &\leq
    \limsup_{k\to\infty}
    \|U_{\mathbf{N},\delta_k,\epsilon_k}\|_{H^2(0,T;\mathbb{R}^{M_{\mathbf{N}}})}  \\
    &\leq
    \|U_*^{\mathbf{N}}\|_{H^2(0,T;\mathbb{R}^{M_{\mathbf{N}}})}.
\end{aligned}
\]
Since $Z\in\mathcal S_{\mathbf{N}}$ and $U_*^{\mathbf{N}}$ is the minimum-norm element of $\mathcal S_{\mathbf{N}}$, we obtain
\[
    Z=U_*^{\mathbf{N}}.
\]
Thus every weakly convergent subsequence has the same weak limit $U_*^{\mathbf{N}}$, and therefore
\[
    U_{\mathbf{N},\delta,\epsilon}
    \rightharpoonup
    U_*^{\mathbf{N}}
    \quad
    \text{weakly in }
    H^2(0,T;\mathbb{R}^{M_{\mathbf{N}}}).
\]

Finally, \eqref{ineq:terminal-bound} gives
\[
    \limsup_{\delta,\epsilon\to 0}
    \|U_{\mathbf{N},\delta,\epsilon}\|_{H^2(0,T;\mathbb{R}^{M_{\mathbf{N}}})}
    \leq
    \|U_*^{\mathbf{N}}\|_{H^2(0,T;\mathbb{R}^{M_{\mathbf{N}}})}.
\]
Together with weak lower semicontinuity, this implies
\[
    \lim_{\delta,\epsilon\to 0}
    \|U_{\mathbf{N},\delta,\epsilon}\|_{H^2(0,T;\mathbb{R}^{M_{\mathbf{N}}})}
    =
    \|U_*^{\mathbf{N}}\|_{H^2(0,T;\mathbb{R}^{M_{\mathbf{N}}})}.
\]
Weak convergence together with convergence of norms in the Hilbert space
$H^2(0,T;\mathbb{R}^{M_{\mathbf{N}}})$ implies strong convergence. Therefore,
\[
    U_{\mathbf{N},\delta,\epsilon}
    \to
    U_*^{\mathbf{N}}
    \quad
    \text{strongly in }
    H^2(0,T;\mathbb{R}^{M_{\mathbf{N}}}).
\]
The proof is complete.
\end{proof}

\begin{Remark}
The fixed truncation order $N$ in Theorem \ref{thm:tikhonov-convergence} is not merely a technical assumption. It is part of the regularization mechanism. As illustrated by the instability example in the introduction, the severe ill-posedness of the backward problem is caused by the amplification of highly oscillatory components in the terminal data. Therefore, allowing $N\to\infty$ without additional regularization would reintroduce increasingly high spatial modes and would move the reduced problem toward the full backward problem, which cannot be solved stably from noisy terminal data. For this reason, the study of the limit $N\to\infty$ in the inverse reconstruction is outside the scope of the present paper. In the proposed method, $N$ is kept finite so that the Legendre truncation filters out the high-frequency components responsible for the instability.
\end{Remark}

\section{Numerical studies}
\label{sec:numerics}

In this section, we present numerical experiments to test the performance of the proposed Legendre spatial dimensional reduction method combined with Tikhonov regularization. 
The experiments are carried out in two spatial dimensions. 
The first step is to solve a forward problem in order to generate synthetic terminal data. 
The reconstructed initial condition is then computed from the noisy terminal data by the proposed inverse solver.

\subsection{Generation of synthetic data by a forward solver}

The synthetic data used in the numerical experiments are generated by solving the forward problem on a large square domain
\[
    \Omega_G=(-G,G)^2
\]
in order to approximate the whole-space model posed on $\mathbb{R}^2$. 
After the forward solution is computed, the terminal data are restricted to the smaller computational window
\[
    \Omega_R=(-R,R)^2,
    \qquad R<G,
\]
where the inverse reconstruction is performed.

In the numerical tests, we use
\[
    G=15,\qquad R=8,\qquad \Delta x=\Delta y=0.1,
\]
and
\[
    T=1,\qquad \Delta t=10^{-4}.
\]
Thus the forward grid is defined on $\Omega_G$, while the inverse grid is obtained by selecting the grid points satisfying
\[
    |x|\leq R,\qquad |y|\leq R.
\]

We generate the terminal data by solving the two-dimensional equation
\begin{multline}\label{eq:numerical-forward-model}
    u_t(x,y,t)
    +
    b_x(x,y,t)u_x(x,y,t)
    +
    b_y(x,y,t)u_y(x,y,t)
    \\
    =
    a(x,y,t)\Delta u(x,y,t)
    +
    \int_0^t
    \alpha(x,y,t-\tau)\Delta u(x,y,\tau)\,d\tau .
\end{multline}
Here $a$, $b_x$, $b_y$, and $\alpha$ are prescribed coefficient functions. 
The function $u(x,y,0)=u_0(x,y)$ is the exact initial condition. These functions will be specified later in each numerical test.

Let $u_{i,j}^k$ denote the numerical approximation of $u(x_i,y_j,t_k)$, where
\[
    t_k=k\Delta t.
\]
The Laplacian is approximated by the standard central difference formula
\[
    \Delta_h u_{i,j}^k
    =
    \frac{u_{i+1,j}^k-2u_{i,j}^k+u_{i-1,j}^k}{\Delta x^2}
    +
    \frac{u_{i,j+1}^k-2u_{i,j}^k+u_{i,j-1}^k}{\Delta y^2}.
\]
The convection terms are approximated by an upwind scheme. For example,
\[
    (D_x^{\rm up}u)_{i,j}^k
    =
    \begin{cases}
    \displaystyle
    \frac{u_{i,j}^k-u_{i-1,j}^k}{\Delta x},
    & b_x(x_i,y_j,t_k)\geq 0,\\[0.3cm]
    \displaystyle
    \frac{u_{i+1,j}^k-u_{i,j}^k}{\Delta x},
    & b_x(x_i,y_j,t_k)<0.
    \end{cases}
\]
The derivative $(D_y^{\rm up}u)_{i,j}^k$ is defined analogously using the sign of $b_y(x_i,y_j,t_k)$.

The memory term is evaluated explicitly by a left Riemann sum. At time $t_k$, we set
\[
    M_{i,j}^k
    =
    \Delta t
    \sum_{\ell=0}^{k-1}
    \alpha(x_i,y_j,t_k-t_\ell)\Delta_h u_{i,j}^{\ell}.
\]
Thus $M_{i,j}^0=0$, and only previously computed time levels are used. The explicit update is then
\[
\begin{aligned}
    u_{i,j}^{k+1}
    =
    u_{i,j}^{k}
    +
    \Delta t
    \big[
        &a(x_i,y_j,t_k)\Delta_h u_{i,j}^{k}
        +
        M_{i,j}^{k}  \\
        &-
        b_x(x_i,y_j,t_k)(D_x^{\rm up}u)_{i,j}^{k}
        -
        b_y(x_i,y_j,t_k)(D_y^{\rm up}u)_{i,j}^{k}
    \big].
\end{aligned}
\]
After each time step, the boundary values on $\partial\Omega_G$ are assigned by linear extrapolation from the interior grid values. For example,
\[
    u_{1,j}^{k+1}=2u_{2,j}^{k+1}-u_{3,j}^{k+1},
    \qquad
    u_{N_y,j}^{k+1}=2u_{N_y-1,j}^{k+1}-u_{N_y-2,j}^{k+1},
\]
and the vertical boundaries are treated in the same way.

After solving \eqref{eq:numerical-forward-model} up to time $T$, we define the exact terminal data on the inverse domain $\Omega_R$ by
\[
    u_T^*(x,y)=u(x,y,T),
    \qquad (x,y)\in\Omega_R.
\]

To simulate measurement noise, we perturb the terminal data by
\[
    u_T^\delta(x,y)
    =
    u_T^*(x,y)
    \left(
        1+\frac{\delta}{100}\eta(x,y)
    \right),
\]
where $\delta$ is the noise level in percent and $\eta(x,y)$ is uniformly distributed in $[-1,1]$. 
The noisy terminal data $u_T^\delta$ are then used as input for the inverse reconstruction.

\subsection{Implementation of the inverse solver}

We now describe the implementation of the inverse reconstruction on the domain
$
    \Omega_R=(-R,R)^2.
$
Let
$
    \mathbf N=(N,N)
$
be the truncation order. In the numerical tests below, we use
$
    N=15.
$
We first construct the tensor-product Legendre basis
$
    \{\Phi_{\mathbf m}\}_{\mathbf m\preceq \mathbf N}
$
together with its spatial derivatives
\[
    \partial_x\Phi_{\mathbf m},\qquad
    \partial_y\Phi_{\mathbf m},\qquad
    \Delta\Phi_{\mathbf m}.
\]
The noisy terminal data $u_T^\delta$ are projected onto this basis to obtain
\[
    u_{T,\delta,\mathbf m}
    =
    \int_{\Omega_R}
    u_T^\delta(x,y)\Phi_{\mathbf m}(x,y)\,dx\,dy,
    \qquad \mathbf m\preceq\mathbf N.
\]
We denote the corresponding coefficient vector by
\[
    U_{T,\delta}^{\mathbf N}
    =
    (u_{T,\delta,\mathbf m})_{\mathbf m\preceq\mathbf N}.
\]

The inverse problem is solved on the time grid
\[
    0=t_0<t_1<\cdots<t_K=T,
    \qquad t_k=k\Delta t_{\rm inv},
\]
with
\[
    \Delta t_{\rm inv}=10^{-2}.
\]
Let $U^k\in\mathbb R^{M_{\mathbf N}}$ be the vector of Legendre coefficients at time $t_k$. 
The terminal condition is imposed exactly by setting
\[
    U^K=U_{T,\delta}^{\mathbf N}.
\]
Thus, the unknowns are only
\[
    U^0,U^1,\dots,U^{K-1}.
\]

At each time level, we assemble the reduced matrices
$
    C^k=\bigl(C_{\mathbf m\mathbf n}(t_k)\bigr)_{\mathbf m,\mathbf n\preceq\mathbf N},
$
and the memory matrices
$
    E^\ell=\bigl(E_{\mathbf m\mathbf n}(t_\ell)\bigr)_{\mathbf m,\mathbf n\preceq\mathbf N}.
$
The discrete residual is
\[
    \mathcal R^k(U)
    =
    \frac{U^{k+1}-U^k}{\Delta t_{\rm inv}}
    -
    C^kU^k
    -
    \Delta t_{\rm inv}
    \sum_{j=0}^{k-1}
    E^{k-j}U^j,
    \qquad k=0,\dots,K-1.
\]
We minimize the discrete Tikhonov functional
\[
    J_h(U)
    =
    \Delta t_{\rm inv}
    \sum_{k=0}^{K-1}|\mathcal R^k(U)|^2
    +
    \epsilon \|U\|_{H_h^2}^2
\]
subject to the terminal constraint $U^K=U_{T,\delta}^{\mathbf N}$, where
\[
\begin{aligned}
    \|U\|_{H_h^2}^2
    =
    &\Delta t_{\rm inv}\sum_{k=0}^K |U^k|^2
    +
    \frac{1}{\Delta t_{\rm inv}}
    \sum_{k=0}^{K-1}|U^{k+1}-U^k|^2  \\
    &+
    \frac{1}{\Delta t_{\rm inv}^3}
    \sum_{k=0}^{K-2}
    |U^{k+2}-2U^{k+1}+U^k|^2 .
\end{aligned}
\]
In computation, we use
$
    \epsilon=10^{-5}.
$
The resulting linear least-squares problem is solved by MATLAB's \texttt{lsqr} routine with tolerance $10^{-8}$ and maximum number of iterations $300$.

\begin{Remark}[Choice of the truncation and regularization parameters]
In the numerical experiments, the truncation order $N$ and the regularization parameter $\epsilon$ were chosen empirically by trial and error. 
We used Test 1 as a reference test and selected values that produced a stable reconstruction with a good balance between noise suppression and preservation of the main shape of the target. 
After these values were selected, they were kept fixed for all subsequent tests. 
In particular, we used
\[
    N=15,
    \qquad
    \epsilon=10^{-5}
\]
throughout the numerical experiments. 
This choice avoids tuning the truncation and regularization parameters separately for each example and provides a consistent test of the proposed reconstruction method.
\end{Remark}

After computing the coefficient sequence $U_{\rm rec}^0,\dots,U_{\rm rec}^K$, we reconstruct
\[
    u_{\rm rec}^{\mathbf N}(x,y,t_k)
    =
    \sum_{\mathbf m\preceq\mathbf N}
    (U_{\rm rec}^k)_{\mathbf m}\Phi_{\mathbf m}(x,y).
\]
The reconstructed initial condition is
\[
    u_0^{\rm rec}(x,y)=u_{\rm rec}^{\mathbf N}(x,y,0).
\]

For clarity, we summarize the implementation of the inverse reconstruction in Algorithm \ref{alg:inverse-implementation}.
\begin{algorithm}[H]
\caption{Inverse reconstruction}
\label{alg:inverse-implementation}
\begin{algorithmic}[1]
\State Construct the Legendre basis $\{\Phi_{\mathbf m}\}_{\mathbf m\preceq\mathbf N}$ on $\Omega_R$.
\State Compute the terminal coefficients $U_{T,\delta}^{\mathbf N}$ from $u_T^\delta$.
\State Assemble the reduced matrices $C^k$ and $E^\ell$.
\State Impose the terminal constraint $U^K=U_{T,\delta}^{\mathbf N}$.
\State Minimize
\[
    J_h(U)
    =
    \Delta t_{\rm inv}\sum_{k=0}^{K-1}|\mathcal R^k(U)|^2
    +
    \epsilon\|U\|_{H_h^2}^2
\]
over $U^0,\dots,U^{K-1}$ by \texttt{lsqr}.
\State Reconstruct
\[
    u_{\rm rec}^{\mathbf N}(x,y,t_k)
    =
    \sum_{\mathbf m\preceq\mathbf N}
    (U_{\rm rec}^k)_{\mathbf m}\Phi_{\mathbf m}(x,y).
\]
\State Output $u_0^{\rm rec}(x,y)=u_{\rm rec}^{\mathbf N}(x,y,0)$.
\end{algorithmic}
\end{algorithm}

\subsection{Numerical examples}
\label{subsec:numerical-examples}

We now present several numerical examples to demonstrate the performance of the proposed reconstruction method. 
In each test, we prescribe the coefficient functions $a$, $b_x$, $b_y$, the memory kernel $\alpha$, and the exact initial condition $u_0$. 
The quality of the reconstruction is evaluated by comparing the reconstructed initial condition $u_0^{\rm rec}$ with the exact initial condition $u_0$ on $\Omega_R$.

\textbf{Test 1.} 
In the first test, the exact initial condition is a circular inclusion centered at $(2, -2)$. 
More precisely, we set
\[
    u_0(x,y)
    =
    \begin{cases}
    1, & (x-2)^2+(y+2)^2\leq 2^2,\\
    0, & \mbox{otherwise}.
    \end{cases}
\]
This test evaluates whether the proposed method can recover a simple, localized, discontinuous profile from noisy terminal data.

The coefficient functions are chosen as
\[
    a(x,y,t)
    =
    \left[
    1
    +
    0.2\sin\left(\frac{\pi x}{15}\right)
    \cos\left(\frac{\pi y}{15}\right)
    \right]
    \left[
    1+0.12\sin(2\pi t)
    \right].
\]
The convection field is given by
\[
    b_x(x,y,t)
    =
    0.5\cos\left(\frac{\pi x}{15}\right)
    \left[
    1+0.20\sin\left(2\pi t+\frac{\pi}{4}\right)
    \right],
\]
and
\[
    b_y(x,y,t)
    =
    0.5\sin\left(\frac{\pi y}{15}\right)
    \left[
    1+0.20\sin\left(2\pi t+\frac{\pi}{2}\right)
    \right].
\]
The memory kernel depends on the lag variable $s=t-\tau$ and is defined by
\[
    \alpha(x,y,s)
    =
    \left[
    0.15
    +
    0.05\cos\left(\frac{\pi x}{15}\right)
    \cos\left(\frac{\pi y}{15}\right)
    \right]
    e^{-s}
    \left[
    1+0.25\cos(2\pi s)
    \right],
    \qquad s\geq 0.
\]

\begin{figure}[H]
    \subfloat[Exact initial condition $u_0$]{
        \includegraphics[width=0.31\textwidth]{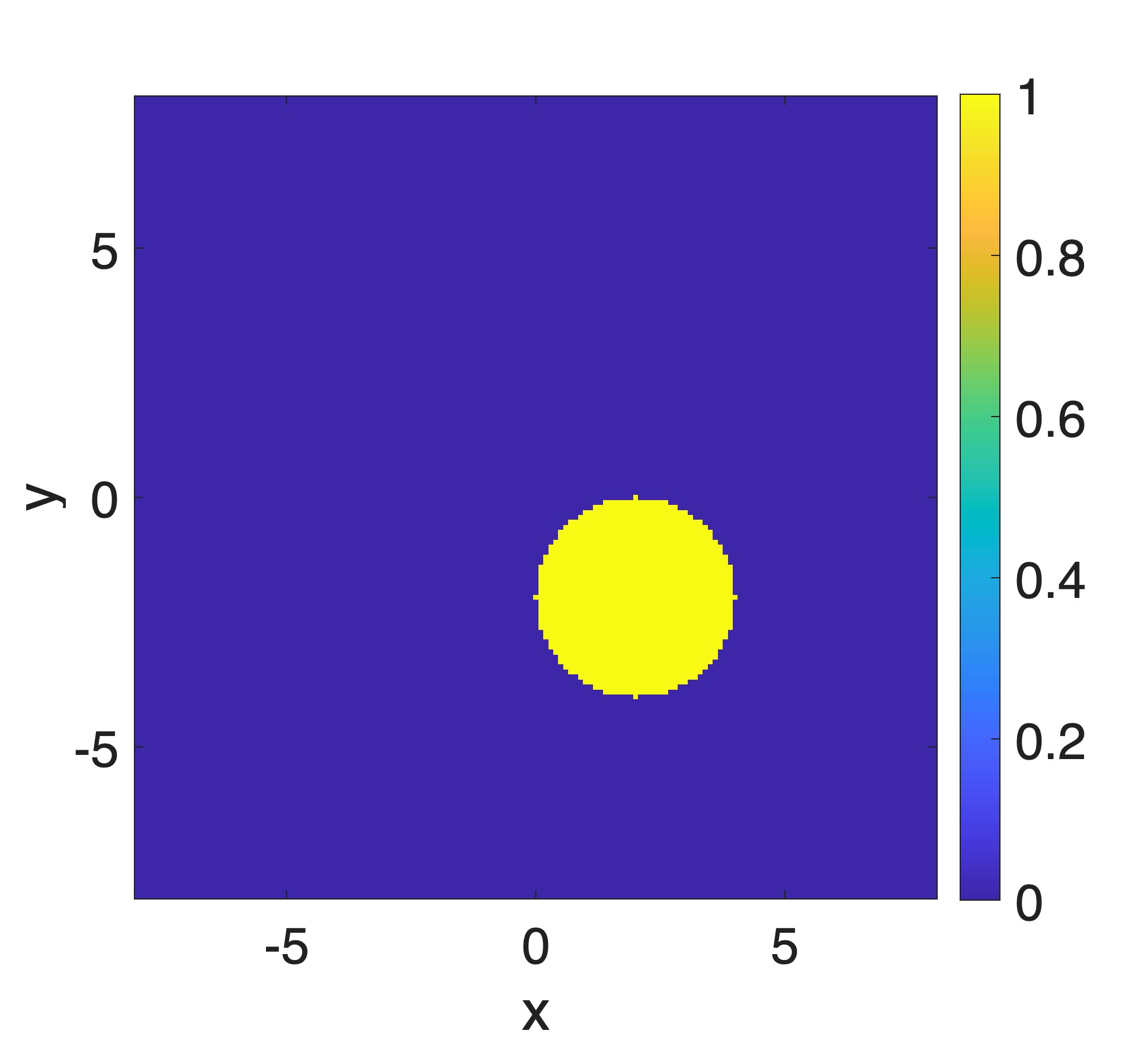}
    }
    \hfill
    \subfloat[Reconstructed initial condition $u_0^{\rm rec}$, $10\%$ noise]{
        \includegraphics[width=0.31\textwidth]{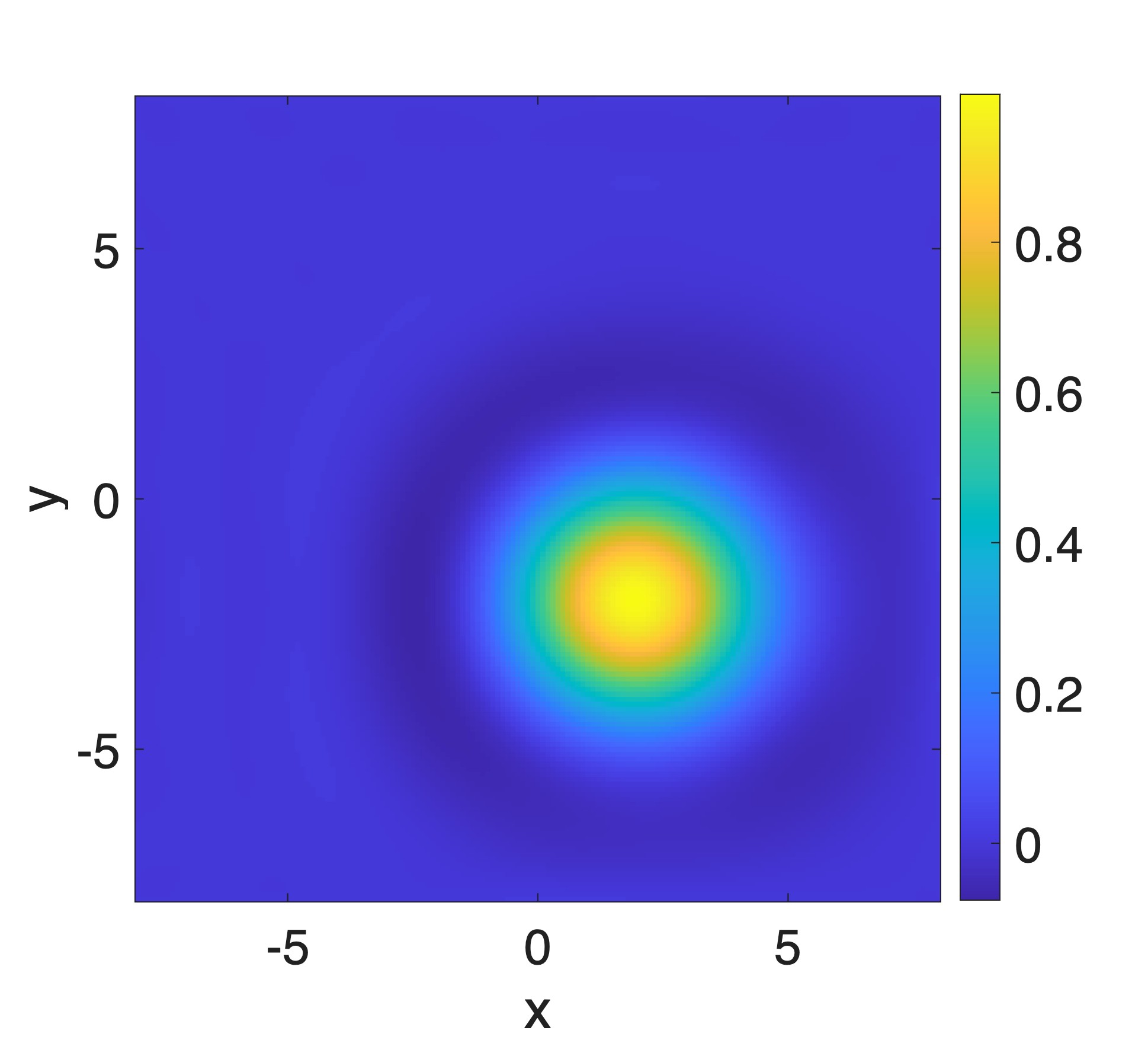}
    }
    \hfill
    \subfloat[Absolute error $|u_0^{\rm rec}-u_0|$, $10\%$ noise]{
        \includegraphics[width=0.31\textwidth]{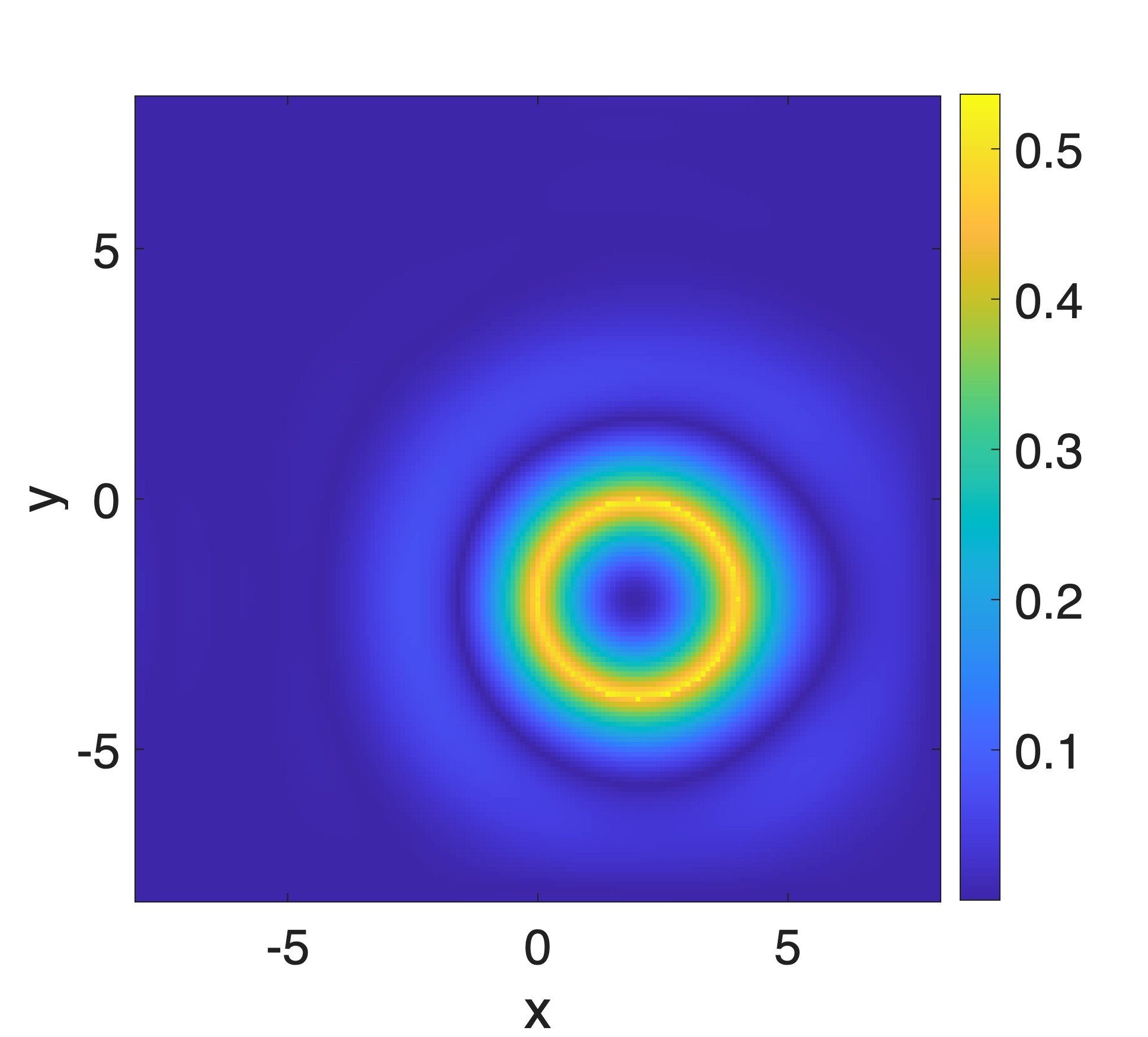}
    }

    \makebox[0.31\textwidth]{}
    \hfill
    \subfloat[Reconstructed initial condition $u_0^{\rm rec}$, $20\%$ noise]{
        \includegraphics[width=0.31\textwidth]{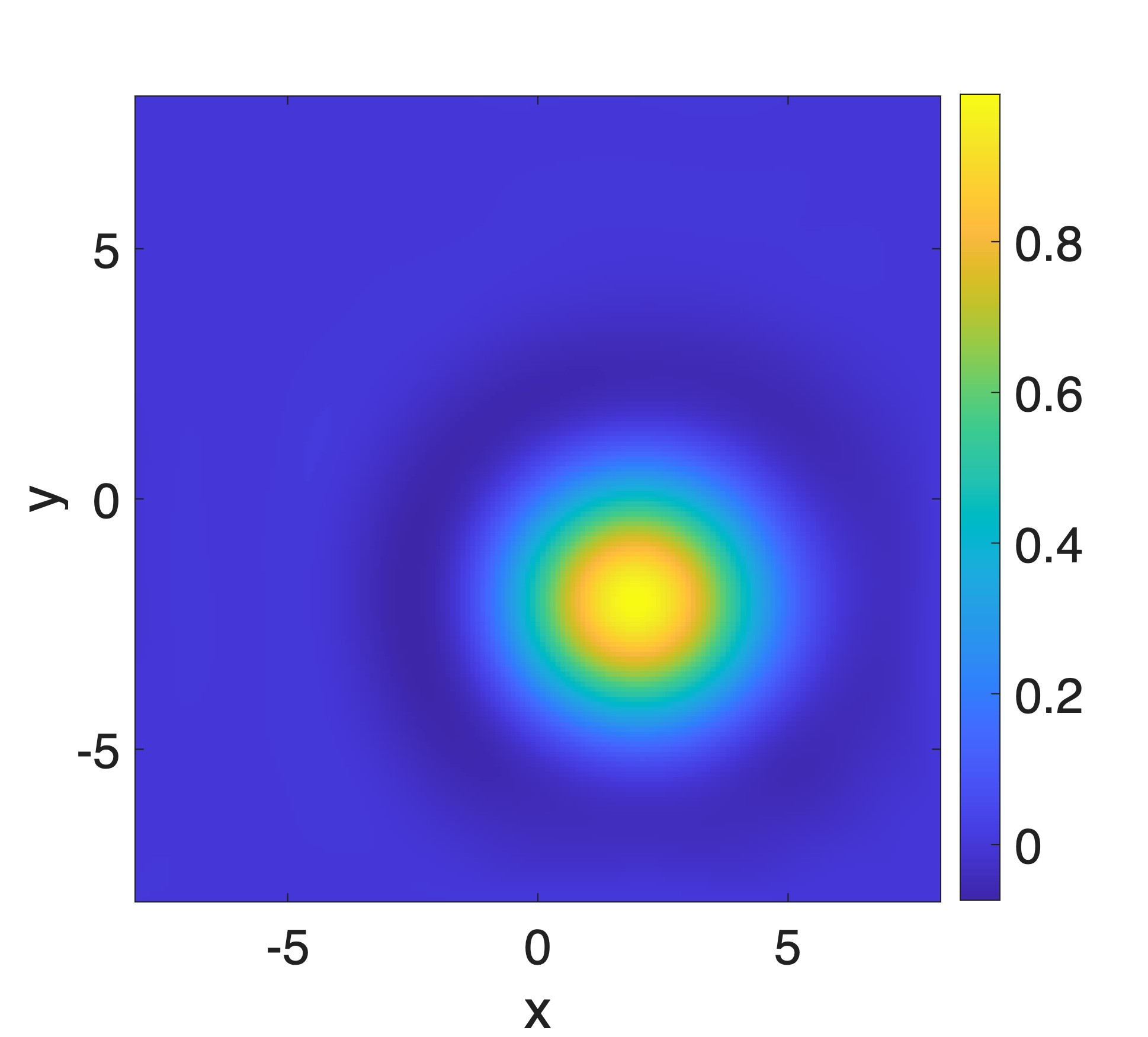}
    }
    \hfill
    \subfloat[Absolute error $|u_0^{\rm rec}-u_0|$, $20\%$ noise]{
        \includegraphics[width=0.31\textwidth]{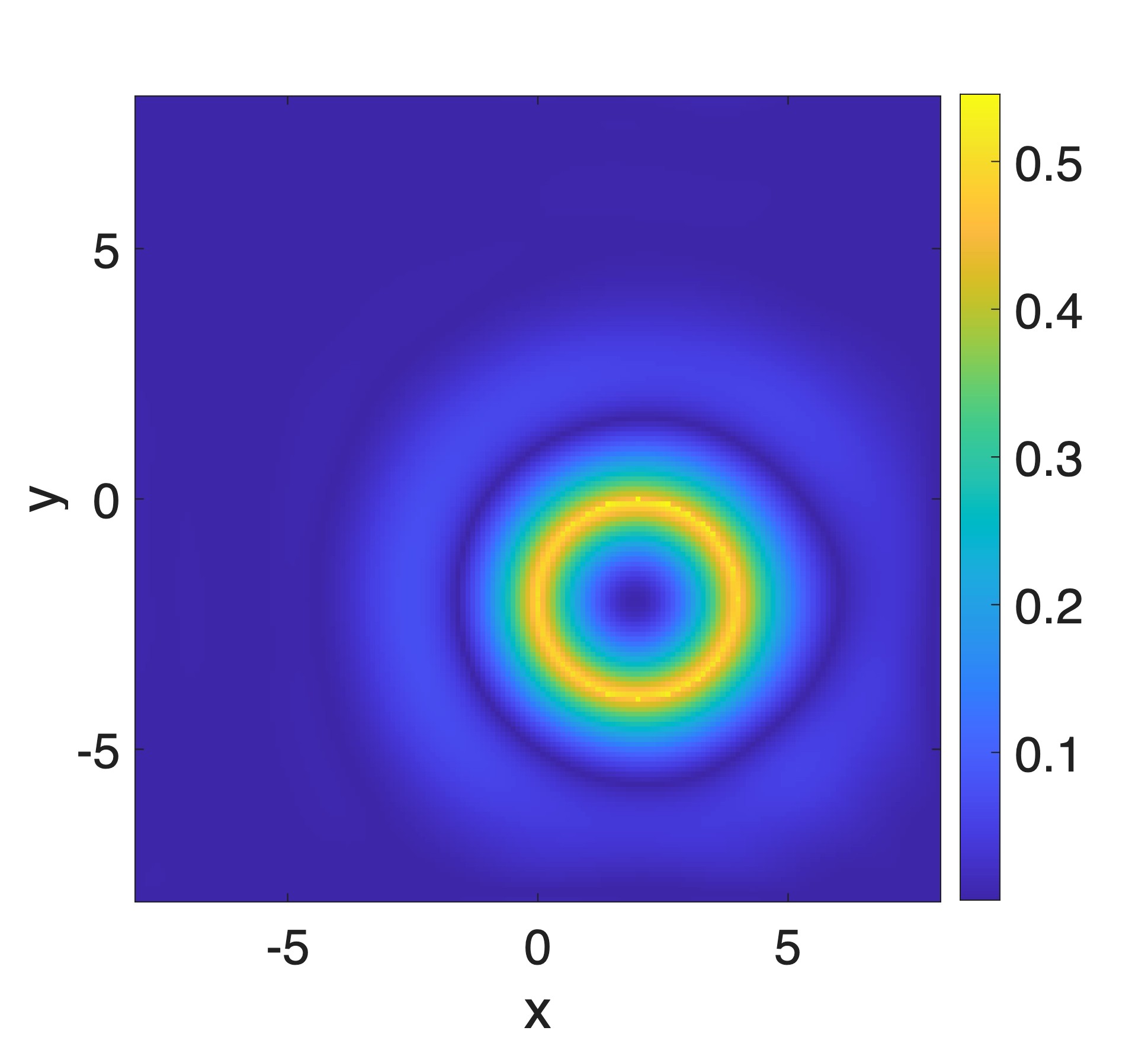}
    }

   \caption{Test 1. Reconstruction results with $10\%$ and $20\%$ noise in the terminal data.} \label{fig:case1-noise10-noise20}
\end{figure}

The reconstruction in Figure \ref{fig:case1-noise10-noise20} captures the location and overall shape of the exact initial profile well for both noise levels. 
As expected, the main error occurs near the inclusion boundary, where the exact profile has a sharp transition. 
To quantify the recovery of the peak amplitude, we define the relative maximum error by
\[
    E_{\max}=\frac{|\max u_0-\max u_0^{\rm rec}|}{\max u_0}.
\]
For $10\%$ noise, the maximum value of the reconstructed initial condition is $0.997039$, with $E_{\max}=0.30\%$. 
For $20\%$ noise, the maximum value of the reconstructed initial condition is $0.995763$, with $E_{\max}=0.42\%$. 
Thus, even when the noise level is increased from $10\%$ to $20\%$, the proposed method still accurately recovers the peak amplitude, while the largest pointwise errors remain concentrated near the jump interface.

\textbf{Test 2.}
In this test, the exact initial condition consists of two elliptical inclusions with opposite signs. 
The first ellipse is centered at $(-3.5,-3.5)$ with semi-axes $4$ and $2$, and has value $-2$. 
The second ellipse is centered at $(3,3)$ with semi-axes $2$ and $4$, and has value $2$. 
More precisely,
\[
    u_0(x,y)
    =
    \begin{cases}
    -2,
    &
    \left(\dfrac{x+3.5}{4}\right)^2
    +
    \left(\dfrac{y+3.5}{2}\right)^2
    \leq 1,
    \\[0.35cm]
    2,
    &
    \left(\dfrac{x-3}{2}\right)^2
    +
    \left(\dfrac{y-3}{4}\right)^2
    \leq 1,
    \\[0.35cm]
    0,
    & \text{otherwise}.
    \end{cases}
\]

The coefficient of the Laplacian term is
\[
    a(x,y,t)
    =
    a_0(x,y)
    \left[
    1+0.10\sin\left(2\pi t+\frac{\pi}{6}\right)
    \right],
\]
where
\[
    a_0(x,y)
    =
    \begin{cases}
       0.9
    +
    0.15\sin\left(\frac{\pi x}{15}\right)
    \cos\left(\frac{\pi y}{15}\right),
    &  x^2+(y-3)^2\geq 0.7^2,
    \\
        0, &\text{otherwise},
    \end{cases}
\]
This test includes a region where the coefficient $a$ is not strictly positive.

The convection field is
\[
    b_x(x,y,t)
    =
    0.35\cos\left(\frac{\pi y}{15}\right)
    \left[
    1+0.18\sin\left(2\pi t+\frac{\pi}{3}\right)
    \right],
\]
and
\[
    b_y(x,y,t)
    =
    -0.35\sin\left(\frac{\pi x}{15}\right)
    \left[
    1+0.18\sin\left(2\pi t+\frac{\pi}{2}\right)
    \right].
\]
The memory kernel is
\[
    \alpha(x,y,s)
    =
    \left[
    0.12
    +
    0.04\cos\left(\frac{\pi x}{15}\right)
    \cos\left(\frac{\pi y}{15}\right)
    \right]
    e^{-s}
    \left[
    1+0.25\cos(2\pi s)
    \right],
    \qquad s\geq 0.
\]

Figure \ref{fig:case2-noise10-noise20} shows the reconstruction results for Test 2 with $10\%$ and $20\%$ noise in the terminal data. 
The exact initial condition contains two discontinuous elliptical inclusions with opposite signs. 
For both noise levels, the reconstruction captures the positive and negative components, including their locations, orientations, and approximate amplitudes. 
As expected, the largest pointwise errors occur near the boundaries of the two ellipses, where the exact initial condition has jump discontinuities. 
Away from these interfaces, the reconstruction error remains relatively small.

\begin{figure}[H]
    \subfloat[Exact initial condition $u_0$]{
        \includegraphics[width=0.31\textwidth]{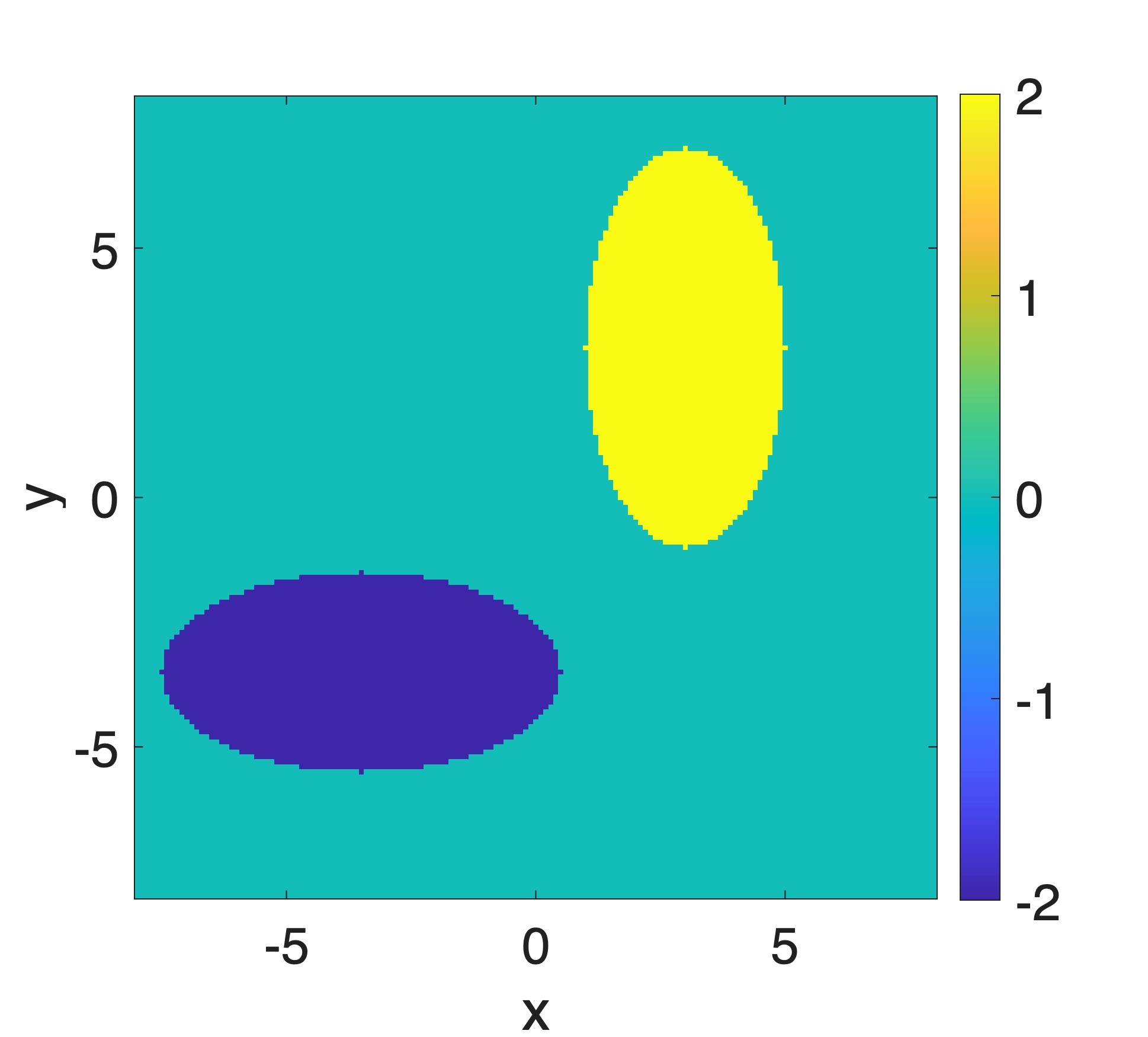}
    }
    \hfill
    \subfloat[Reconstructed initial condition $u_0^{\rm rec}$, $10\%$ noise]{
        \includegraphics[width=0.31\textwidth]{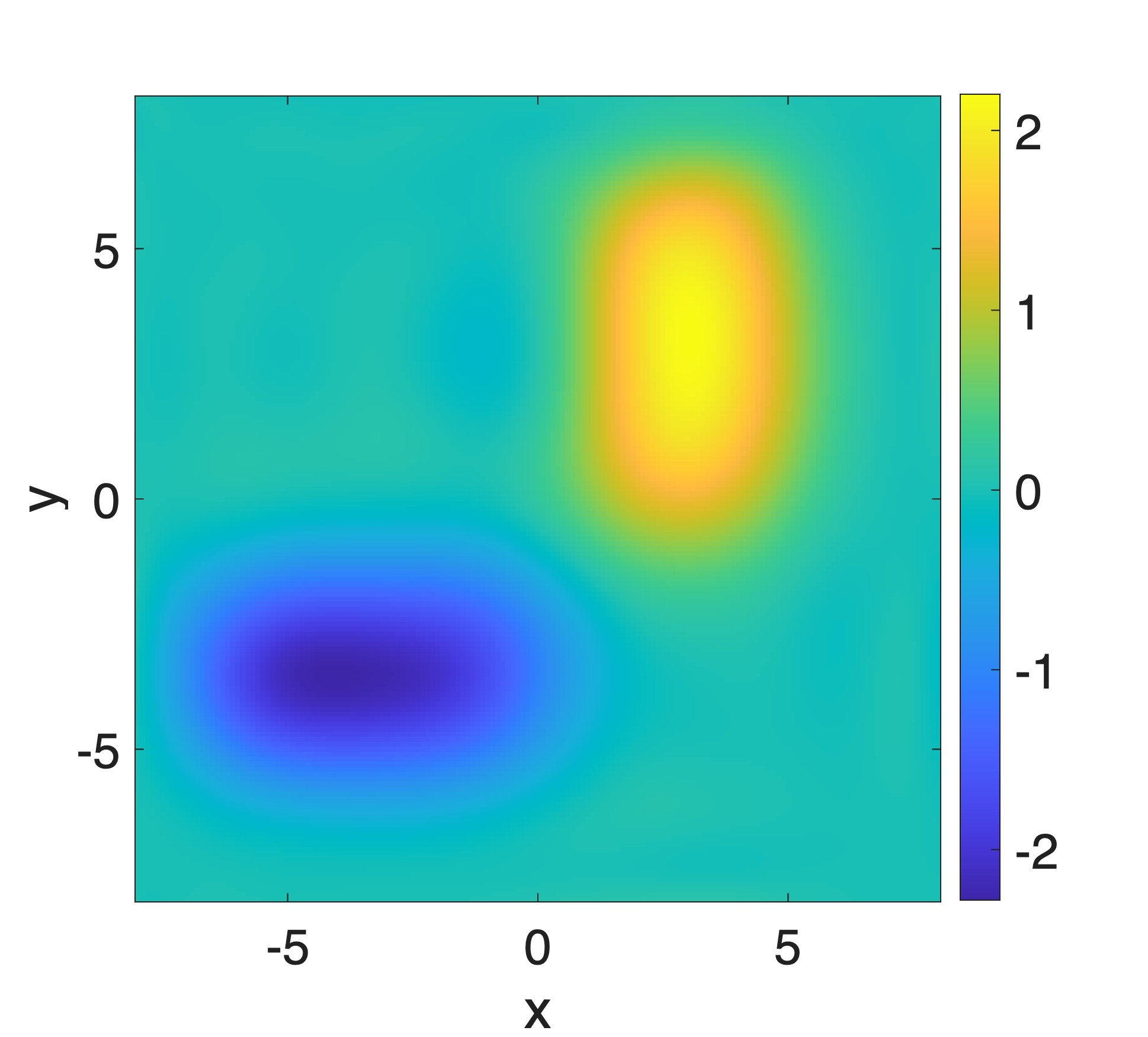}
    }
    \hfill
    \subfloat[Absolute error $|u_0^{\rm rec}-u_0|$, $10\%$ noise]{
        \includegraphics[width=0.31\textwidth]{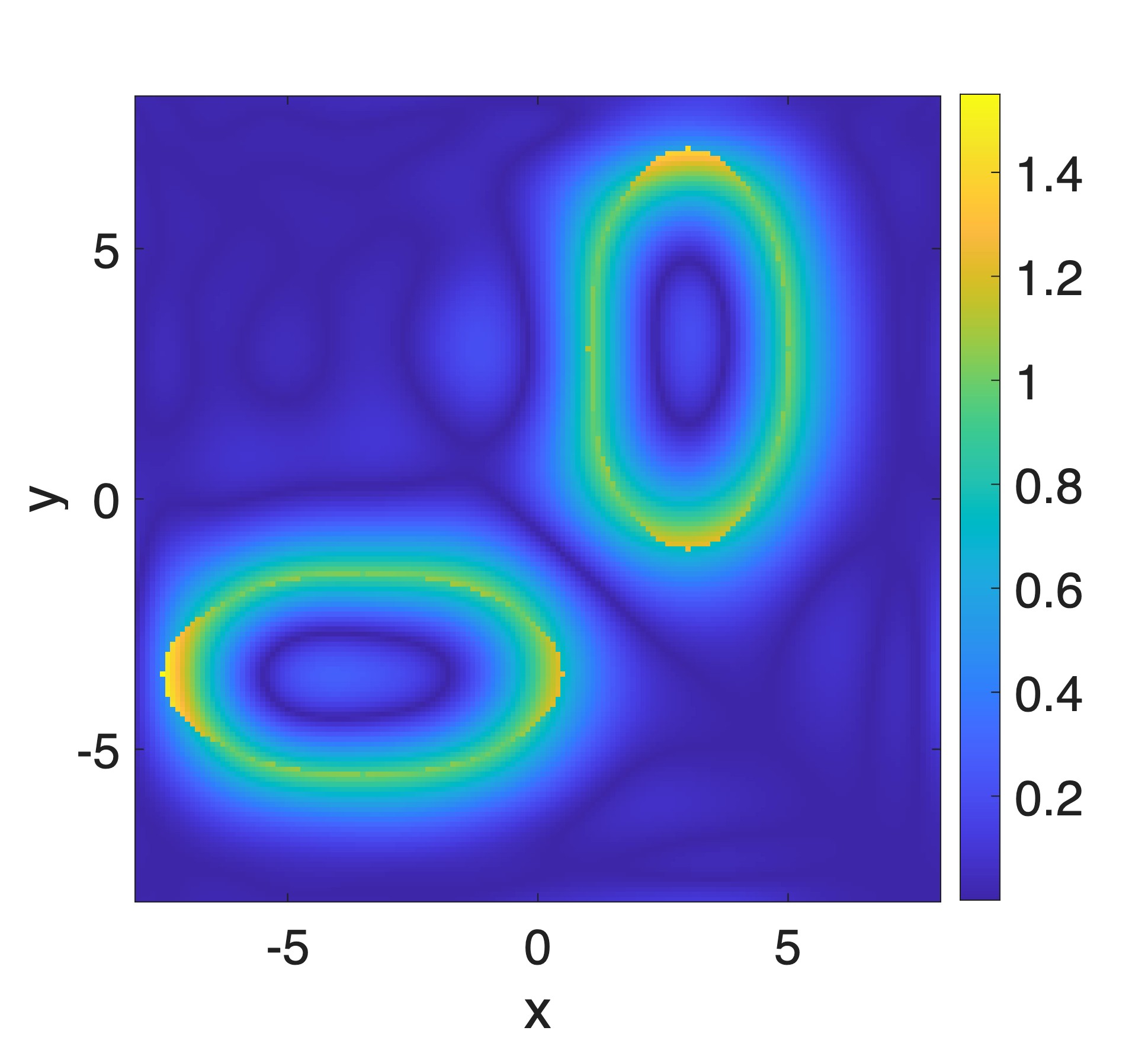}
    }

    \makebox[0.31\textwidth]{}
    \hfill
    \subfloat[Reconstructed initial condition $u_0^{\rm rec}$, $20\%$ noise]{
        \includegraphics[width=0.31\textwidth]{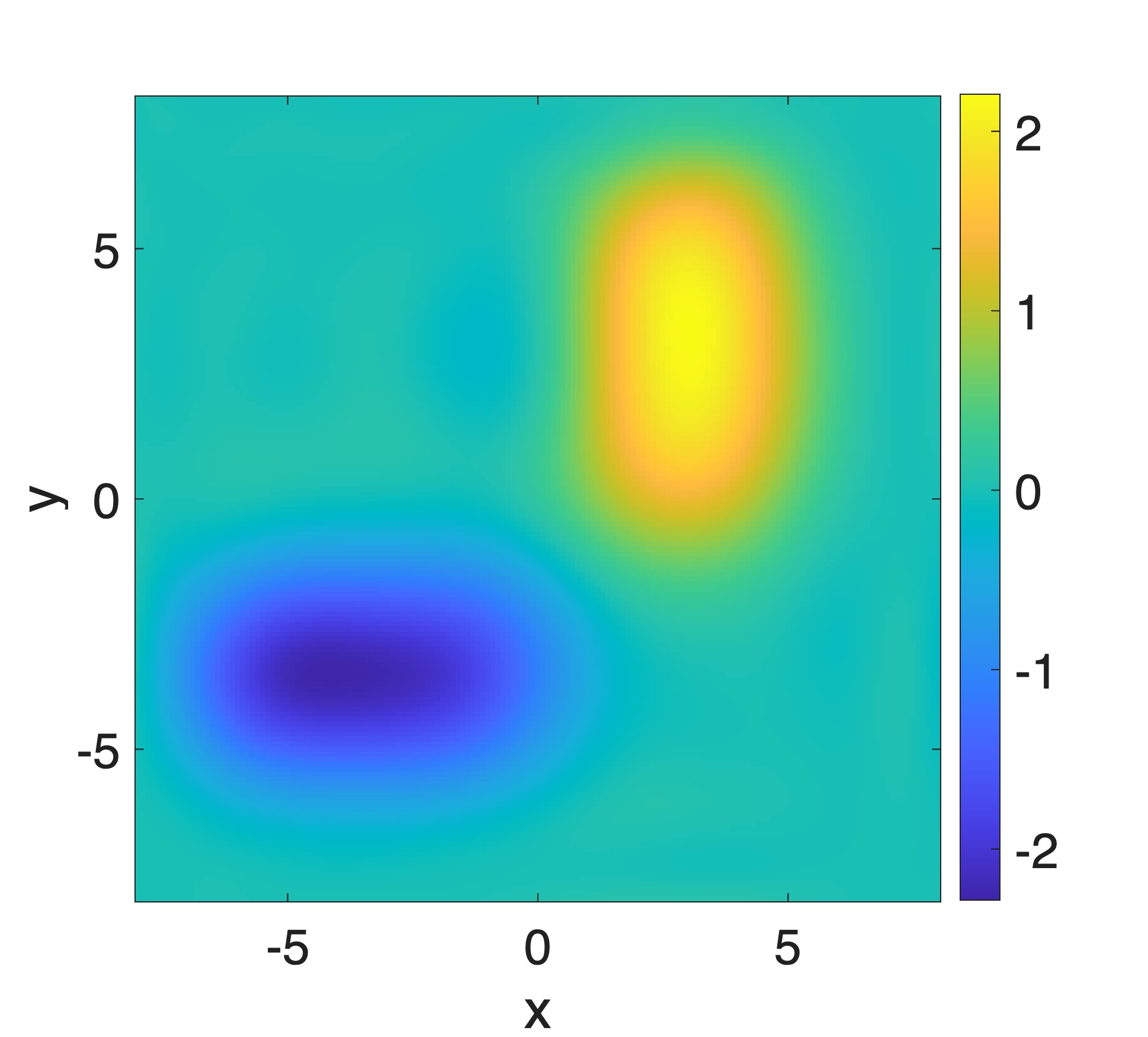}
    }
    \hfill
    \subfloat[Absolute error $|u_0^{\rm rec}-u_0|$, $20\%$ noise]{
        \includegraphics[width=0.31\textwidth]{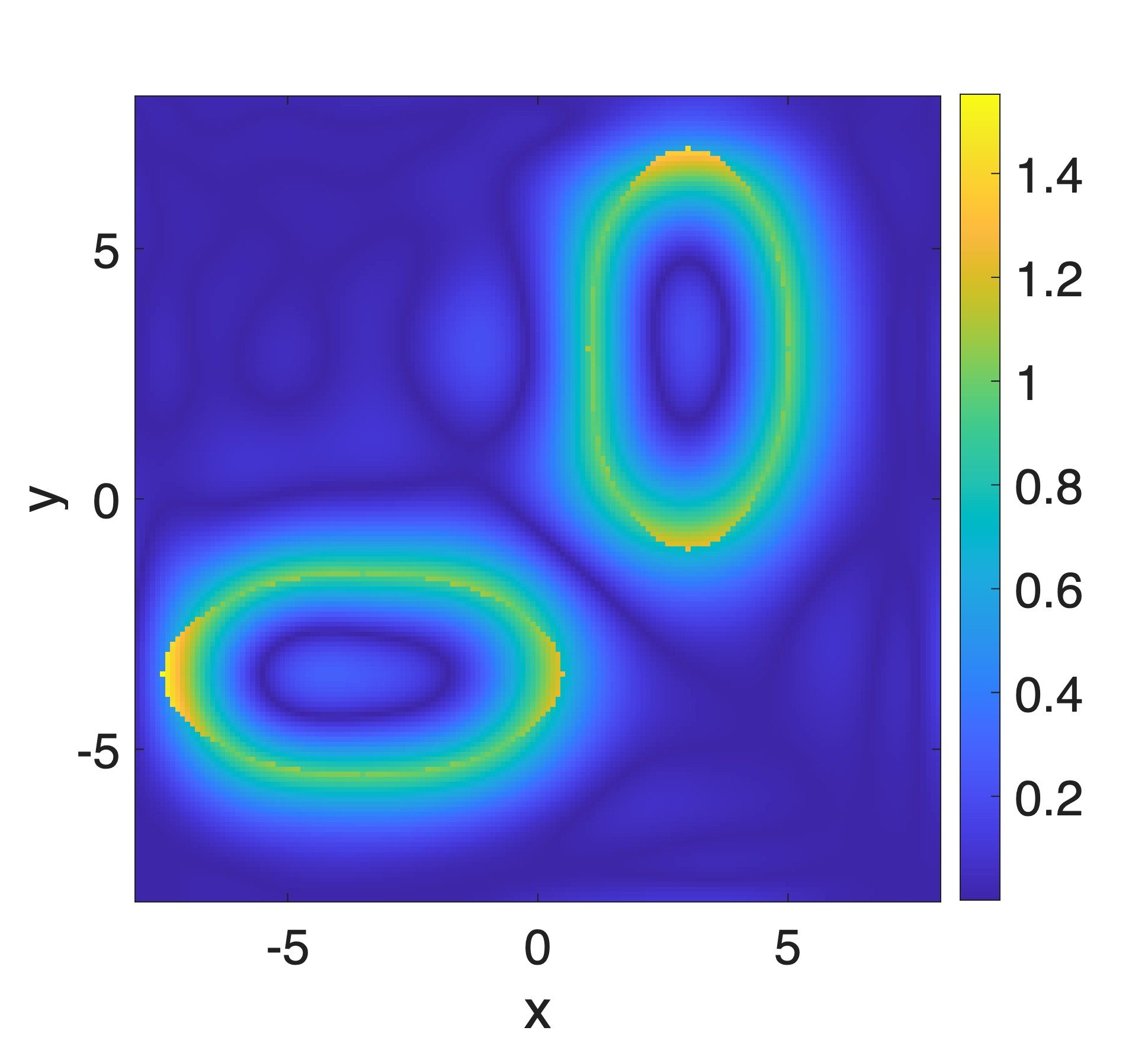}
    }

    \caption{Test 2. Reconstruction results with $10\%$ and $20\%$ noise in the terminal data.} \label{fig:case2-noise10-noise20}
\end{figure}

The reconstruction in Figure \ref{fig:case2-noise10-noise20} captures the main structure of the exact initial condition well for both noise levels. 
Both elliptical inclusions are recovered with the correct locations, orientations, and signs. 
The positive inclusion in the upper-right region and the negative inclusion in the lower-left region are clearly visible in the reconstructed images. 
As expected, the largest errors occur near the boundaries of the two inclusions, where the exact initial condition has jump discontinuities. 

For $10\%$ noise, the maximum value is reconstructed as $2.20155$, compared with the true maximum value $2.00$, giving a relative maximum error of $10.08\%$. 
The minimum value is reconstructed as $-2.2811$, compared with the true minimum value $-2.00$, giving a relative minimum error of $14.05\%$. 
For $20\%$ noise, the maximum value is reconstructed as $2.2068$, giving a relative maximum error of $10.34\%$, while the minimum value is reconstructed as $-2.2845$, giving a relative minimum error of $14.23\%$. 
Overall, the method reconstructs the geometry and contrast of both inclusions reasonably well even when the noise level is increased to $20\%$, although the amplitudes are slightly overestimated.

\textbf{Test 3.}
In this test, the exact initial condition is an annular inclusion with an opening on the right-hand side.
More precisely, 
\[
    u_0(x,y)
    =
    \begin{cases}
    1,
    & 3\leq  \sqrt{x^2+y^2} \leq 6
    \text{ and }
    \left(x\leq 0 \text{ or } |y|>1.35\right),
    \\[0.25cm]
    0,
    & \text{otherwise}.
    \end{cases}
\]
Visually, the true initial profile resembles the letter ``C''. 
It consists of an annular region with a horizontal opening on the right-hand side.

The coefficient of the Laplacian term is
\[
    a(x,y,t)
    =
    a_0(x,y)
    \left[
    1+0.12\sin\left(2\pi t+\frac{\pi}{5}\right)
    \right],
\]
where
\[
    a_0(x,y)
    =
    \begin{cases}
    0.75
    +
    0.25\sin\left(\frac{\pi x}{15}\right)
    \cos\left(\frac{\pi y}{15}\right),
    & (x-3)^2+(y-2.5)^2>0.35^2,
    \\
    0,
    & \mbox{otherwise.}
    \end{cases}
\]
Hence, this test includes a small region where the coefficient $a$ vanishes.

The convection field is
\[
    b_x(x,y,t)
    =
    0.4\cos\left(\frac{\pi y}{15}\right)
    \left[
    1+0.20\sin\left(2\pi t+\frac{\pi}{3}\right)
    \right],
\]
and
\[
    b_y(x,y,t)
    =
    0.4\sin\left(\frac{\pi x}{15}\right)
    \left[
    1+0.20\sin\left(2\pi t+\frac{\pi}{2}\right)
    \right].
\]
The spatial profile of the memory kernel is
\[
    \alpha_0(x,y)
    =
    0.10
    +
    0.03\cos\left(\frac{\pi x}{15}\right)
    \cos\left(\frac{\pi y}{15}\right).
\]
Therefore, using the common lag-dependent structure, the memory kernel is
\[
    \alpha(x,y,s)
    =
    \alpha_0(x,y)e^{-s}
    \left[
    1+0.25\cos(2\pi s)
    \right],
    \qquad s\geq 0.
\]

\begin{figure}[H]
    \subfloat[Exact initial condition $u_0$]{
        \includegraphics[width=0.31\textwidth]{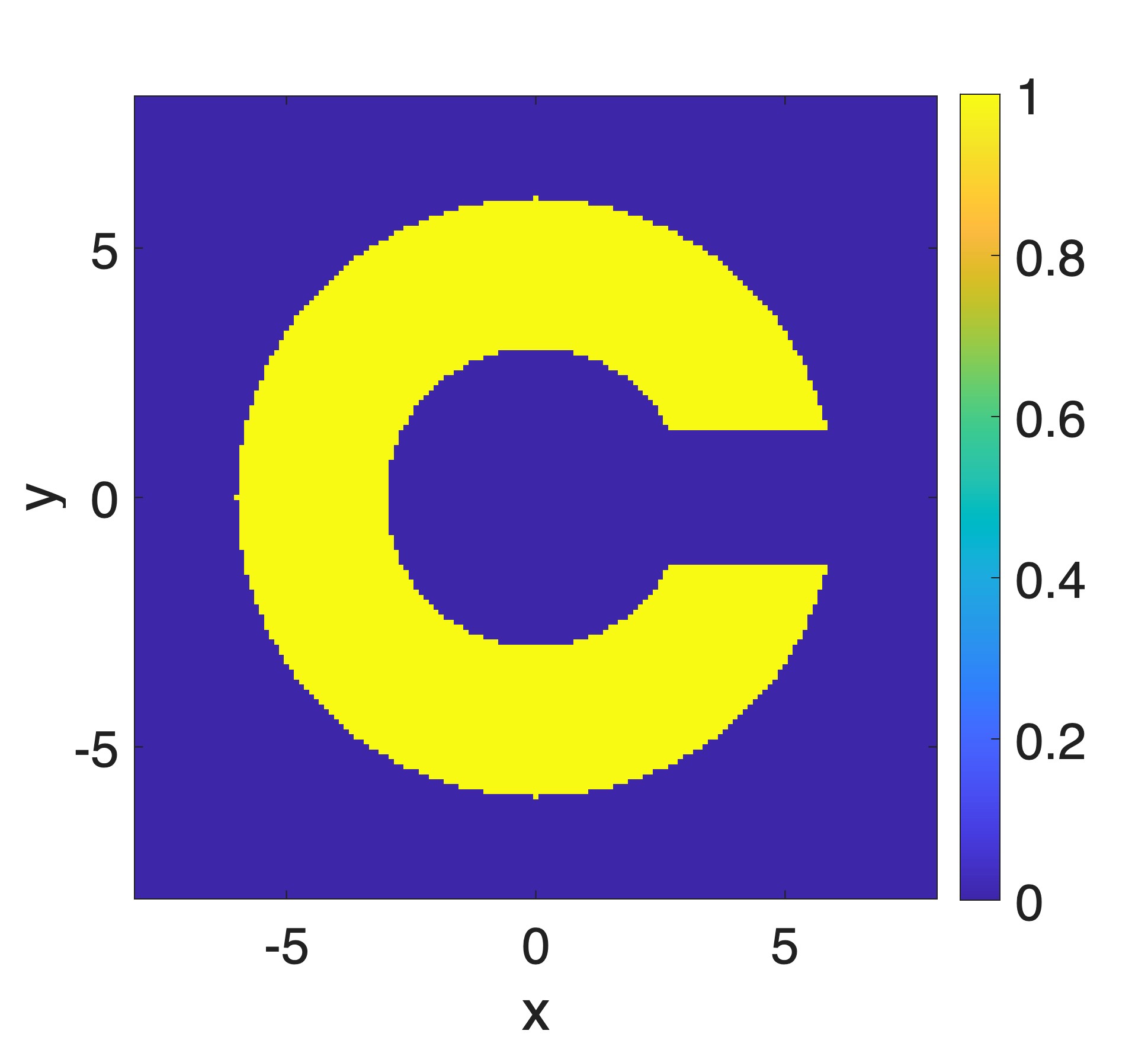}
    }
    \hfill
    \subfloat[Reconstructed initial condition $u_0^{\rm rec}$, $5\%$ noise]{
        \includegraphics[width=0.31\textwidth]{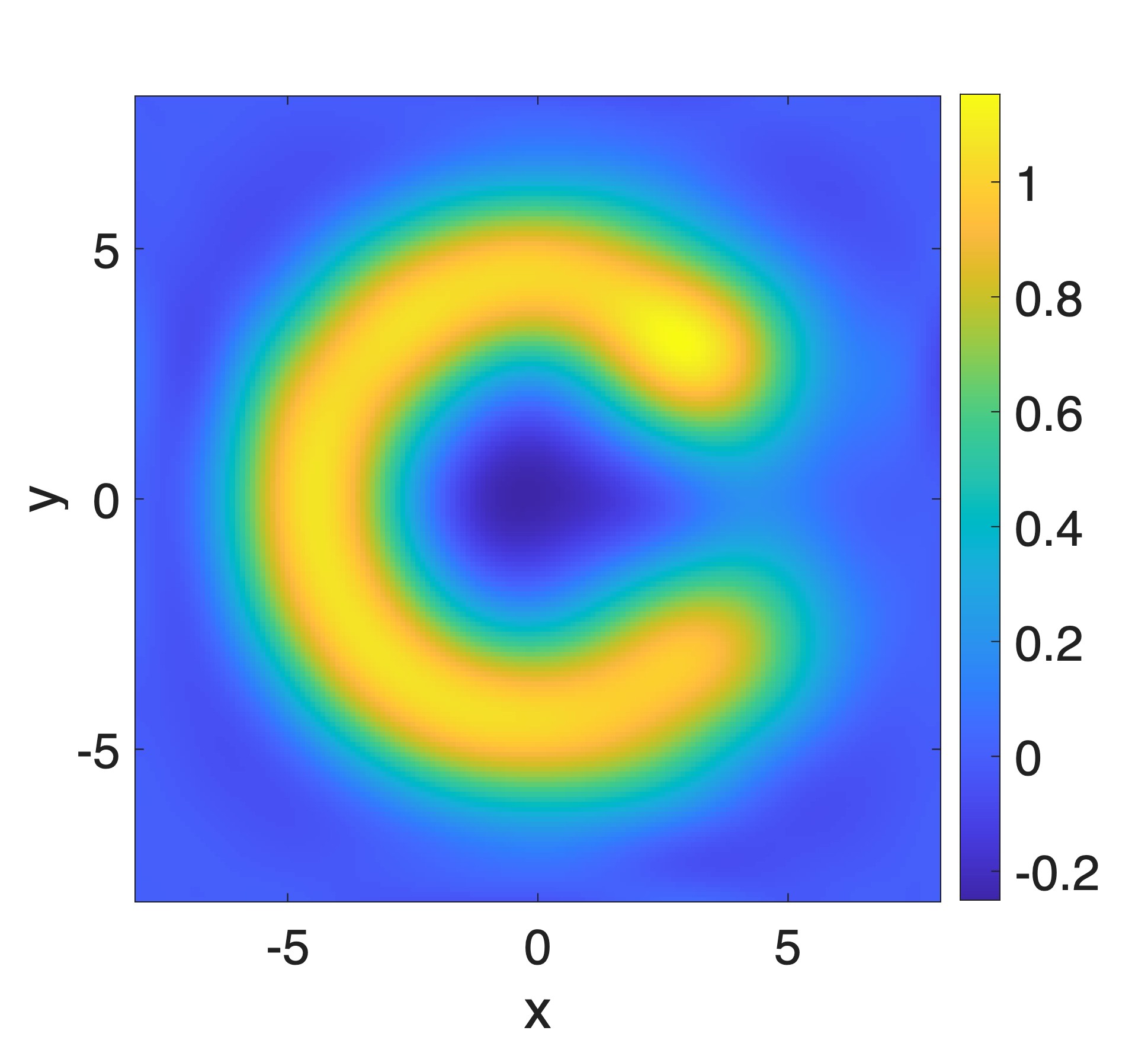}
    }
    \hfill
    \subfloat[Absolute error $|u_0^{\rm rec}-u_0|$, $5\%$ noise]{
        \includegraphics[width=0.31\textwidth]{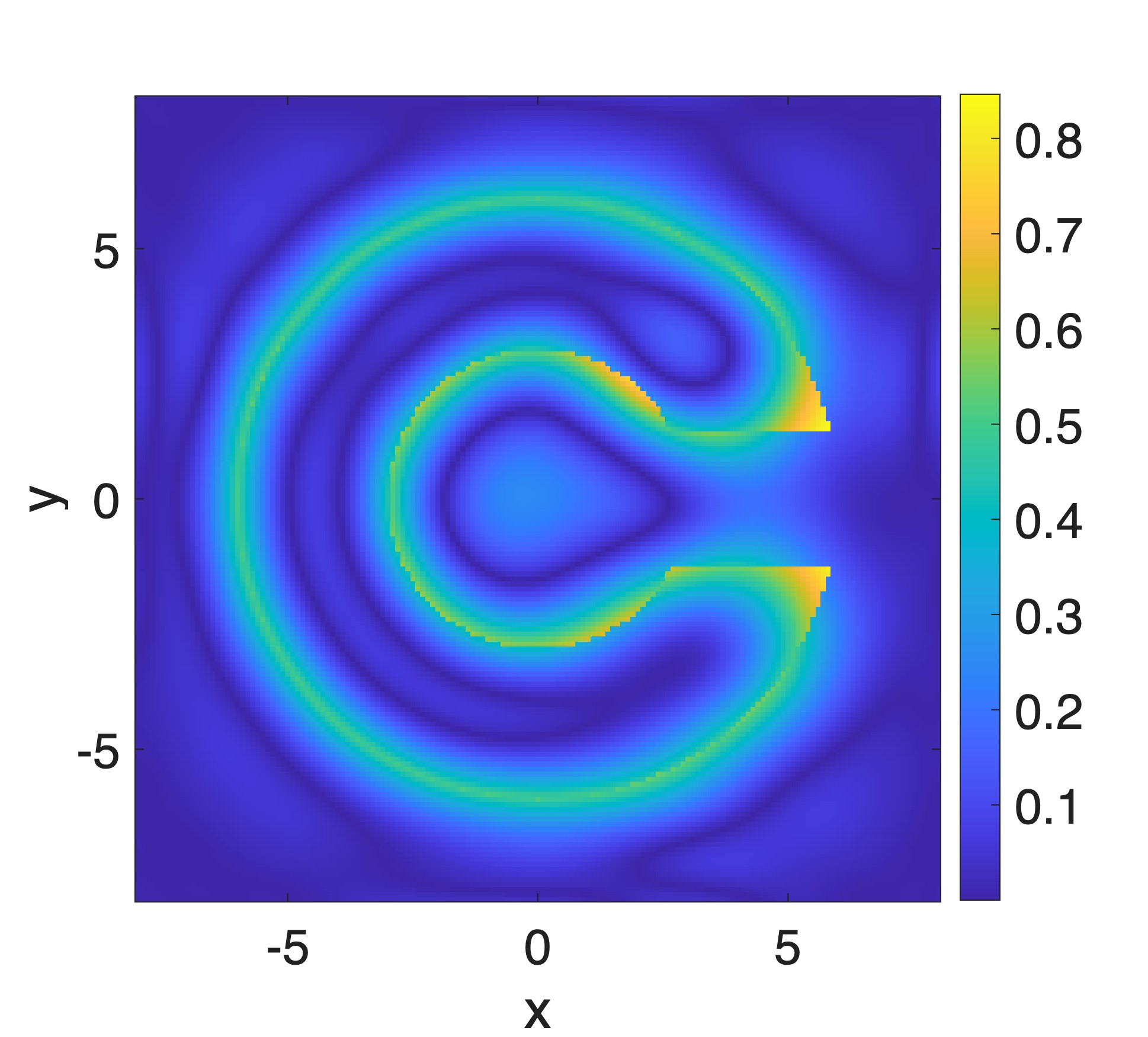}
    }

    \makebox[0.31\textwidth]{}
    \hfill
    \subfloat[Reconstructed initial condition $u_0^{\rm rec}$, $10\%$ noise]{
        \includegraphics[width=0.31\textwidth]{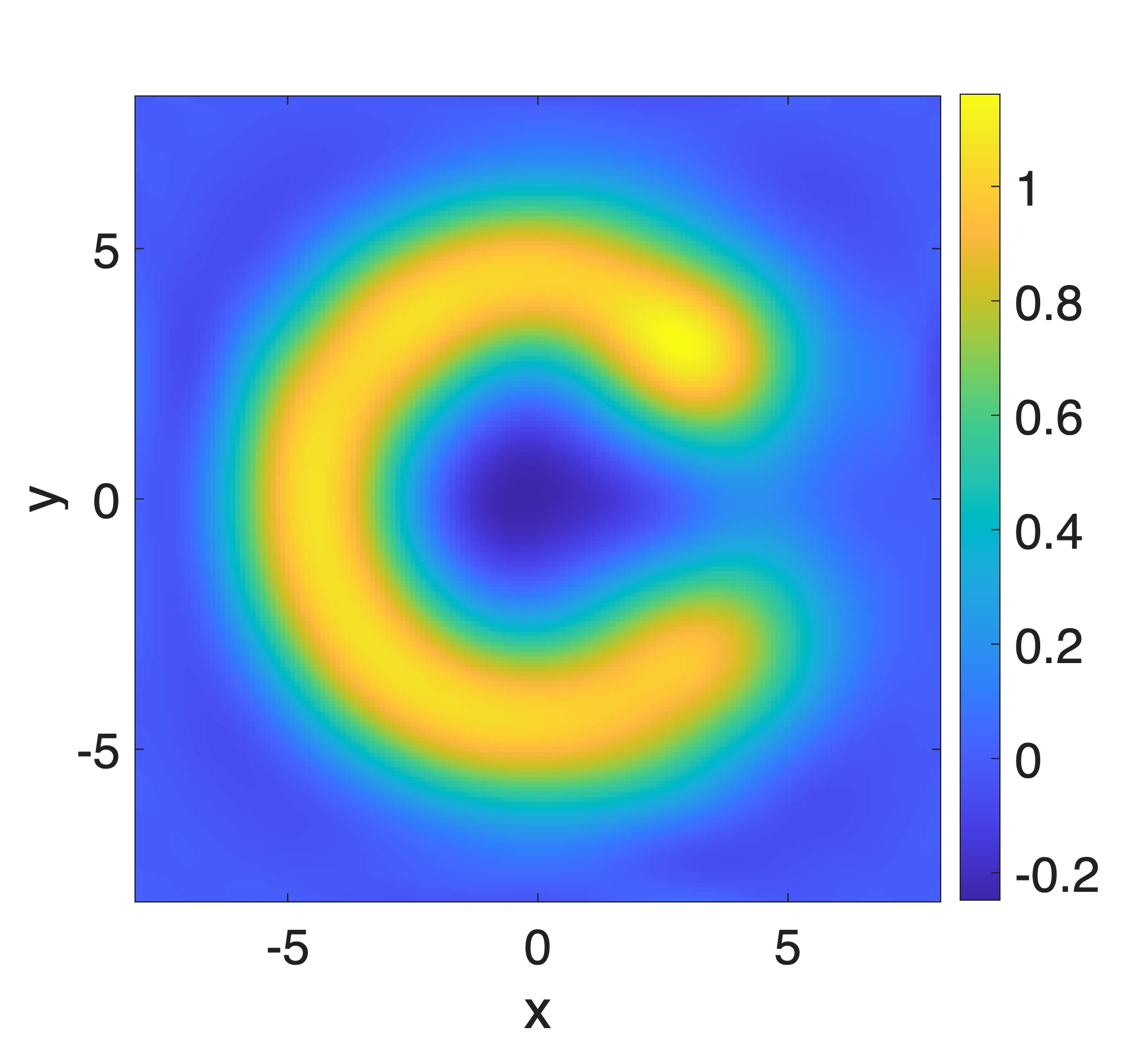}
    }
    \hfill
    \subfloat[Absolute error $|u_0^{\rm rec}-u_0|$, $10\%$ noise]{
        \includegraphics[width=0.31\textwidth]{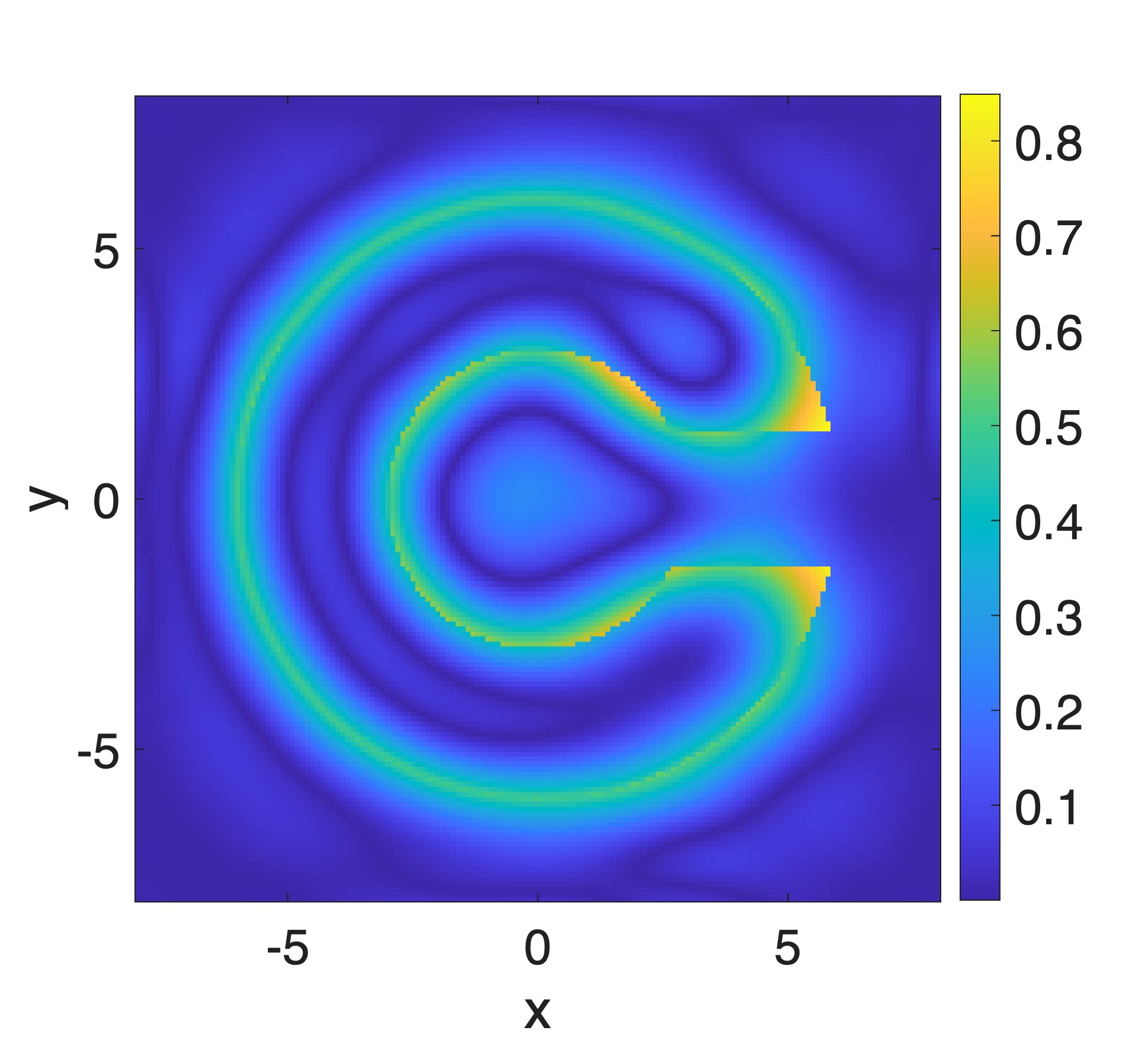}
    }

    \caption{Test 3. Reconstruction results with $5\%$ and $10\%$ noise in the terminal data.} 
    \label{fig:case3-noise5-noise10}
\end{figure}

Figure \ref{fig:case3-noise5-noise10} shows the reconstruction results for Test 3 with $5\%$ and $10\%$ noise in the terminal data. 
The exact initial condition is a discontinuous ring-shaped profile with an opening on the right-hand side; visually, it resembles the letter ``C''. 
For both noise levels, the reconstruction captures the overall C-shaped geometry, including the inner void and the right-hand opening. 
As in the previous tests, the main errors occur near the jump boundaries of the initial profile, especially along the inner and outer interfaces and near the two ends of the opening. 
Away from these interfaces, the reconstruction error remains relatively small.

Visually, the reconstruction in Figure \ref{fig:case3-noise5-noise10} is quite satisfactory for both noise levels. 
The method successfully recovers the main C-shaped structure, including the inner void and the opening on the right-hand side. 
The reconstructed images preserve the overall geometry of the true initial condition, although the sharp interfaces are smoothed, as expected for a backward reconstruction from noisy terminal data. 
The largest pointwise errors are concentrated near the jump boundaries, especially along the inner and outer edges of the ring and near the two ends of the opening. 
For $5\%$ noise, the maximum value of the reconstructed initial condition is $1.152836$, giving $E_{\max}=15.28\%$. 
For $10\%$ noise, the maximum value of the reconstructed initial condition is $1.16$, giving $E_{\max}=16.13\%$. 
Overall, despite the discontinuous C-shaped profile, the reconstruction captures the target's essential shape and location well for both noise levels.

\textbf{Test 4.}
In this test, the exact initial condition is a square annulus. 
It consists of a large square centered at the origin with a smaller square hole removed from the center. 
More precisely,
\[
    u_0(x,y)
    =
    \begin{cases}
    1,
    & |x|\leq 6,\ |y|\leq 6,\ \text{and } \bigl(|x|>3 \text{ or } |y|>3\bigr),
    \\[0.25cm]
    0,
    & \text{otherwise}.
    \end{cases}
\]
Thus, $u_0$ is discontinuous across the boundaries of the outer and inner squares.

The coefficient of the Laplacian term is
\[
    a(x,y,t)
    =
    a_0(x,y)
    \left[
    1+0.10\sin\left(2\pi t+\frac{\pi}{7}\right)
    \right],
\]
where
\[
    a_0(x,y)
    =
    \begin{cases}
    0.8
    +
    0.1\sin\left(\frac{\pi x}{15}\right)
    \cos\left(\frac{\pi y}{15}\right),
    &
    |x-1.2|>2.2
    \text{ or }
    |y+0.8|>1.6,
    \\[0.25cm]
    0,
    &
    |x-1.2|\leq 2.2
    \text{ and }
    |y+0.8|\leq 1.6.
    \end{cases}
\]
Hence this test includes a rectangular region where the coefficient $a$ vanishes.

The convection field is
\[
    b_x(x,y,t)
    =
    0.3\cos\left(\frac{\pi y}{15}\right)
    \left[
    1+0.18\sin\left(2\pi t+\frac{\pi}{4}\right)
    \right],
\]
and
\[
    b_y(x,y,t)
    =
    0.3\sin\left(\frac{\pi x}{15}\right)
    \left[
    1+0.18\sin\left(2\pi t+\frac{\pi}{2}\right)
    \right].
\]
The spatial profile of the memory kernel is
\[
    \alpha_0(x,y)
    =
    0.12
    +
    0.03\cos\left(\frac{\pi x}{15}\right)
    \cos\left(\frac{\pi y}{15}\right).
\]
Therefore,
\[
    \alpha(x,y,s)
    =
    \alpha_0(x,y)e^{-s}
    \left[
    1+0.25\cos(2\pi s)
    \right],
    \qquad s\geq 0.
\]

\begin{figure}[H]
    \subfloat[Exact initial condition $u_0$]{
        \includegraphics[width=0.31\textwidth]{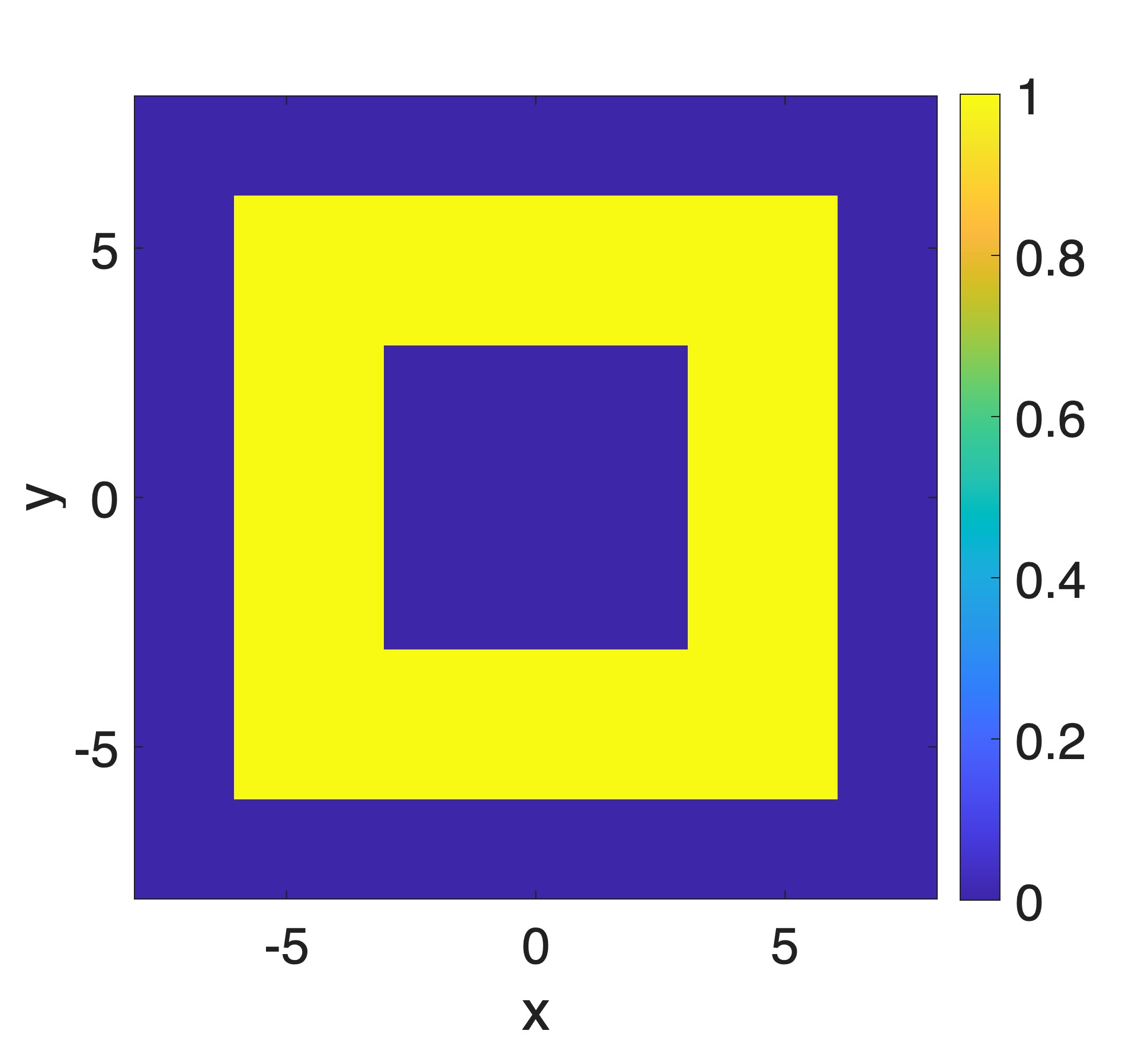}
    }
    \hfill
    \subfloat[Reconstructed initial condition $u_0^{\rm rec}$, $10\%$ noise]{
        \includegraphics[width=0.31\textwidth]{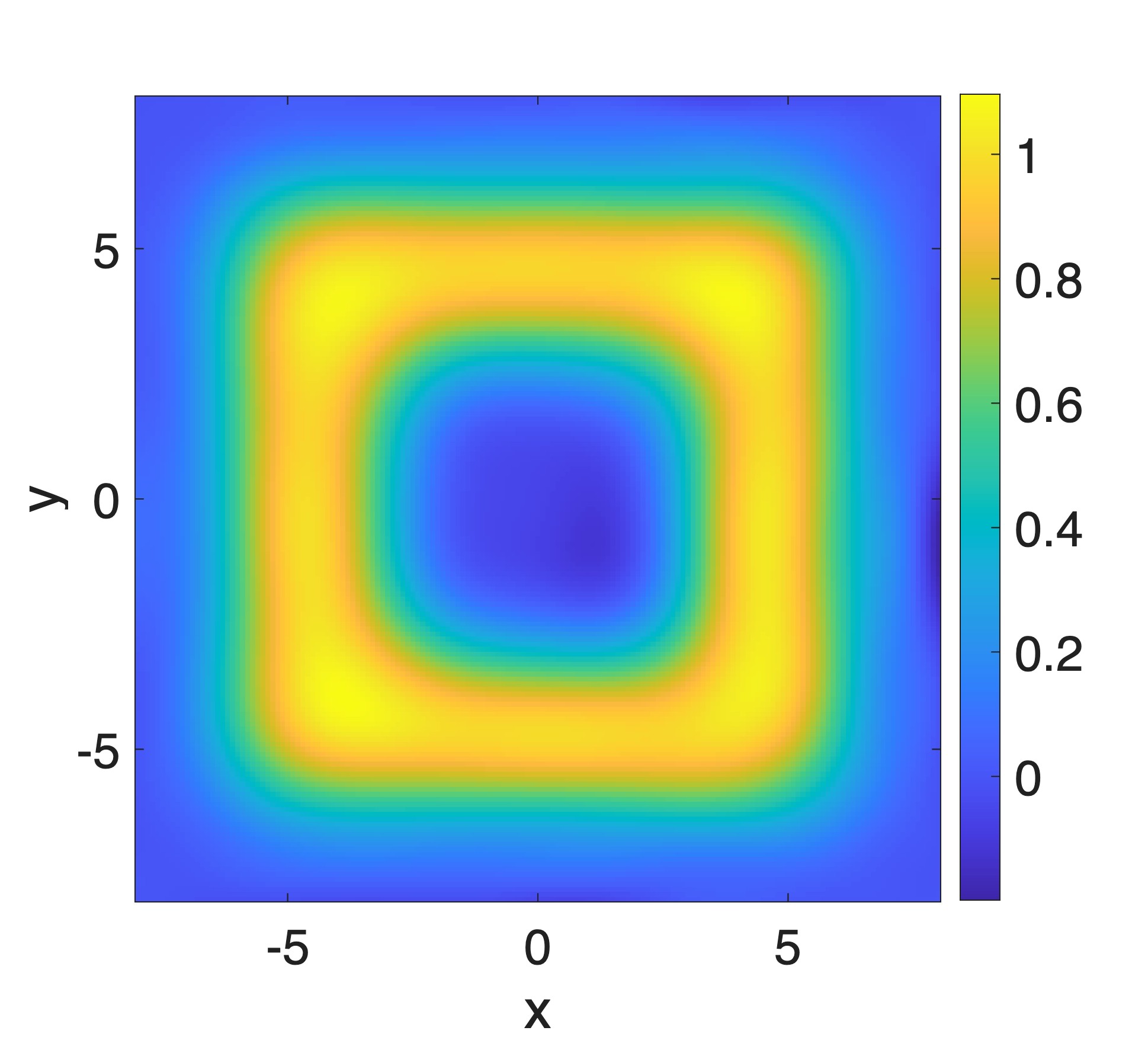}
    }
    \hfill
    \subfloat[Absolute error $|u_0^{\rm rec}-u_0|$, $10\%$ noise]{
        \includegraphics[width=0.31\textwidth]{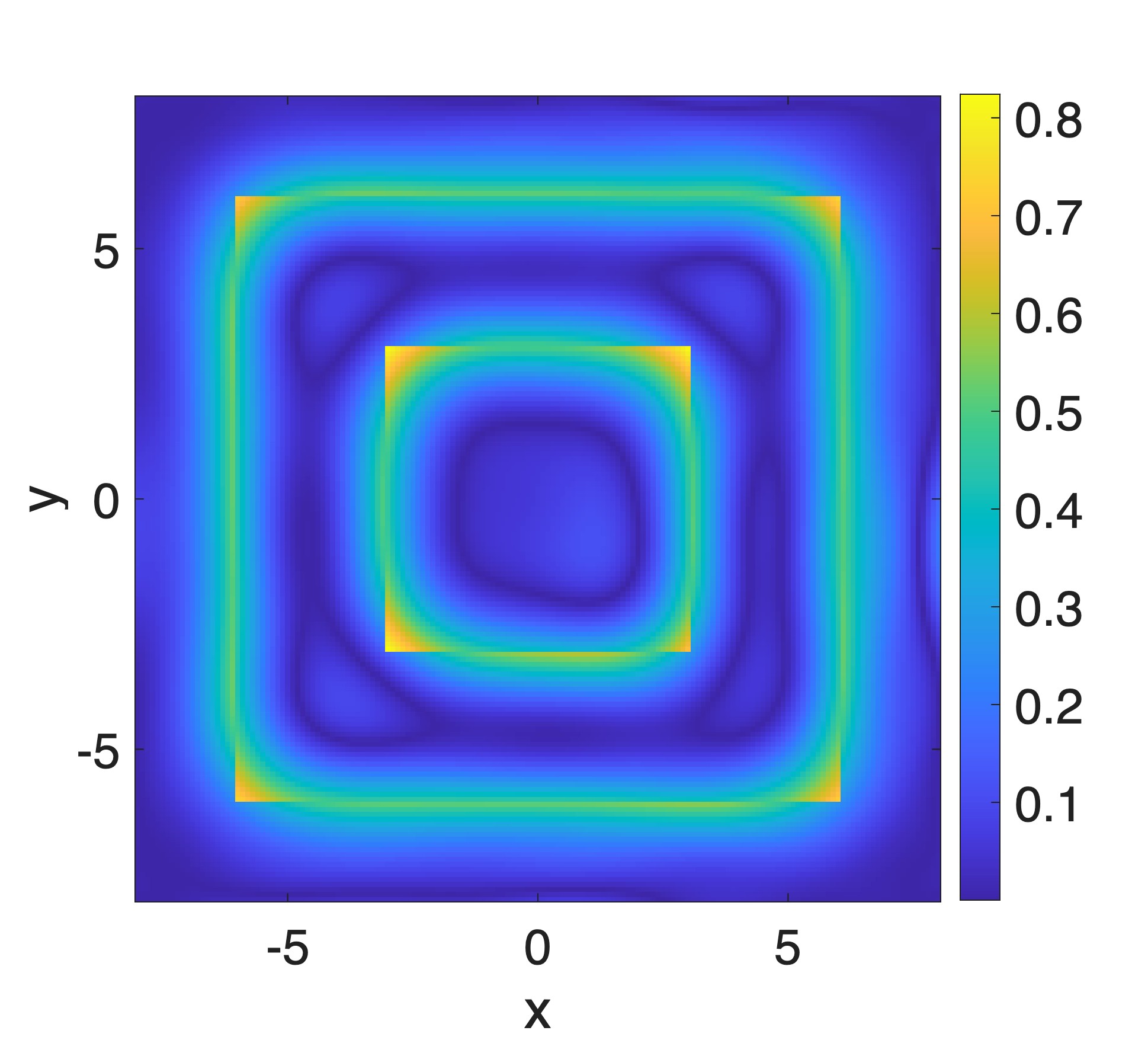}
    }

    \makebox[0.31\textwidth]{}
    \hfill
    \subfloat[Reconstructed initial condition $u_0^{\rm rec}$, $20\%$ noise]{
        \includegraphics[width=0.31\textwidth]{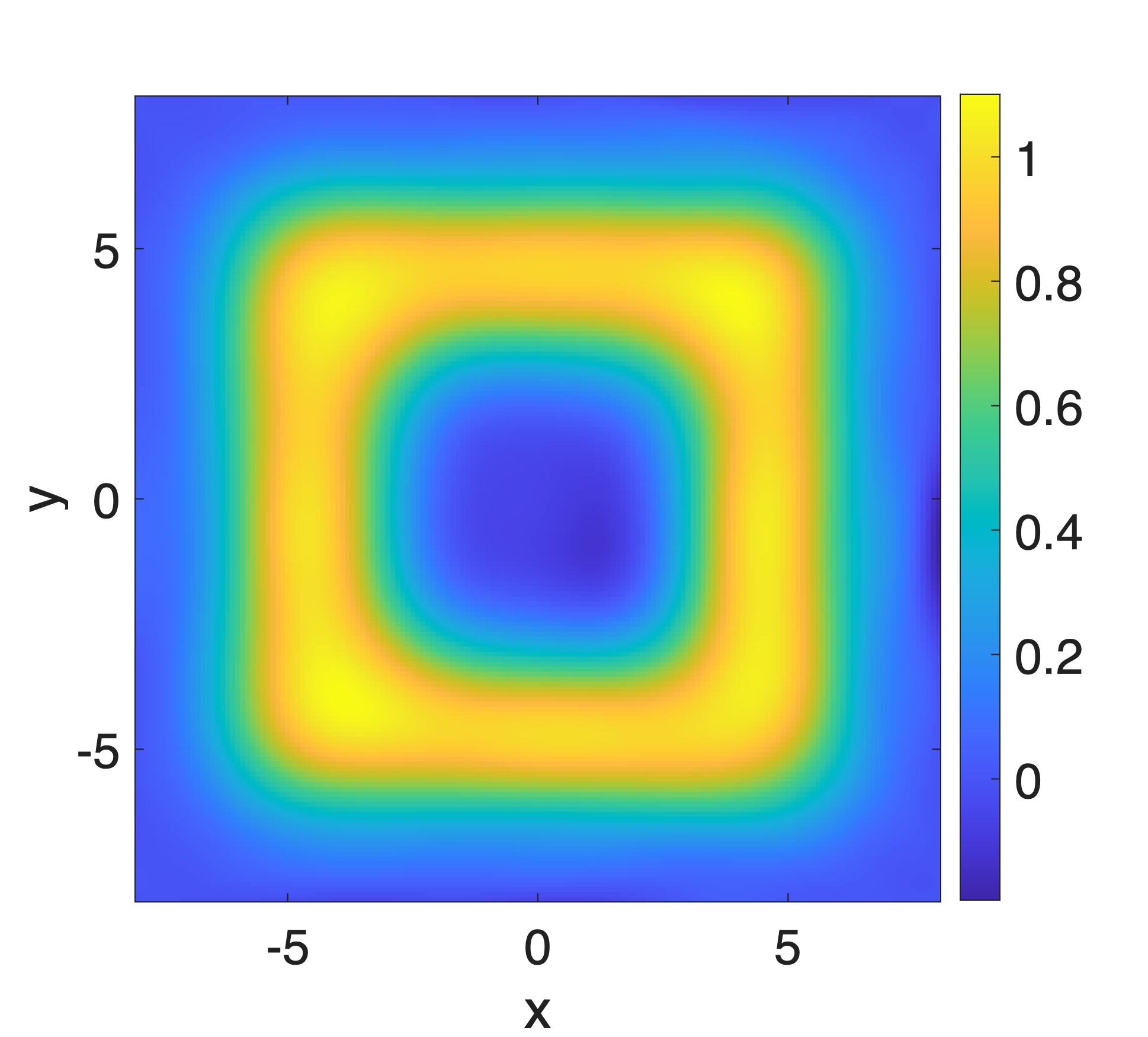}
    }
    \hfill
    \subfloat[Absolute error $|u_0^{\rm rec}-u_0|$, $20\%$ noise]{
        \includegraphics[width=0.31\textwidth]{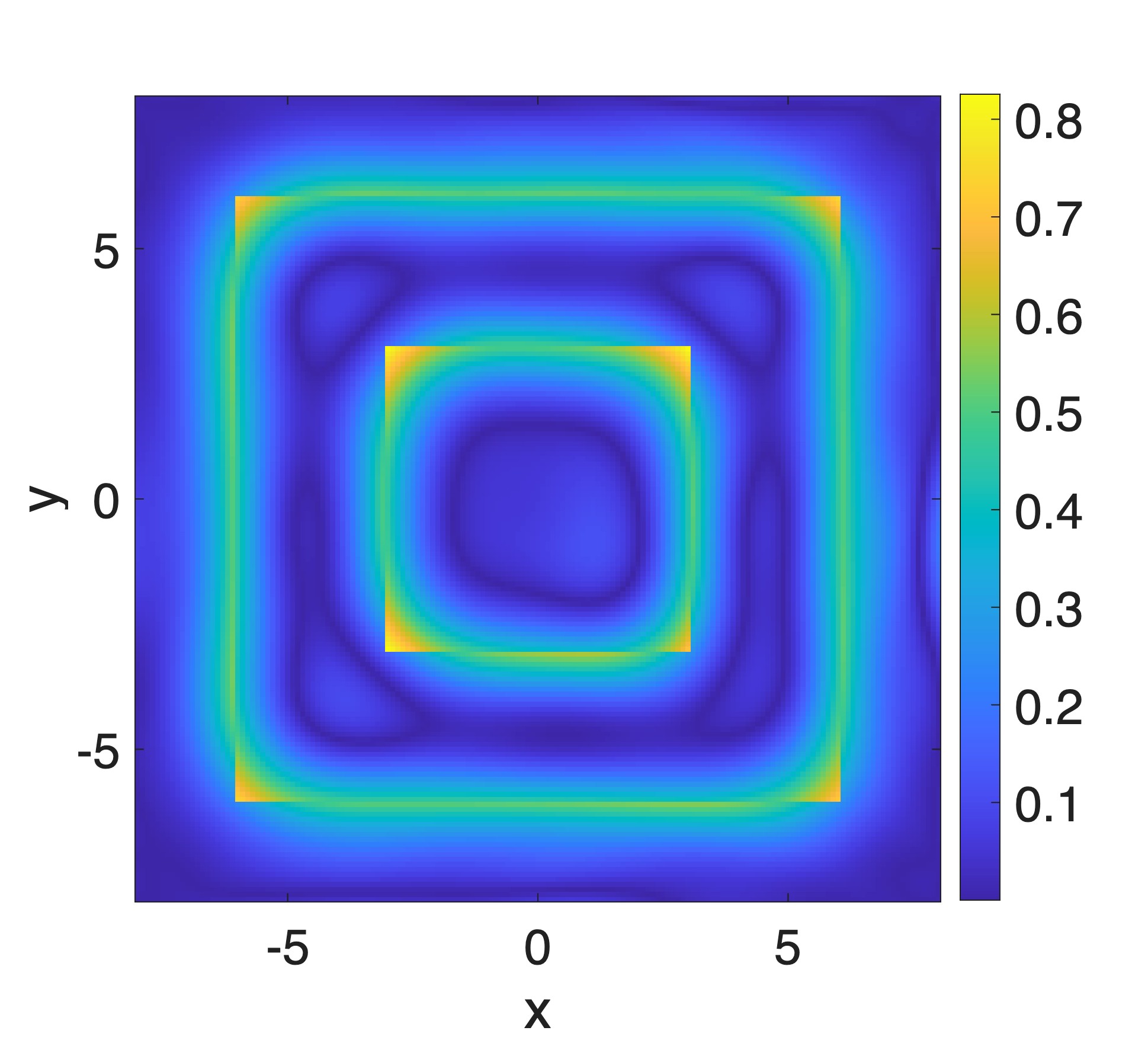}
    }

    \caption{Test 4. Reconstruction results with $10\%$ and $20\%$ noise in the terminal data.} \label{fig:case4-noise10-noise20}
\end{figure}

Figure \ref{fig:case4-noise10-noise20} shows the reconstruction results for Test 4 with $10\%$ and $20\%$ noise in the terminal data. 
The exact initial condition is a discontinuous square annulus. 
For both noise levels, the reconstruction captures the main square-ring structure well, including the outer square boundary and the inner square void. 
The reconstructed images are smoother than the exact profile, which is expected because the inverse problem is ill-posed and the data are noisy. 
The largest pointwise errors occur near jump interfaces, particularly along the inner and outer square boundaries and at the corners. 
Away from these discontinuity curves, the reconstruction error remains relatively small.

Visually, the reconstruction in Figure \ref{fig:case4-noise10-noise20} is satisfactory for both noise levels. 
The method recovers the square-annulus structure, including the central void and the outer square boundary. 
As expected, the reconstructed profiles are smoother than the exact discontinuous profile, and the main errors are concentrated near the jump interfaces, especially around the inner and outer square boundaries. 
For $10\%$ noise, the maximum value of the reconstructed initial condition is $1.10$, giving $E_{\max}=9.65\%$. 
For $20\%$ noise, the maximum value of the reconstructed initial condition is $1.100640$, giving $E_{\max}=10.06\%$. 
Overall, the reconstruction captures the main geometry of the target well for both noise levels, despite the sharp corners of the discontinuous square-annulus profile.

\begin{Remark}
The comparable reconstruction errors for different noise levels can be explained by the filtering effect of the spatial truncation. The Legendre reduction keeps only finitely many low-frequency spatial modes and removes the high-frequency components of the terminal data, where the backward instability is most severe. As a result, the reduced inverse problem is less sensitive to high-frequency noise. In the tests above, increasing the noise level does not significantly change the recovered large-scale geometry, because the dominant high-frequency perturbations are filtered out by the truncation and further damped by the Tikhonov penalty. The remaining errors are mainly caused by smoothing at jump interfaces and by finite-dimensional approximation.
\end{Remark}

\section{Concluding remarks}
\label{sec:conclusion}

We studied an inverse initial data problem for a convection--diffusion equation with memory. 
The problem is ill-posed because final-time data contain only smoothed information about the initial state. 
We proved uniqueness in a spatially independent coefficient setting by using the Fourier transform and an analyticity argument for the associated scalar Volterra equation.

For the variable-coefficient case, we developed a Legendre-based spatial-dimensional reduction method combined with Tikhonov regularization. 
The terminal condition is imposed exactly, while the reduced integro-differential system is enforced in the least-squares sense with an $H^2$ penalty. 
For a fixed truncation order, we proved the convergence of the regularized minimizers to the finite-dimensional minimum-norm solution.

The numerical examples show that the method can recover several discontinuous initial profiles from noisy terminal data, including circular, elliptical, C-shaped, and square-annulus targets. 
The main geometric features are recovered well, and the largest errors occur near jump discontinuities, as expected. 
The tests also indicate that the method can be applied when the coefficient of the Laplacian term is not uniformly positive.

Several questions remain beyond the scope of the present paper. 
In particular, the theoretical analysis in this work is limited to the spatially independent-coefficient case, whereas the fully variable-coefficient case is investigated numerically. 
Extending the uniqueness and stability theory to the fully variable-coefficient setting is an important direction for future work.

\end{document}